\newtheorem{theorem}{Theorem}
\newtheorem{corollary}[theorem]{Corollary}
\newtheorem{criterion}[theorem]{Criterion}
\newtheorem{definition}[theorem]{Definition}
\newtheorem{example}[theorem]{Example}
\newtheorem{lemma}[theorem]{Lemma}
\newtheorem{proposition}[theorem]{Proposition}
\newtheorem{remark}[theorem]{Remark}
\newenvironment{proof}[1][Proof]{\noindent\textbf{#1.} }{\ \rule{0.5em}{0.5em}}
\newcommand{\Rea}{\mathop{\rm Re}\nolimits}
\begin{document}

\title{Growth Estimates for the Numerical Range of Holomorphic Mappings and
Applications}
\author{Filippo Bracci\footnote{Partially supported by the ERC grant ``HEVO -
Holomorphic Evolution Equations" no. 277691.}, Marina Levenshtein,
\\ Simeon Reich\footnote{Partially supported by the
 Israel Science Foundation (Grant No. 389/12), by the Fund for the
 Promotion of Research at the Technion and by the Technion General
 Research Fund} and David Shoikhet}

 \maketitle

{\sl AMS Mathematics Subject
Classifications (2010):} {47A12; 46G20; 46T25; 58B12}

\smallskip

{\sl Keywords:} {numerical range; growth estimates; Bloch radii; holomorphic maps; Banach spaces}

\begin{abstract}
The numerical range of holomorphic mappings arises in many aspects
of nonlinear analysis, finite and infinite dimensional holomorphy,
and complex dynamical systems. In particular, this notion plays a
crucial role in establishing exponential and product formulas for
semigroups of holomorphic mappings, the study of flow invariance
and range conditions, geometric function theory in finite and
infinite dimensional Banach spaces, and in the study of complete
and semi-complete vector fields and their applications to starlike
and spirallike mappings, and to Bloch (univalence) radii for
locally biholomorphic mappings.

In the present paper we establish lower and upper bounds for the
numerical range of holomorphic mappings in Banach spaces. In
addition, we study and discuss some geometric and quantitative analytic
aspects of fixed point theory, nonlinear resolvents of holomorphic
mappings, Bloch radii, as well as radii of starlikeness and spirallikeness.
\end{abstract}

\section{Introduction and preliminaries}

Let $X^{\ast}$ denote the dual of a complex Banach space $X$ and
let $\langle x,x^{\ast}\rangle$ denote the duality pairing of
$x\in X$ and $x^{\ast}\in X^{\ast}$. For each $x\in X$, the set
$J(x)$, defined by
\begin{equation*}
J(x):=\{x^{\ast}\in X^{\ast }:\text{ }\langle x,x^{\ast}\rangle
=\|x\|^{2}=\|x^{\ast}\|^{2}\},
\end{equation*}
is not empty by the Hahn--Banach theorem, and is a closed and
convex subset of $X^{\ast}$.


Let $D$ be a domain in $X$ and let $f:D\rightarrow X$ be a
mapping. We use the notation $\sup_{x\in D} \Rea\langle
f(x),x^{\ast} \rangle$ for the supremum of $\Rea\langle f(x),
x^{\ast}\rangle$ over all pairs $x\in D$ and $x^{\ast}\in J(x)$.

We denote by $\mathcal{B}_{R}:=\{x\in X: \|x\|<R\}$ the open ball
centered at the origin of radius $R$ in the complex Banach space
$X$.

\begin{definition}[cf. \protect\cite{H} and \cite{HRS}]
\label{def1} Let $h:\overline{\mathcal{B}_{R}}\to X$  be
continuous on the closure $\overline{\mathcal{B}_{R}}$ of
$\mathcal{B}_{R}$. We define the set
\begin{equation*}
V_{\mathcal{B}_{R}}(h):=\{\langle h(x),x^{\ast}\rangle: \;
\|x\|=R, \; x^{\ast}\in J(x) \}
\end{equation*}
and call it {\sl the numerical range} of $h$ with respect to
$\mathcal{B}_{R}$.
\end{definition}

The number $|V_{\mathcal{B}_{R}}(h)|:=\sup \{|\langle
h(x),x^{\ast}\rangle|: \; \|x\|=R, \; x^{\ast}\in J(x)\}$ is
called the \textit{numerical radius} of $h$ with respect to
$\mathcal{B}_{R}$.

We denote by $\mathrm{Hol} (\mathcal{D}, X)$ the set of all
holomorphic mappings from a domain $\mathcal{D}\subset X$ into
$X$.

\begin{definition} [cf. \protect\cite{H} and \protect\cite{HRS}]
\label{def2a} Let $h\in \mathop{\rm Hol}\nolimits\nolimits(\mathcal{B}%
_{R},X) $. We say that $h$ is (holomorphically) dissipative if
\begin{equation*}
\limsup_{s \rightarrow 1^{-}}\sup \Rea \,
V_{\mathcal{B}_{R}}(h_{s})\leq 0,
\end{equation*}%
where $h_{s}(x) := h(sx)$, $0\leq s<1$.
\end{definition}

In view of their numerous applications, dissipative mappings which
are not necessarily holomorphic constitute an important class of
mappings in complex Banach spaces. In this paper we introduce the
following more general notion.

\begin{definition}
\label{disdf} Given $\omega , \theta \in \mathbb{R}$, a mapping
$h:\mathcal{B}_{R}\mathcal{\rightarrow }X$ is called $\left(
\omega ,\theta \right) $-dissipative (or just quasi-dissipative)
on $\mathcal{B} _{R} $ if there exists $\varepsilon
>0$ such that
\begin{equation}
\mathrm{Re}\, \langle e^{i\theta }h(x),x^{\ast }\rangle \leq
\omega ,\label{3b}
\end{equation}
for all $x$ satisfying $R-\varepsilon <\Vert x\Vert <R$ and
$x^{\ast}\in J(x)$.
\end{definition}

For a holomorphic mapping $h$ on the unit ball
$\mathcal{B}=\mathcal{B}_{1}$ the above definition means that for
each $s\in \left( 1-\varepsilon ,1\right) $, the closed convex
hull of the numerical range of each $h_{s}$ is not the whole
complex plane, or which is one and the same, $h:\mathcal{B}\to X$
is \textsl{\ quasi-dissipative} if there is $\varepsilon >0$ such
that the closed convex hull of the set
\begin{equation*}
\Omega _{\varepsilon }(h):=\left\{ \langle h(x),x^{\ast }\rangle
:\ 1-\varepsilon <\Vert x\Vert <1, \; x^{*}\in J(x)\right\}
\end{equation*}%
is not the whole complex plane $\mathbb{C}$.

Obviously, a holomorphic $\left( 0,0\right) $-dissipative mapping
on the unit ball is holomorphically dissipative in the sense of
Definition \ref{def2a}. Also note that $\left( 0,\pi \right)
$-dissipative mappings are sometimes called holomorphically
accretive \cite{ERS2000}.


One of the general problems we intend to study is the following
one:

\textit{Given a quasi-dissipative mapping }$h$ \textit{\ on the
open unit ball }$\mathcal{B}$, \textit{find }$r\in \left(
0,1\right)$ \textit{\ (if it exists) such that }$h$
\textit{\ is dissipative on the ball }$\mathcal{B} _{r}$\textit{.}

Since every holomorphic mapping $h$ on a domain $\mathcal{D}$ is locally
Lipschitzian, it follows that the Cauchy problem
\begin{equation*}
\begin{cases}
\displaystyle\frac{dx(t)}{dt}=h(x(t)) \\
x(0)=x_{0}%
\end{cases}%
\end{equation*}%
has a unique continuous solution $x\left( t\right) $ defined on the interval
$\left[ 0,T\right] ,$ where $T$ depends on the initial value $x_{0}\in
\mathcal{D}$.

The mapping $-h$ is said to be a \textit{semi-complete vector field
}on $ \mathcal{D}$ if for each $x_{0}\in \mathcal{D}$, this
solution is well defined on the right half-axis $\left[ 0,\infty
\right) $ and the values of $x\left( t\right) $ belong to
$\mathcal{D}$ for each initial data $ x_{0}\in \mathcal{D}$. In
this situation, $h$ generates a one-parameter semigroup of
holomorphic self-mappings of $\mathcal{D}$ \cite{ReichS}.

Note that it may happen that $-h$ is not semi-complete on the whole
domain of definition $\mathcal{D}$, but it is semi-complete on
some open subset of $ \mathcal{D}$. In this case we say that $-h$
is a \textit{locally semi-complete vector field.}

It is known (see, for example, \cite{ReichS}) that if $h$ is
holomorphic on $\mathcal{B}_{R}$, then $-h$ is semi-complete on
$\mathcal{B}_{R}$ if and only if $h$ is dissipative on
$\mathcal{B}_{R}.$

Moreover, it turns out that if the numerical range of $h$ (say,
with respect to the open unit ball $\mathcal{B}$ of $X$) is not the
whole complex
plane, then for each $r\in \left( 0,1\right) $, there is a real number $%
\omega =\omega \left( r\right) $ such that the perturbed mapping
$\omega I - h$ is semi-complete on the ball $\mathcal{B}_{r}$. The
question is how this number $\omega $ depends on $r$ and how to
find the minimal value of the function $\omega \left( r\right) $
with respect to $r\in \left( 0,1\right) .$

By using the exponential formula for semigroups of holomorphic mappings
(see, for example, \cite{ReichS}) one can see that this problem is
equivalent to the following one.\textit{\ Find a function }$\omega \left(
r\right) $\textit{\ on the interval }$\left( 0,1\right) $\textit{\ such that
the nonlinear resolvent }$\left( \lambda I-h\right) ^{-1}$\textit{\ is well
defined on the ball }$\mathcal{B}_{\rho }$\textit{\ of radius }$\rho =\left( \lambda
-\omega \left( r\right) \right) r$\textit{\ for all }$\lambda \geq \omega
\left( r\right) $\textit{\ and maps this ball into }$\mathcal{B}_{r}.$ We will study
this problem in more detail in the third section of our paper.

In this case it is also of interest, in analogy with the linear
theory, to determine if the associated resolvent mapping
$(\lambda I-h)^{-1}\circ (\lambda -\omega (r))I$ can be extended to a sector
in the complex plane with vertex at $\omega (r)$ and to
estimate
the angle of its opening.

For a holomorphic mapping $h:\mathcal{B}\to X$, one says that it
has {\sl unit radius of boundedness} if it is bounded on each
subset strictly inside $\mathcal{B}$ (\cite{FV, H}; see also
\cite{B-E-S2014, B-K-S2014}).

It follows from a result of L. A. Harris \cite{H}\ that a
holomorphic mapping on $\mathcal{B}$ has unit radius of
boundedness if and only if its numerical radius $\left\vert
V_{\mathcal{D}}(h)\right\vert $ is bounded with respect to any
convex subset $\mathcal{D}$ in $\mathcal{B}$. Moreover, it was
shown in \cite{HRS} that this is equivalent to a formally weaker
condition, namely,
\begin{equation*}
\limsup_{r \rightarrow 1^{-}}\sup_{\left\Vert x\right\Vert =r}
\Rea \, \left\langle h(x),x^{\ast }\right\rangle <\infty .
\end{equation*}

The problem of verifying whether a holomorphic mapping has unit
radius of boundedness, as well as the general study of its
numerical ranges, arise in many aspects of infinite dimensional
holomorphy (see, for example, \cite{FV, H}) and complex dynamical
systems \cite{A-R-S, ReichS}. In particular, they play a crucial
role in establishing exponential and product formulas for
semigroups of holomorphic mappings \cite{RS-SD-96, RS-97}, the
study of flow invariance and range conditions in nonlinear
analysis \cite{HRS, Reich}, and geometric function theory in finite
and infinite dimensional Banach spaces \cite{ReichS}. They were
specifically used for the class of
semi-complete vector fields (or infinitesimal generators) in
their applications to the study of starlike and spirallike
mappings \cite{ReichS}, and Bloch (univalence) radii \cite{HRS}
for locally biholomorphic mappings. Other pertinent papers include
\cite{G06}, \cite{G14} and \cite{RSZ}.

Observe also that the concept of unit radius of boundedness for
holomorphic mappings is a specific phenomenon in the infinite
dimensional case because in a finite dimensional Banach space each
holomorphic mapping on the unit ball is bounded on each subset
strictly inside the ball. This is no longer true in the general
case. Relevant examples can be found in \cite{SBCh}.

\section{Lower and upper bounds for the numerical range}

Let $\mathcal{D}$ be a domain in $X,$ $0\in \mathcal{D}$ and let
$h: \mathcal{D}\rightarrow X$ be holomorphic. Since $h$ is locally
bounded, there is a ball $\mathcal{B}_{R}$ in $\mathcal{D}$ such
that
\begin{equation}
N_{R}:=\sup_{\left\Vert x\right\Vert <R}\Rea \, \left\langle
h(x),x^{\ast }\right\rangle <\infty .  \label{NR}
\end{equation}
Also, for each $r< R$, we use the quantity
\begin{equation}
N_{r}:=\sup_{\left\Vert x\right\Vert =r}\Rea \, \left\langle
h(x),x^{\ast }\right\rangle  .\label{NR1}
\end{equation}

The first aim of this section is the following one.

\textit{(i)} \textit{Find an explicit upper bound} $\mathcal{F}(r)$
\textit{for} $N_{r}$ \textit{which depends only on} $h(0)$,
$h'(0)$ \textit{and} $N_{R}$ \textit{\and such that}
\begin{equation}
\lim_{r\rightarrow R^{-}}\mathcal{F}\left( r\right) =N_{R}.  \label{GELIM}
\end{equation}

On the other hand, it might happen that for a given $R>0$, the
value of $N_{R}\left( :=\sup_{\left\Vert x\right\Vert <R}
\Rea \, \left\langle h(x),x^{\ast }\right\rangle \right) $ is not bounded,
while the convex hull of the numerical range of $h$ is not the
whole plane. This is equivalent to the fact that for some
real $\theta $,
\begin{equation}
N_{R}\left( \theta \right) :=\sup_{\left\Vert x\right\Vert <R}
\Rea \, \left\langle e^{i\theta }h(x),x^{\ast }\right\rangle <\infty
\text{ .} \label{4.1}
\end{equation}%
In other words, it may happen that even if $N_{R}\left( \theta
\right) $ is finite for some real $\theta$, the value $N_{R}\left(
0\right) =$ $N_{R}$ \ is not finite. A simple example is given in
$X=\mathbb{C}$, the complex plane, by the mapping
$h(x)=ix\displaystyle\frac{1+x}{1-x}$ with $R=1$ and $\theta
=\displaystyle\frac{\pi}{2}$. Nevertheless, for each $r<R$, the
value $N_{r}$ is finite on the smaller ball $\{x\in X: \|x\|\leq
r\} $.

\text{ }
Therefore the following problem is also relevant.

\textit{(ii)} \textit{Knowing the value}  $N_{R}(\theta)$,
\textit{find an explicit upper bound }$\mathcal{F}\left( r,\theta
\right) $\textit{\ for }$\sup_{\left\Vert x\right\Vert =r}\Rea \,
\left\langle h(x),x^{\ast }\right\rangle $, \textit{which depends
only on}  $h(0)$, $h'(0)$, $\theta$ and $N_{R}(\theta)$, \textit{\
and such that} $\mathcal{F}\left( r,0\right) =\mathcal{F}\left(
r\right)$.

Then, by definition, this function also gives us growth estimates
for the upper bound of the numerical range of $h$ with respect to
$ r\leq R$ .

Similarly, one can consider the problem of finding growth
estimates for the function
\begin{equation*}
M_{r}(\theta ) := \sup_{\left\Vert x\right\Vert =r}\Rea \, \left\langle
e^{i\theta }(h(x)-h(0)),x^{\ast }\right\rangle
\end{equation*}%
by using a suitable computable function $\Psi \left( r,\theta \right) $
such that
\begin{equation*}
M_{r}(\theta )\leq \Psi \left( r,\theta \right) \quad \mbox{and}\quad
\lim_{r\rightarrow R^{-}}\Psi \left( r,0\right) =M_{R}\left(
0\right) ,
\end{equation*}%
where
\begin{equation*}
M_{R}(\theta ):=\sup_{\left\Vert x\right\Vert <R}\Rea \, \left\langle
e^{i\theta }(h(x)-h(0)),x^{\ast }\right\rangle . \quad
\end{equation*}

Let us start solving  Problem (i):

\begin{proposition}
\label{propT} Let $h$ be a holomorphic mapping on
$\mathcal{B}_{R}$ and assume that
\begin{equation*}
N_{R}=\sup_{\left\Vert x\right\Vert <R}\Rea \, \left\langle
h(x),x^{\ast }\right\rangle <\infty \text{ .}
\end{equation*}%
Then the following estimate holds:
\begin{equation*}
N_{r}=\sup_{\left\Vert x\right\Vert =r}\Rea \, \left\langle
h(x),x^{\ast }\right\rangle \leq \mathcal{F}\left( r\right)
\text{,} 
\end{equation*}%
where
\begin{equation*}
\mathcal{F}\left( r\right) :=r\left\Vert h(0)\right\Vert \left( 1-\frac{r^{2}
}{R^{2}}\right) +\frac{r^{2}}{R+r}\left[ \left( R-r\right) L+2r\cdot \frac{
N_{R}}{R^{2}}\right]
\end{equation*}
and $L:=\sup_{\left\Vert u\right\Vert =1}\Rea \, \left\langle
h^{\prime
}(0)u,u^{\ast }\right\rangle \leq \frac{N_{R}}{R^{2}}.$
\end{proposition}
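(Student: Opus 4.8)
The plan is to reduce the statement to a one–variable estimate and then exploit the holomorphy through a Poisson–Harnack argument. First I would fix $x$ with $\|x\|=r$ and $x^{\ast}\in J(x)$, and normalize by setting $\xi:=x/r$ and $\xi^{\ast}:=x^{\ast}/r$, so that $\|\xi\|=\|\xi^{\ast}\|=1$ and $\xi^{\ast}\in J(\xi)$. Consider the scalar holomorphic function $F(\zeta):=\langle h(\zeta\xi),\xi^{\ast}\rangle$ on the disc $|\zeta|<R$; a direct check gives $\Rea\langle h(x),x^{\ast}\rangle=r\,\Rea F(r)$, so it suffices to bound $\Rea F(r)$. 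The crucial observation is that for $0<|\zeta|<R$ one has $\bar\zeta\,\xi^{\ast}\in J(\zeta\xi)$ (indeed $\langle\zeta\xi,\bar\zeta\xi^{\ast}\rangle=|\zeta|^{2}=\|\zeta\xi\|^{2}$ and $\|\bar\zeta\xi^{\ast}\|=|\zeta|$), whence $\Rea\bigl(\bar\zeta F(\zeta)\bigr)=\Rea\langle h(\zeta\xi),\bar\zeta\xi^{\ast}\rangle\le N_{R}$.

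The main obstacle is that the constraint $\Rea(\bar\zeta F(\zeta))\le N_{R}$ is a bound on a non-harmonic function. I would remove this difficulty by the factorization $F(\zeta)=a_{0}+\zeta Q(\zeta)$, where $a_{0}=F(0)=\langle h(0),\xi^{\ast}\rangle$ (so $|a_{0}|\le\|h(0)\|$) and $Q$ is holomorphic with $Q(0)=F'(0)=\langle h'(0)\xi,\xi^{\ast}\rangle$, hence $\Rea Q(0)\le L$. Because $\bar\zeta\cdot\zeta=|\zeta|^{2}$ is real, the constraint becomes $\Rea(\bar\zeta a_{0})+|\zeta|^{2}\Rea Q(\zeta)\le N_{R}$, i.e. on the circle $|\zeta|=\rho<R$ the \emph{harmonic} function $\Rea Q$ satisfies $\Rea Q(\zeta)\le\rho^{-2}\bigl(N_{R}-\Rea(\bar\zeta a_{0})\bigr)$. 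This is the step I expect to carry the whole argument: it converts the awkward bound on $\Rea(\bar\zeta F)$ into an honest upper bound on a harmonic function.

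Next I would let $M(\zeta)$ denote the Poisson (harmonic) extension to $|\zeta|<\rho$ of the boundary majorant $\rho^{-2}(N_{R}-\Rea(\bar\zeta a_{0}))$. Since the $\theta$-dependent part is a first-order harmonic, its extension is explicit, giving $M(0)=N_{R}/\rho^{2}$ and $M(r)=(N_{R}-r\,\Rea a_{0})/\rho^{2}$. The function $v:=M-\Rea Q$ is harmonic and nonnegative on the boundary, hence nonnegative inside by the minimum principle (note $v(0)=N_{R}/\rho^{2}-\Rea Q(0)\ge0$ already yields $L\le N_{R}/R^{2}$ after letting $\rho\to R^{-}$). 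Applying Harnack's inequality to $v$ at the point $\zeta=r$ gives $v(r)\ge\frac{\rho-r}{\rho+r}\,v(0)$, that is
\begin{equation*}
\Rea Q(r)\le\frac{N_{R}-r\,\Rea a_{0}}{\rho^{2}}-\frac{\rho-r}{\rho+r}\left(\frac{N_{R}}{\rho^{2}}-\Rea Q(0)\right).
\end{equation*}

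Finally I would assemble $r\,\Rea F(r)=r\,\Rea a_{0}+r^{2}\,\Rea Q(r)$, insert the previous inequality, and use $\Rea a_{0}\le\|h(0)\|$ and $\Rea Q(0)\le L$ (both coefficients being positive). Letting $\rho\to R^{-}$ and collecting terms, the $\Rea a_{0}$-contribution assembles into $r\|h(0)\|(1-r^{2}/R^{2})$, the $N_{R}$-contribution into $\tfrac{2r^{3}N_{R}}{R^{2}(R+r)}$, and the $L$-contribution into $\tfrac{r^{2}(R-r)}{R+r}L$; their sum is exactly $\mathcal{F}(r)$. Taking the supremum over all $x$ with $\|x\|=r$ and all $x^{\ast}\in J(x)$ then yields $N_{r}\le\mathcal{F}(r)$, and the side inequality $L\le N_{R}/R^{2}$ follows as noted above.
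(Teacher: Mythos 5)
Your proof is correct and follows essentially the same route as the paper: reduction to the scalar function $\zeta\mapsto\langle h(\zeta\xi),\xi^{\ast}\rangle$, the observation that $\Rea(\overline{\zeta}F(\zeta))\leq N_{R}$ because $\overline{\zeta}\xi^{\ast}\in J(\zeta\xi)$, the difference quotient $(F(\zeta)-F(0))/\zeta$, and a Harnack estimate --- which is precisely the engine behind the Hadamard--Borel--Carath\'{e}odory inequality \eqref{ref2} that the paper invokes. The only cosmetic difference is that the paper adds the corrective term $\overline{f(0)}\zeta/R^{2}$ to its auxiliary function so that the harmonic majorant becomes a constant, whereas you keep a non-constant first-harmonic majorant and apply Harnack to the difference; both assemble to the identical bound $\mathcal{F}(r)$ and the side inequality $L\leq N_{R}/R^{2}$.
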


\begin{proof}
Let $g: \Delta _{R}\rightarrow \mathbb{C}$ be a holomorphic
function in $\Delta _{R}:=\{z\in \mathbb{C}: |z|<R\}$ for some
$R>0$. By the Hadamard--Borel--Carath\'{e}odory inequality, for all $\zeta\in \Delta_R$ such that $|\zeta|=r\in (0,R)$ we have
\begin{equation}\label{ref2}
\Rea g(\zeta)\leq \frac{R-r}{R+r}\Rea
g(0)+\frac{2r}{R+r}\sup\limits_{|\xi|<R} \Rea g(\xi).
\end{equation}
Now, let $f:\Delta_R\to \mathbb C$ be a holomorphic function and consider the holomorphic function $g:\Delta_R\to \mathbb C$ defined by
\begin{equation}\label{gfun}
g(\zeta):= \begin{cases}
\frac{f(\zeta)-f(0)}{\zeta}+\frac{\overline{f(0)}}{R^{2}}\zeta, &\quad \zeta\neq 0\\
f'(0)&\quad \zeta=0.
\end{cases}
\end{equation}
Note that 
\begin{equation}\label{boundprime}
\begin{split}
\Rea f'(0)&=\Rea g(0)\leq \limsup_{r\to R^-} \max_{|\zeta|=r}\Rea g(\zeta)\\&=
\limsup_{r\to R^-} \max_{|\zeta|=r}\frac{1}{r^{2}}\left[ \Rea  f(\zeta )\overline{\zeta }-
\Rea  \left( f(0)\overline{\zeta }\left(
1-\frac{r^{2}}{R^{2}}\right) \right) \right]\\
&\leq \frac{1}{R^2} \sup_{|\zeta|<R}\Rea f(\zeta)\overline{\zeta}.
\end{split}
\end{equation}

Moreover, applying \eqref{ref2} to \eqref{gfun}, 
 for all $\zeta$ such that $|\zeta|=r\in (0,R)$, we obtain
\begin{equation}\label{ref1}
\begin{split}
\Rea (\overline{\zeta}f(\zeta))&\leq \left(\! 1-\frac{r^{2}}{R^{2}}\!
\right)\Rea (\overline{\zeta}f(0))\\&+r^{2}\! \left[\! \Rea
f'(0)\frac{R-r}{R+r}+\frac{2r}{R^{2}(R+r)}\sup\limits _{|\xi|<R}
\Rea \left(\overline{\xi}f(\xi)\right)\! \right]\! .
\end{split}
\end{equation}

Let now $u\in X$, $\|u\|=1$. Fix  $u^{*}\in J(u)$ and consider the
holomorphic function $f:\Delta _{R}\rightarrow \mathbb{C}
$ defined by $f(\zeta )=\left\langle h(\zeta u),u^{\ast }\right\rangle$. 
Applying \eqref{ref1} to this $f$ and taking into account \eqref{boundprime}, we can easily complete the proof of the  proposition.
\end{proof}

\begin{remark}
Note that inequality \eqref{ref1} is, in fact, equivalent to the Hadamard--Borel--Carath\'{e}odory inequality \eqref{ref2}. Simply apply \eqref{ref1} to the holomorphic function $f(\zeta)=\zeta g(\zeta)$.
\end{remark}

The solution to Problem (ii) is the content of the next result:

\begin{proposition}
\label{propN} Let $h$ be a holomorphic mapping on
$\mathcal{B}_{R}$ and assume that
\begin{equation*}
N_{R}\left( \theta \right) :=\sup_{\left\Vert x\right\Vert <R}\Rea \,
\left\langle e^{i\theta }h(x),x^{\ast }\right\rangle <\infty .
\end{equation*}%
Then
\begin{equation*}
\sup_{\left\Vert x\right\Vert =r}\Rea \, \left\langle h(x),x^{\ast
}\right\rangle \leq \mathcal{F}_{1}\left( r,\theta \right) ,
\end{equation*}%
where
\begin{equation*}
\begin{split}
\mathcal{F}_{1}\left( r,\theta \right) :=& \sup_{\left\Vert x\right\Vert =r}%
\Rea \, \left( \left\langle h(0),x^{\ast }\right\rangle -\frac{r^{2}}{R^{2}}%
\overline{\left\langle e^{i\theta }h(0),x^{\ast }\right\rangle }\right) \\
& +r^{2}\left[ L+ \mathcal{L}\left( \theta
,r\right)\left(\frac{1}{R^{2}}N_{R}\left( \theta \right)-l(\theta
)\right)  \right]
\end{split}%
\end{equation*}%
with 
\begin{equation}\label{b00} \mathcal{L} ( \theta
,r) =\frac{2r
( R-r \cos \theta) }{
R^{2}-r^{2}}, \quad 
l\left( \theta \right) := \inf_{\left\Vert u\right\Vert =1}\Rea \,
\left\langle e^{i\theta }h^{\prime }(0)u,u^{\ast }\right\rangle,
\end{equation}
and $L:=\sup_{\left\Vert u\right\Vert =1}\Rea \, \left\langle
h^{\prime
}(0)u,u^{\ast }\right\rangle \leq \frac{N_{R}}{R^{2}}.$ 
\end{proposition}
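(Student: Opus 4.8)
The plan is to reduce the estimate to a one–variable statement by slicing, exactly as in the proof of Proposition \ref{propT}, and then to run a \emph{rotated} version of the Borel--Carath\'eodory machinery that is sharp enough to produce the precise coefficient $\mathcal{L}(\theta,r)$. First I would fix $x$ with $\|x\|=r$ and $x^{\ast}\in J(x)$, set $u=x/r$ and $u^{\ast}=x^{\ast}/r$ (so $\|u\|=1$ and $u^{\ast}\in J(u)$), and put $f(\zeta):=\langle h(\zeta u),u^{\ast}\rangle$, a holomorphic function on $\Delta_{R}$. As in Proposition \ref{propT} one has $\Rea\langle h(x),x^{\ast}\rangle=\Rea(\overline{\zeta}f(\zeta))\big|_{\zeta=r}$, and testing the definition of $N_{R}(\theta)$ on the points $\xi u$ with functional $\overline{\xi}u^{\ast}$ gives $\sup_{|\xi|<R}\Rea\big(e^{i\theta}\overline{\xi}f(\xi)\big)\le N_{R}(\theta)$. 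Thus everything reduces to a one–dimensional growth estimate for $f$ in which the available control is on the \emph{rotated} quantity $e^{i\theta}\overline{\xi}f(\xi)$.

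The key idea is to apply the construction \eqref{gfun} not to $f$ but to $\phi:=e^{i\theta}f$. Setting $g_{\phi}(\zeta):=\frac{\phi(\zeta)-\phi(0)}{\zeta}+\frac{\overline{\phi(0)}}{R^{2}}\zeta$, the subharmonicity/limit argument that yields the last inequality in \eqref{boundprime} now gives $\sup_{|\xi|<R}\Rea g_{\phi}(\xi)\le \frac{1}{R^{2}}N_{R}(\theta)$, while $g_{\phi}(0)=e^{i\theta}f'(0)$. Hence $\Psi:=g_{\phi}-g_{\phi}(0)$ is holomorphic on $\Delta_{R}$, vanishes at the origin, and satisfies $\Rea\Psi\le A'$ with $A':=\frac{1}{R^{2}}N_{R}(\theta)-\Rea(e^{i\theta}f'(0))\ge0$. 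Composing the Cayley map $w\mapsto w/(2A'-w)$, which sends $\{\Rea w<A'\}$ onto the unit disk and fixes $0$, with $\Psi$ and invoking the Schwarz lemma shows that for $|\zeta|=r$ the value $\Psi(\zeta)$ lies in the explicit disk centered at the real point $-\frac{2A's^{2}}{1-s^{2}}$ of radius $\frac{2A's}{1-s^{2}}$, where $s=r/R$. Evaluating the support function of this disk in the direction $e^{-i\theta}$ gives the sharp rotated bound $\Rea\big(e^{-i\theta}\Psi(\zeta)\big)\le A'\,\mathcal{L}(\theta,r)$, since $-\frac{2A's^{2}}{1-s^{2}}\cos\theta+\frac{2A's}{1-s^{2}}=A'\,\mathcal{L}(\theta,r)$.

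Finally I would reconstruct $f$ from $g_{\phi}$ via $f(\zeta)=f(0)+\zeta e^{-i\theta}g_{\phi}(\zeta)-\frac{e^{-2i\theta}\overline{f(0)}}{R^{2}}\zeta^{2}$, evaluate $\Rea(\overline{\zeta}f(\zeta))$ at $\zeta=r$, and insert the rotated bound for $\Rea(e^{-i\theta}\Psi)$ together with $\Rea(e^{-i\theta}g_{\phi}(0))=\Rea f'(0)$. This produces $r^{2}\big[\Rea f'(0)+\mathcal{L}(\theta,r)\big(\tfrac{1}{R^{2}}N_{R}(\theta)-\Rea(e^{i\theta}f'(0))\big)\big]$ plus a zeroth–order term depending only on $f(0)$. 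Taking the supremum over all admissible $u,u^{\ast}$, and using $\mathcal{L}(\theta,r)\ge0$ to replace $\Rea f'(0)$ by $L$ and $-\Rea(e^{i\theta}f'(0))$ by $-l(\theta)$, turns the $f'(0)$–part into the bracket of $\mathcal{F}_{1}$ and the $f(0)$–part into the displayed supremum.

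The main obstacle is the third step: the crude modulus estimate $\Rea(e^{-i\theta}\Psi)\le|\Psi|\le\frac{2r}{R-r}A'$ coming from the standard Schwarz--Pick/Borel--Carath\'eodory inequality is too lossy — it discards the factor $\frac{R-r\cos\theta}{R+r}$ — so one genuinely has to identify the image disk of $\Psi$ explicitly and compute its support function in the rotated direction $e^{-i\theta}$ in order to reach the exact coefficient $\mathcal{L}(\theta,r)$. A secondary delicate point is the bookkeeping of the $f(0)$–dependent zeroth–order term: reconstruction through $g_{\phi}$ naturally delivers it with the rotation $e^{-2i\theta}\overline{f(0)}$, so reconciling it with the stated form $\sup_{\|x\|=r}\Rea\big(\langle h(0),x^{\ast}\rangle-\tfrac{r^{2}}{R^{2}}\overline{\langle e^{i\theta}h(0),x^{\ast}\rangle}\big)$ is a direct but attention–demanding computation that must be carried out carefully.
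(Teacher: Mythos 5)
Your proposal reproduces the paper's architecture: the same slicing $f(\zeta)=\langle h(\zeta u),u^{\ast}\rangle$, the same auxiliary function \eqref{gfun} applied to the rotated function $\phi=e^{i\theta}f$, a sharp rotated real-part estimate at radius $r$, and reconstruction of $\overline{\zeta}f(\zeta)$. The genuine difference is the middle step: the paper simply quotes the Kresin--Maz'ya inequality \eqref{mazja} from \cite{K-M} and applies it to the auxiliary function, whereas you re-prove exactly the needed instance by composing with the Cayley map $w\mapsto w/(2A'-w)$, invoking the Schwarz lemma, and computing the support function of the image disk in the direction $e^{-i\theta}$. Your computation is correct: the image of $|\zeta|\le r$ is the disk with center $-2A's^{2}/(1-s^{2})$ and radius $2A's/(1-s^{2})$, $s=r/R$, whose support value in direction $e^{-i\theta}$ is $2A's(1-s\cos\theta)/(1-s^{2})=A'\,\mathcal{L}(\theta,r)$. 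This makes the proof self-contained, and you also make explicit the substitution ($f\mapsto e^{i\theta}f$, $\theta\mapsto-\theta$, using $\mathcal{L}(-\theta,r)=\mathcal{L}(\theta,r)$) that the paper's phrase ``applying \eqref{mazyac} to such an $f$'' leaves implicit; applied literally, \eqref{mazyac} bounds the rotated value by the unrotated supremum, which is the wrong direction for this proposition.

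The flaw is your final sentence: the ``reconciliation'' of the zeroth-order term cannot be carried out, because the two expressions are genuinely different. Your derivation (correctly) yields $\Rea\bigl(\langle h(0),x^{\ast}\rangle-\frac{r^{2}}{R^{2}}\,e^{-i\theta}\,\overline{\langle e^{i\theta}h(0),x^{\ast}\rangle}\bigr)$, equivalently $\Rea\bigl(\langle h(0),x^{\ast}\rangle-\frac{r^{2}}{R^{2}}\,\overline{\langle e^{2i\theta}h(0),x^{\ast}\rangle}\bigr)$, whereas the printed $\mathcal{F}_{1}$ has no $e^{-i\theta}$ factor. Taking the supremum over $\|x\|=r$, $x^{\ast}\in J(x)$ (the set of values $\langle h(0),x^{\ast}\rangle$ is rotation-invariant with maximal modulus $r\|h(0)\|$, attained at $x=rh(0)/\|h(0)\|$), your term equals $r\|h(0)\|\,\bigl|1-\frac{r^{2}}{R^{2}}e^{2i\theta}\bigr|$ while the printed one equals $r\|h(0)\|\,\bigl|1-\frac{r^{2}}{R^{2}}e^{i\theta}\bigr|$; these differ whenever $h(0)\neq0$ and $\cos 2\theta\neq\cos\theta$, and for $\theta\in(0,2\pi/3)$ your bound is strictly the larger one, so no further estimate can bridge the difference. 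The resolution is that the discrepancy lies in the paper, not in your argument: carrying out the paper's own recipe (\eqref{mazyac} with $f\mapsto e^{i\theta}f$, $\theta\mapsto-\theta$) produces exactly your rotated term, so the printed statement of $\mathcal{F}_{1}$ evidently drops a factor $e^{-i\theta}$. You should therefore assert the conclusion with $e^{-i\theta}\overline{\langle e^{i\theta}h(0),x^{\ast}\rangle}$ (noting that it coincides with the printed form when $\theta=0$ or $h(0)=0$, which covers all uses of this proposition in the paper), rather than defer a bookkeeping step that would in fact fail.
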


\begin{proof}
Let $g:\Delta_R\to \mathbb C$ be a holomorphic function. By the Kresin--Maz'ya inequalities (see \cite{K-M}), for all $\zeta\in \Delta_R$ with $|\zeta|=r<R$, we have
\begin{equation}\label{mazja}
 \Rea  \left(
e^{i\theta }\left( g(\zeta )-g(0)
\right) \right) \\  \leq \mathcal{L}( \theta ,r)
[ \sup_{\left\vert \xi \right\vert <R}\Rea g(\xi )-\Rea g(0)].
\end{equation}
Now, given a holomorphic function $f:\Delta_R\to \mathbb C$, let $g$ be the holomorphic function  defined in \eqref{gfun}. By   \eqref{boundprime}, 
\[
\limsup_{r\to R^-} \max_{|\zeta|=r}\Rea g(\zeta)\leq \frac{1}{R^2} \sup_{|\zeta|<R}\Rea f(\zeta)\overline{\zeta}.
\]
Hence,  applying \eqref{mazja} to the function $g$,   for all $\zeta$ such that $|\zeta|=r\in (0,R)$, we obtain
\begin{equation}\label{mazyac}
\begin{split}
\Rea  e^{i\theta }&  f(\zeta)\overline{\zeta} \leq \Rea \, e^{i\theta }\left(
f(0)
-\frac{1}{R^{2}}\overline{f(0)
}r^{2}\right) \\ & +r^{2}\left[ \Rea \left(  f^{\prime
}(0) \left( e^{i\theta
}-\mathcal{L}( \theta ,r\right)) \right)
+\frac{1}{R^{2}}\mathcal{L}\left( \theta ,r\right) \sup_{|\xi|<R}\Rea f(\xi)\overline{\xi}\right].
\end{split}%
\end{equation}%
Let now $u\in X$, $\|u\|=1$. Fix  $u^{*}\in J(u)$ and consider the
holomorphic function $f:\Delta _{R}\rightarrow \mathbb{C}
$ defined by $f(\zeta )=\left\langle h(\zeta u),u^{\ast }\right\rangle$. 
Applying \eqref{mazyac} to such an $f$ and taking into account \eqref{boundprime},  we can easily finish the proof of this proposition.
\end{proof}

\begin{remark}
The proofs of Propositions \ref{propT} and \ref{propN} we gave in the original manu\-script were more involved and did not rely immediately on the Hadamard--Borel--Cara\-th\'{e}odory--Kresin--Maz'ya inequalities. We thank one of the anonymous referees for suggesting the shorter proofs contained here.
\end{remark}

\begin{remark}\label{lem}
Let $h$ be a holomorphic mapping on
$\mathcal{B}_{R}$ and assume that $N_R<+\infty$. Suppose that $h(0)=0$. From the proof of Proposition \ref{propN}, using \eqref{mazyac} with $\theta=0$, it follows that for all $x\in \mathcal B_R$ and $x^\ast\in J(x)$,
\begin{equation*}
\Rea \langle h(x), x^\ast\rangle \leq \Rea \left[ \langle h'(0)x, x^\ast\rangle (1-\mathcal L (0, r))\right]+\frac{\|x\|^2}{R^2} \mathcal L(0,r) N_R.
\end{equation*}
\end{remark}

 As we have already mentioned, sometimes it is more
convenient to study growth estimates for the function
\begin{equation*}
M_{r}(\theta )=\sup_{\left\Vert x\right\Vert =r}\Rea \,
\left\langle e^{i\theta }(h(x)-h(0)),x^{\ast }\right\rangle
\end{equation*}
with respect to $r\in \left( 0,R\right) .$ Of course, this can be
done by using Proposition \ref{propN}. However, in the same spirit of the previous proof, but using a different auxiliary function $f$, one can obtain slightly better estimates. As a matter of notation, if $h:\mathcal{B}_{R}\to X$ is holomorphic, $r\in (0,R)$ and $\theta\in \mathbb R$, let
\[
L(\theta ) := \sup_{\left\Vert u\right\Vert =1}\Rea \, \left\langle
e^{i\theta }h^{\prime }(0)u,u^{\ast }\right\rangle,
\]
and
\begin{equation*}
m_{r}(\theta ):=\inf_{\left\Vert x\right\Vert =r}\Rea \, \left\langle
e^{i\theta }\left( h(x)-h\left( 0\right) \right) ,x^{\ast
}\right\rangle.
\end{equation*}%
Moreover, $M_R(\theta):=\sup_{\left\Vert x\right\Vert <R}\Rea \,
\left\langle e^{i\theta }(h(x)-h(0)),x^{\ast }\right\rangle$, and similarly we define $m_R(\theta)$. The symbols $\mathcal{L} (\theta,r )$ and $l(\theta)$ were introduced in \eqref{b00}. With these notations at hand we can state and prove the following result:

\begin{proposition}
\label{prop1} Let $h$ be a holomorphic mapping on the ball $\mathcal{B}_{R}$
in $X.$ Given $\theta \in\mathbb{R}$, $R>0$ and $r\in (0,R]$, we have
\begin{equation}
m_{r}(\theta )\leq r^{2}l(\theta )\leq r^{2}L(\theta )\leq M_{r}(\theta).
\label{4.60}
\end{equation}
Moreover, for all $x\in \mathcal{B}_{R}$ such that $\left\Vert x\right\Vert =r<R$ and $x^{\ast}\in J(x)$ we have
\begin{equation}  \label{4.6a}
\begin{split}
r^{2}&\left( l(0)+\mathcal{L} (\theta,r )\left( \frac{m_{R}(\theta
)}{R^{2}} -L(\theta )\right) \right) \leq \Rea \, \left\langle
\left( h(x)-h(0)\right) ,x^{\ast }\right\rangle \\ & \leq
r^{2}\left[ \mathcal{L} (\theta,r )\left( \frac{M_{R}(\theta
)}{R^{2}} -l(\theta )\right) +L(0)\right].
\end{split}%
\end{equation}
\end{proposition}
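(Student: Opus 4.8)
The plan is to reduce both estimates to one complex variable by slicing along complex lines through the origin, and then to feed the resulting scalar functions into the Kresin--Maz'ya inequality \eqref{mazja}. Fix $x$ with $\|x\|=r<R$ and $x^\ast\in J(x)$, write $x=rv$ with $\|v\|=1$, and put $v^\ast:=x^\ast/r$; a direct check shows $v^\ast\in J(v)$. I introduce the scalar holomorphic function $f(\zeta):=\langle h(\zeta v)-h(0),v^\ast\rangle$ on $\Delta_R$ and its quotient $G(\zeta):=f(\zeta)/\zeta$, so that $G(0)=f'(0)=\langle h'(0)v,v^\ast\rangle$. Two identities will be used throughout: the \emph{diagonal} identity $\Rea\langle h(x)-h(0),x^\ast\rangle=r^2\,\Rea G(r)$, and the \emph{radial} identity, valid for $|\xi|=\rho<R$,
\begin{equation*}
\Rea\big(e^{i\theta}G(\xi)\big)=\frac{1}{\rho^2}\,\Rea\langle e^{i\theta}(h(\xi v)-h(0)),\overline{\xi}\,v^\ast\rangle,\qquad \overline{\xi}\,v^\ast\in J(\xi v).
\end{equation*}
Both follow from $1/\xi=\overline{\xi}/\rho^2$ and $\overline{\xi}\langle z,v^\ast\rangle=\langle z,\overline{\xi}\,v^\ast\rangle$.

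Granting these, \eqref{4.60} is a consequence of the extremum principle for harmonic functions. The function $\Rea(e^{i\theta}G)$ is harmonic on $\Delta_R$, so for every admissible $v$ one has $\Rea(e^{i\theta}G(0))\le\max_{|\zeta|=r}\Rea(e^{i\theta}G(\zeta))$; rewriting the right-hand side through the radial identity bounds it by $r^{-2}M_r(\theta)$. Taking the supremum over $v$ yields $L(\theta)\le r^{-2}M_r(\theta)$, i.e. $r^2L(\theta)\le M_r(\theta)$. The minimum principle gives the twin bound $m_r(\theta)\le r^2 l(\theta)$, and $l(\theta)\le L(\theta)$ is immediate.

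For \eqref{4.6a} the crucial device is to run \eqref{mazja} backwards in the angle. Applying \eqref{mazja} to the holomorphic function $e^{i\theta}G$ with $-\theta$ in place of $\theta$, and using the evenness $\mathcal{L}(-\theta,r)=\mathcal{L}(\theta,r)$ visible from \eqref{b00}, I will obtain
\begin{equation*}
\Rea\big(G(\zeta)-G(0)\big)\le \mathcal{L}(\theta,r)\Big[\sup_{|\xi|<R}\Rea\big(e^{i\theta}G(\xi)\big)-\Rea\big(e^{i\theta}G(0)\big)\Big].
\end{equation*}
The supremum on the right is then controlled by an $e^{i\theta}$--analogue of \eqref{boundprime}: harmonicity gives $\sup_{|\xi|<R}\Rea(e^{i\theta}G(\xi))=\lim_{\rho\to R^-}\max_{|\xi|=\rho}\Rea(e^{i\theta}G(\xi))$, the radial identity bounds each circle maximum by $\rho^{-2}M_R(\theta)$, and letting $\rho\to R^-$ yields $\sup_{|\xi|<R}\Rea(e^{i\theta}G(\xi))\le M_R(\theta)/R^2$. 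Since $\Rea(e^{i\theta}G(0))\ge l(\theta)$ and $\Rea G(0)\le L(0)$, evaluation at $\zeta=r$ followed by multiplication by $r^2$ (legitimate because $\mathcal{L}(\theta,r)>0$ for $0<r<R$) produces the upper estimate in \eqref{4.6a} via the diagonal identity. Replacing $G$ by $-G$ throughout turns the supremum into an infimum bounded below by $m_R(\theta)/R^2$ and interchanges the roles of $l$ and $L$, and this yields the lower estimate.

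The one genuinely delicate point is precisely this angle swap. Inequality \eqref{mazja} is designed to estimate the rotated increment $\Rea(e^{i\theta}(g(\zeta)-g(0)))$ by the un-rotated supremum $\sup\Rea g$, whereas \eqref{4.6a} demands the opposite: the un-rotated quantity $\Rea\langle h(x)-h(0),x^\ast\rangle$ must be controlled by the rotated datum $M_R(\theta)$. The observation that applying \eqref{mazja} to $e^{i\theta}G$ at angle $-\theta$, together with $\mathcal{L}(-\theta,r)=\mathcal{L}(\theta,r)$, carries out exactly this exchange is the heart of the argument; everything else --- the verifications that $v^\ast\in J(v)$ and $\overline{\xi}v^\ast\in J(\xi v)$, the two identities, and the limit $\rho\to R^-$ --- is routine bookkeeping.
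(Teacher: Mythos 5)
Your proof is correct and follows essentially the same route as the paper's: the paper's auxiliary function $g$ in \eqref{gfun2} is exactly your $e^{i\theta}G$, and the paper likewise obtains \eqref{4.6a} by applying the Kresin--Maz'ya bounds \eqref{mazja} and \eqref{mazjainf} to it with $-\theta$ in place of $\theta$ (using $\mathcal L(-\theta,r)=\mathcal L(\theta,r)$), while \eqref{4.60} comes from the same maximum-principle argument at $\zeta=0$. The only differences are cosmetic (you build the subtraction of $h(0)$ into $f$ and phrase the lower bound as replacing $G$ by $-G$ rather than invoking \eqref{mazjainf} directly).
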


\begin{proof}
The proof is a slight modification of the proof of Proposition \ref{propN}, plus the lower bound estimates given by the Kresin--Maz'ya inequalities. 

If $f:\Delta_R\to \mathbb C$ is a holomorphic function, we consider the auxiliary holomorphic function $g: \Delta_R\to \mathbb C$ defined by 
\begin{equation}\label{gfun2}
g(\zeta):= \begin{cases}
e^{i\theta}\frac{f(\zeta)-f(0)}{\zeta} &\quad \zeta\neq 0,\\
e^{i\theta}f'(0)&\quad \zeta=0.
\end{cases}
\end{equation}
Applying \eqref{mazja} with $-\theta$ instead of $\theta$ to the function $g$, and taking into account that $\mathcal L(\theta,r)=\mathcal L(-\theta, r)$, 
 for $\zeta\in \Delta_R$, $|\zeta| =r<R$, we obtain
\begin{equation}
\begin{split}
\Rea[ f(\zeta)-f(0)]\overline{\zeta} \leq & r^{2}\bigl(%
\mathcal{L} ( \theta,r ) \frac{1}{R^{2}}\sup_{|\xi| <R}\Rea \left(e^{i\theta}[f(\zeta)-f(0)]\overline{\zeta} \right)\\ &  +\Rea  \left[ (e^{-i\theta
}-\mathcal{L} \left( \theta,r \right) )e^{i\theta
}f'(0) \right] \bigr) .
\label{b02}
\end{split}%
\end{equation}
Let now $u\in X$, $\|u\|=1$. Fix  $u^{*}\in J(u)$ and consider the
holomorphic function $f:\Delta _{R}\rightarrow \mathbb{C}
$ defined by $f(\zeta )=\left\langle h(\zeta u),u^{\ast }\right\rangle$.  Applying \eqref{b02} to $f$, we obtain
\begin{equation}
M_{r}(0)\leq r^{2}\left[ \mathcal{L} (\theta,r )\left( \frac{M_{R}(\theta )}{%
R^{2}}-l(\theta )\right) +L(0)\right], 
\end{equation}
which gives the upper bound in \eqref{4.6a}.

Note that for each $\theta \in \mathbb{R}$ and $x\in
\mathcal{B}_{R}$ such that $\left\Vert x\right\Vert =r\leq R$,
$x=\zeta u$, $\left\vert \zeta \right\vert =r$, $x^{\ast}\in
J(x)$, it follows from the maximum principle for harmonic
functions
that%
\begin{equation*}
\begin{split}
\Rea \, \left\langle e^{i\theta }h^{\prime }(0)u,u^{\ast
}\right\rangle &=\Rea e^{i\theta}f'(0)=
\Rea \, g(0)\leq \max_{\left\vert \zeta \right\vert =r}\Rea \, g(\zeta ) \\ &\leq \frac{1}{r^{2}}\sup_{\left\Vert
x\right\Vert =r}\Rea \, \left\langle e^{i\theta }\left[
h(x)-h(0)\right] ,x^{\ast }\right\rangle =\frac{1}{r^{2}}
M_{r}(\theta ),
\end{split}%
\end{equation*}
which implies that, for each $r\in \left( 0,R\right) $,
\begin{equation*}
l(\theta ) \leq L(\theta ) \leq \frac{M_{r}(\theta )}{r^{2}},
\end{equation*}
giving the upper estimates in \eqref{4.60}.

In order to get the lower estimates, let us recall the  Kresin--Maz'ya lower bound (see \cite{K-M}). Let $g:\Delta_R\to \mathbb C$ be a holomorphic function. Then, given $\theta\in \mathbb R$, for all $\zeta\in \Delta_R$ with $|\zeta|=r<R$ we have
\begin{equation}\label{mazjainf}
 \Rea  \left(
e^{i\theta }\left( g(\zeta )-g(0)
\right) \right) \\  \geq \mathcal{L}( \theta ,r)
[ \inf_{\left\vert \xi \right\vert <R}\Rea g(\xi )-\Rea g(0)].
\end{equation}
Then, one can argue exactly as before, just replacing   \eqref{mazja} with  \eqref{mazjainf}. \end{proof}

\begin{corollary}[Rigidity property]
\label{cor(rigidity)} Let $h:\mathcal{B} _{R}\rightarrow X$ be holomorphic
and assume that for some $\theta \in \mathbb{R}$, one of the following
equalities holds:
\begin{equation*}
M_{R}(\theta )=R^{2}l\left( \theta \right)
\end{equation*}%
or%
\begin{equation*}
m_{R}(\theta )=R^{2}L\left( \theta \right) .
\end{equation*}%
Then the second equality holds too and $h$ is, in fact, an affine mapping: $%
h\left( x\right) =h^{\prime }\left( 0\right) x+h\left( 0\right) .$ In
particular, if $h\left( 0\right) =0,$ then $h$ is a linear operator, that is, $%
h\left( x\right) =h^{\prime }\left( 0\right) x$.
\end{corollary}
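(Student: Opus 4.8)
The plan is to reduce to a single hypothesis, extract from Proposition \ref{prop1} that $h'(0)$ has constant ``numerical range'' in the direction $e^{i\theta}$, upgrade this to affineness of every one‑dimensional slice by a strong maximum principle, and finally pass from the slices to $h$ itself. First I would record the symmetries $m_R(\theta)=-M_R(\theta+\pi)$ and $l(\theta)=-L(\theta+\pi)$, which are immediate from $e^{i(\theta+\pi)}=-e^{i\theta}$ and $\inf(-a)=-\sup(a)$. Consequently $m_R(\theta)=R^2L(\theta)$ is equivalent to $M_R(\theta+\pi)=R^2l(\theta+\pi)$, so it suffices to treat the first hypothesis $M_R(\theta)=R^2l(\theta)$; the statement for the second follows by replacing $\theta$ with $\theta+\pi$.

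Next, from \eqref{4.60} we have $r^2L(\theta)\le M_r(\theta)\le M_R(\theta)$ for every $r<R$; letting $r\to R^-$ gives $R^2L(\theta)\le M_R(\theta)=R^2l(\theta)$, and since always $l(\theta)\le L(\theta)$ this forces $l(\theta)=L(\theta)=:c$. Now fix a unit vector $u$ and $u^\ast\in J(u)$, set $f(\zeta)=\langle h(\zeta u),u^\ast\rangle$, and let $G$ be the associated slice function from \eqref{gfun2}, $G(\zeta)=e^{i\theta}(f(\zeta)-f(0))/\zeta$ with $G(0)=e^{i\theta}f'(0)$. As in the proof of Proposition \ref{prop1}, $\Rea\langle e^{i\theta}(h(x)-h(0)),x^\ast\rangle=|\zeta|^2\Rea G(\zeta)$ for $x=\zeta u$ and the corresponding $x^\ast\in J(x)$; hence $M_R(\theta)$ is the supremum over all slices of $\sup_{|\zeta|<R}|\zeta|^2\Rea G(\zeta)$. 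Letting $\zeta\to0$ shows $M_R(\theta)\ge0$, so $c\ge0$, and for the fixed slice we obtain $|\zeta|^2\Rea G(\zeta)\le R^2c$, i.e. $\Rea G(\zeta)\le R^2c/|\zeta|^2\to c$ as $|\zeta|\to R$. Moreover $\Rea G(0)=\Rea\langle e^{i\theta}h'(0)u,u^\ast\rangle\in[l(\theta),L(\theta)]=\{c\}$.

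Thus $v:=\Rea G$ is harmonic on $\Delta_R$, bounded above, with $v(0)=c$ and $\limsup_{|\zeta|\to R}v(\zeta)\le c$. The maximum principle yields $v\le c$ throughout $\Delta_R$, and since this maximal value is attained at the interior point $0$, the strong maximum principle forces $v\equiv c$. As $\Rea G$ is constant, $G$ is constant, so the slice is affine: $\langle h(\zeta u),u^\ast\rangle=\langle h(0)+\zeta h'(0)u,u^\ast\rangle$ for all $\zeta$. Writing $\Psi(x):=h(x)-h(0)-h'(0)x$ (so $\Psi(0)=0$ and $\Psi'(0)=0$) and rescaling via $\zeta=\|x\|$, $u=x/\|x\|$, this says $\langle\Psi(x),x^\ast\rangle=0$ for every $x$ and every $x^\ast\in J(x)$.

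The main obstacle is the last step: passing from this vanishing of the numerical range to $\Psi\equiv0$, since $\langle\Psi(x),x^\ast\rangle=0$ for $x^\ast\in J(x)$ does \emph{not} force $\Psi(x)=0$ pointwise in a general Banach space (the face $J(x)$ may be too small). I would resolve it globally. Since $\Rea\langle\Psi(x),x^\ast\rangle=0$, along any integral curve of $\dot x=\Psi(x)$ we have $\frac{d}{dt}\|x(t)\|^2=2\Rea\langle\Psi(x),x^\ast\rangle=0$, so the flow preserves norms. Hence every trajectory starting in $\mathcal{B}_R$ remains on a fixed sphere, bounded away from $\partial\mathcal{B}_R$, and therefore extends to all $t\in\mathbb{R}$; this produces a one‑parameter group $\{F_t\}_{t\in\mathbb{R}}$ of holomorphic self‑maps of $\mathcal{B}_R$ fixing the origin, with $F_t'(0)=\exp(t\Psi'(0))=I$. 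By Cartan's uniqueness theorem $F_t\equiv\mathrm{id}$, so $\Psi=\frac{d}{dt}F_t\big|_{t=0}\equiv0$ and $h(x)=h'(0)x+h(0)$ is affine. Finally, once $h$ is affine with $l(\theta)=L(\theta)=c$, a direct substitution gives $m_r(\theta)=r^2c=M_r(\theta)$ for all $r$, whence $m_R(\theta)=R^2c=R^2L(\theta)$; thus the second equality holds as well, and if $h(0)=0$ then $h(x)=h'(0)x$ is linear.
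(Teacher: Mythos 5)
Your proof is correct in substance and its core (reduce to one hypothesis, force $l(\theta)=L(\theta)$ from \eqref{4.60}, then run a strong maximum principle on the slice function to make each slice affine) parallels the paper's argument; the paper reaches $\Rea g\leq \Rea g(0)$ via the upper bound in \eqref{4.6a} rather than via your boundary-$\limsup$ estimate $v(\zeta)\leq R^{2}c/|\zeta|^{2}$, but these are two ways of setting up the same maximum-principle step. Your explicit reduction of the second hypothesis to the first through $\theta\mapsto\theta+\pi$ is a clean observation the paper leaves at ``similarly.'' Where you genuinely diverge is the final step, which you correctly single out as the crux: the paper disposes of $\langle \Psi(x),x^{\ast}\rangle=0$ for all $x^{\ast}\in J(x)$ by citing Proposition~1 of Harris \cite{H}, which bounds $\sup\Vert \Psi\Vert$ on interior balls by the numerical radius of $\Psi$, so zero numerical radius gives $\Psi\equiv 0$ at once; you instead integrate the vector field $\Psi$, observe that the flow preserves norms, and invoke Cartan's uniqueness theorem. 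Your route is self-contained modulo two points that deserve more care. First, in a general Banach space $\Vert x(t)\Vert$ need not be differentiable; the correct statement is that its one-sided derivatives equal $\max$ (resp.\ $\min$) of $\Rea\langle \Psi(x),x^{\ast}\rangle/\Vert x\Vert$ over $x^{\ast}\in J(x)$ --- since here both vanish, norm preservation still follows. Second, ``bounded away from $\partial\mathcal{B}_{R}$, therefore extends to all $t$'' is not automatic in infinite dimensions, because a holomorphic $\Psi$ need not be bounded on interior balls (this is exactly the unit-radius-of-boundedness issue the paper discusses); the gap is closed by noting that both $\Psi$ and $-\Psi$ are dissipative, hence both semi-complete by the criterion quoted in Section~3, so the flow is indeed a global group. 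With those repairs your argument stands; what it buys is independence from Harris's quantitative norm-versus-numerical-radius inequality, at the cost of importing the dynamical machinery (semi-completeness and Cartan's theorem).
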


\begin{proof}
If the first equality holds, then $l\left( \theta\right) =L\left(
\theta\right)$ by (\ref{4.60}), and hence by (\ref{4.6a}),
$l\left( 0\right) =L\left( 0\right) ,$ whence by (\ref{4.6a}),
\begin{equation*}
\Rea \, \left\langle \left( h(x)-h(0)\right) ,x^{\ast
}\right\rangle \leq r^{2}l\left( 0\right)=\inf_{\left\Vert
u\right\Vert =1}\Rea \, \left\langle h^{\prime }(0)u,u^{\ast
}\right\rangle ,
\end{equation*}%
where $\|x\|=r<R$.

Since $\langle h'(0)u,u^{\ast}\rangle =g(0)$, where, as above,
$g(\zeta)=\frac{1}{\zeta}\langle h(\zeta u)-h(0), u^{\ast}
\rangle$, this implies that $$\Rea g(\zeta)\leq \Rea g(0), \quad
|\zeta |<R.$$ Hence, by the maximum principle for harmonic
functions, $\Rea g(\zeta)=\Rea g(0)$ in $\Delta _{R}$, which means
that $$\langle h(x)-h(0)-h'(0)x,x^{\ast}\rangle =0$$ for all $x\in
$ $\mathcal{B}_{R}$ and $x^{\ast}\in J(x)$.

Now it follows from Proposition 1 in \cite{H} that $h\left( x\right) =h\left(
0\right) +h^{\prime }\left( 0\right) x$ and we are done.

Similarly, one can get the same conclusion if the second equality
holds.
\end{proof}

\begin{remark}
Let $h: \mathcal B_R\to X$ be holomorphic. Let $M_{R}=M_{R}(0)$ and $u\in X$, $\|u\|=1$. Fix  $u^{*}\in J(u)$ and consider the
holomorphic function $f:\Delta _{R}\rightarrow \mathbb{C}
$ defined by $f(\zeta )=\left\langle h(\zeta u),u^{\ast }\right\rangle$. Equation \eqref{ref2} applied to the function $g$ defined in \eqref{gfun} implies that
\begin{equation}
r^{2}L\leq M_{r}\leq \frac{R-r}{R+r}L+\frac{2r}{R+r}M_{R}.  \label{M12}
\end{equation}%
Also, by the same token and using the classical Littlewood
two-sided estimates (see \cite{K-M}), one can establish another lower bound for
$M_{r}.$ Namely,
\begin{equation*}
r^{2}p\left( r\right) \leq M_{r},
\end{equation*}%
where
\begin{equation*}
p\left( r\right) =\frac{R+r}{R-r}\cdot L-\frac{2r}{R-r}\cdot \frac{1}{R^{2}}%
M_{R}.
\end{equation*}%
However, the left-hand side inequality in (\ref{M12}) is better. Indeed,
this inequality says that for all $r\in (0,R],$
$ L\leq \frac{M_{r}}{r^{2}}$,
and, in particular,%
\begin{equation*}
L\leq \frac{M_{R}}{R^{2}},
\end{equation*}%
which, in turn, implies that for $r\in (0,R)$,
\begin{equation*}
p\left( r\right) \leq \frac{R+r}{R-r}\cdot L-\frac{2r}{R-r}\cdot L=L.
\end{equation*}
\end{remark}

An immediate consequence of Proposition \ref{propT} is the following
growth estimate for the numerical radius which, in its turn implies, by Proposition 1 in
\cite{H},  an estimate for the growth of the norm of $h$.

\begin{corollary}
\label{num.rad} Let $h:\mathcal{B}_{R}\to X$ be a holomorphic
mapping on $\mathcal{B}_{R}$ with $h\left( 0\right) =0$ and assume
that $N_{R}\left( h\right) :=\sup_{\left\Vert x\right\Vert <R}\Rea \,
\left\langle h(x),x^{\ast }\right\rangle $ is finite. Then for
each $r\in \left( 0,R\right) $, the values $\left\vert
V_{r}(h)\right\vert =\sup_{\left\Vert x\right\Vert <r}\left\vert
\left\langle h(x),x^{\ast }\right\rangle \right\vert $ and
$W_{r}\left( h\right) =\sup_{\left\Vert x\right\Vert \leq
r}\left\Vert h(x)\right\Vert $ are finite. Moreover,
\begin{equation*}
\left\vert V_{r}(h)\right\vert \leq \frac{r^2}{R+r}\left[(R-r)L+\frac{2rN_R}{R^2} \right],
\end{equation*}%
and
\begin{equation*}
\left| W_{r}( h)\right| \leq \frac{2R^{2}}{\left( R-r\right)
^{2}}\left\vert V_{R}(h)\right\vert.
\end{equation*}%
\end{corollary}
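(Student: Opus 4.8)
The plan is to restrict $h$ to complex lines, reducing both estimates to one-variable statements that feed into Proposition~\ref{propT} and into Harris's comparison of numerical radius with norm. Fix $x$ with $\|x\|=\rho\le r$ and $x^{*}\in J(x)$; writing $u=x/\rho$, $u^{*}=x^{*}/\rho$ one checks $\|u\|=1$ and $u^{*}\in J(u)$, so with $f(\zeta):=\langle h(\zeta u),u^{*}\rangle$ and $g:=f/\zeta$ (the function \eqref{gfun}, which is holomorphic on $\Delta_{R}$ because $h(0)=0$ gives $f(0)=0$) one obtains the identity $\langle h(x),x^{*}\rangle=\overline{\zeta}f(\zeta)=\rho^{2}g(\zeta)$ at $\zeta=\rho$. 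Every pair $(x,x^{*})$ arises this way, so bounding the numerical range on $\|x\|=\rho$ is the same as bounding $g$ on $|\zeta|=\rho$.

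I would first dispose of finiteness. From the identity, $\Rea g(\zeta)=\rho^{-2}\Rea\langle h(x),x^{*}\rangle\le N_{R}/\rho^{2}$, and since $\Rea g$ is harmonic, the monotonicity of its circular maxima (precisely the estimate \eqref{boundprime}) gives $\sup_{|\zeta|<R}\Rea g\le N_{R}/R^{2}<\infty$. A holomorphic function with real part bounded above and with $g(0)=\langle h'(0)u,u^{*}\rangle$ finite is bounded on each $\overline{\Delta_{r}}$ by the Borel--Carath\'eodory inequality, and the bound depends on $u,u^{*}$ only through $|g(0)|\le\|h'(0)\|$ and through $N_{R}$; hence it is uniform and $|V_{r}(h)|<\infty$. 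Finiteness of $W_{r}(h)$ then follows from the result of Harris quoted in the introduction, namely that a one-sided bound on $\Rea\langle h(x),x^{*}\rangle$ forces unit radius of boundedness.

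For the numerical-radius estimate I would pass from real parts to moduli by rotation. Writing $\langle h(x),x^{*}\rangle=|\langle h(x),x^{*}\rangle|e^{i\psi}$ gives $|\langle h(x),x^{*}\rangle|=\Rea\langle e^{-i\psi}h(x),x^{*}\rangle$, and since $e^{-i\psi}h$ again vanishes at $0$, Proposition~\ref{propT} applied to it bounds the right-hand side by the Proposition~\ref{propT} expression with the rotated constants; the monotonicity of that expression in $\rho$ (which holds because $L\le N_{R}/R^{2}$) then produces the stated bound at $\rho=r$. Equivalently, one applies the real-part inequality \eqref{ref2} to $e^{i\theta}g$ and takes the supremum over $\theta$. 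I expect this to be the main obstacle, because the modulus genuinely involves the rotated quantities $L(\theta)$ and $N_{R}(\theta)$ rather than their values at $\theta=0$; one must therefore either use that $N_{R}(\theta)$ is finite for the relevant rotation or argue that the supremum over $\theta$ reduces to the asserted form in $L$ and $N_{R}$.

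Finally, for the norm bound I would invoke Proposition~1 of \cite{H}. Harris's comparison states, on the unit ball, $\|h(x)\|\le 2(1-\|x\|)^{-2}|V(h)|$, obtained from Cauchy estimates applied to the one-variable restrictions $\langle h(\zeta v),w^{*}\rangle$; rescaling from the unit ball to $\mathcal{B}_{R}$ turns $2(1-\|x\|)^{-2}$ into $2R^{2}(R-r)^{-2}$ on $\overline{\mathcal{B}_{r}}$, which is exactly the asserted constant. With the scalar reduction and Harris's lemma in hand, the finiteness claims and this last comparison are routine, so the crux remains the real-part-to-modulus passage.
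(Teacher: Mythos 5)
Your route is the paper's intended route: the paper gives no actual proof of this corollary, merely the sentence before it declaring it an ``immediate consequence'' of Proposition \ref{propT} together with Proposition 1 of \cite{H}, and your ingredients (the scalar reduction $g=f/\zeta$, the bound $\sup_{\Delta_R}\Rea g\le N_R/R^2$ via \eqref{boundprime}, Borel--Carath\'eodory with the uniform data $|g(0)|\le\|h'(0)\|$, then Harris for $W_r$) are exactly those. Your proofs of the two finiteness claims are correct and already more than the paper supplies.

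The obstacle you flagged at the real-part-to-modulus passage, however, is not a technicality you failed to settle: it cannot be settled, because the first displayed inequality is false as printed. Take $X=\mathbb{C}$ and $h(x)=ix$ on $\mathcal{B}_R$. Then $h(0)=0$, $N_R=\sup_{|x|<R}\Rea\left(i|x|^2\right)=0<\infty$, and $L=\Rea i=0$, so the right-hand side of the display vanishes, while $|V_r(h)|=\sup_{|x|<r}\bigl|i|x|^2\bigr|=r^2>0$. This confirms your diagnosis precisely: the rotation trick produces a bound in terms of $N_R(-\psi)$ and $L(-\psi)$, and finiteness of $N_R(0)$ neither controls these quantities (a rotation of the paper's own example $ix\frac{1+x}{1-x}$ has $N_R(0)$ finite but $N_R(\theta)=\infty$ for some $\theta$) nor, even when they are all finite as in the example above, lets you collapse them back to $L$ and $N_R$. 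What Proposition \ref{propT} actually yields is this bound for the real part $N_r$, not for the modulus; indeed the right-hand side of the display is just $\mathcal{F}(r)$ with the $h(0)$-term deleted. Any true modulus bound must involve data invisible to $L$ and $N_R$, at minimum the numerical radius $v(h'(0)):=\sup_{\|u\|=1}|\langle h'(0)u,u^*\rangle|$, since $|V_r(h)|/r^2\to v(h'(0))$ as $r\to 0$ while in the example $v(h'(0))=1$ and $L=N_R=0$. Your own Borel--Carath\'eodory argument gives such a correct bound, namely
\begin{equation*}
|V_r(h)|\le r^2\left[v(h'(0))+\frac{2r}{R-r}\left(\frac{N_R}{R^2}-l(0)\right)\right],
\qquad l(0):=\inf_{\|u\|=1}\Rea\langle h'(0)u,u^*\rangle ,
\end{equation*}
and this, not the printed display, is what a repaired statement should record; it suffices for everything the corollary is used for afterwards (finiteness and unit radius of boundedness).

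One smaller caution on the norm estimate: when rescaling Harris's unit-ball inequality $\|h(y)\|\le 2(1-\|y\|)^{-2}|V(h)|$ to $\mathcal{B}_R$ you must also rescale the numerical radius, since for $\tilde h(y):=h(Ry)$ one has $|V_{\mathcal{B}}(\tilde h)|=\frac{1}{R}|V_R(h)|$; the correctly rescaled constant is therefore $\frac{2R}{(R-r)^2}$, not $\frac{2R^2}{(R-r)^2}$. The two agree at $R=1$, and for $R\ge 1$ the printed (weaker) constant remains valid, but your derivation reproduces the paper's constant only by omitting this factor --- a slip the paper appears to share. None of this affects the finiteness of $W_r(h)$.
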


In particular, if the closed convex hull of the numerical range of a
holomorphic mapping on the unit ball is not the whole complex plane, then it
has unit radius of boundedness.

\section{Nonlinear resolvents of holomorphic mappings and semi-complete
vector fields}

We start this section with the following notions.

\begin{definition}
\label{def1.2} Let $\mathcal{D}$ be a domain in $X$,
$\mathcal{D}\ni 0$, and let $h:\mathcal{D}\to X$ be holomorphic.
We define the \textbf{resolvent set} $\rho(h) \subseteq \mathbb{C}$ of $h$ to be
the set of those complex numbers $\lambda \in \mathbb{C}$ for which
there is an open set $\mathcal{D} _{\lambda }\subseteq
\mathcal{D}$, $\mathcal{D}_{\lambda }\ni 0$, such that $ \lambda
I-h$ is holomorphically invertible on $\mathcal{D}_{\lambda }$.
The complement $\sigma (h)$ of $\rho (h)$ is called the
\textbf{spectrum} of $h$.
\end{definition}

In other words, the spectrum $\sigma (h)$ of $h$ consists of those
$\lambda \in \mathbb{C} $ such that it is not possible to find an
open subset $\mathcal{D} _{\lambda }$ and a neighborhood
$V_{\lambda }\subseteq (\lambda I-h)\mathcal{ D}_{\lambda }$,
$V_{\lambda }\ni -h(0)$, such that $(\lambda I-h)^{-1}$
is a well-defined holomorphic mapping on $V_{\lambda }$ with values in
$\mathcal{ D}_{\lambda }$.

\begin{remark}
\label{rem.A} It was shown by L. A. Harris that $\sigma (h)=\sigma
(h^{\prime }(0))$ and, respectively, $\rho (h)=\rho (h^{\prime
}(0))$ (see \cite{H}).
\end{remark}

We set $\Re (\lambda ,h):= (\lambda I-h)^{-1}$ whenever it exists on
an open domain $V_{\lambda }\, (\ni -h(0))$, and $V_{\lambda }$ is
called the domain of the resolvent $\Re (\lambda ,h)$.

We will see below that the properties of the resolvent set of a
holomorphic mapping as well as the domain of definition of its resolvent  can be described in terms of the numerical range of the given mapping.

As we have already mentioned in Section 1, the problem of finding a
domain $ V_{\lambda }$ for the existence of the resolvent $\Re
(\lambda ,h)$ is related to the problem of local and global
descriptions of semi-complete vector fields. This observation is
based on the following fact (see, for example, \cite{ReichS}).

\begin{criterion}
Let $\mathcal{D}$ be a bounded and convex domain in a complex Banach
space $X$, and let $h:\mathcal{D}\to X$ be a holomorphic mapping
on $\mathcal{D}$. Then for some real number $\mu $, the mapping
$\mu I -h$ is a semi-complete vector field on $\mathcal{D}$ if and
only if the equation
\begin{equation*}
(\lambda I-h)(x)=(\lambda -\mu )y
\end{equation*}%
has a unique solution $x=\Re (\lambda ,h)\circ ((\lambda -\mu
)I)\left( y\right) $ for each $y\in \mathcal{D}$.
\end{criterion}
We call the mapping $\Phi _{\lambda }:=\Re (\lambda ,h)\circ
((\lambda -\mu )I): \mathcal{D\rightarrow D}$
the {\it associated resolvent mapping} of $h-\mu I.$

As above, let $h$ be a holomorphic mapping on the ball
$\mathcal{B}_{R}$ and assume that
\begin{equation*}
N_{R}=\sup_{\left\Vert x\right\Vert <R}\Rea \, \left\langle
h(x),x^{\ast }\right\rangle <\infty \text{ .}
\end{equation*}

Consider the resolvent equation
\begin{equation}
\lambda x-h(x)=z\text{, } \quad z\in X .  \label{4.12}
\end{equation}

For a fixed $r\in \left( 0,R\right) $, we would like to find conditions
which ensure that (\ref{4.12}) has a unique solution $x=x(z)\in \mathcal{B}%
_{r}$. To this end, we define the mapping $G:\mathcal{B}_{r}\to X$
by the formula%
\begin{equation*}
G(x):=z-\lambda x+h(x)\text{.}
\end{equation*}%
Then for every $x\in \partial \mathcal{B}_{r}$ and $x^{\ast}\in
J(x)$, $\Rea \left\langle G(x),x^{\ast }\right\rangle \leq
\left\Vert z\right\Vert r-r^{2}\Rea \lambda +r\omega (r)$, where,
by Proposition \ref{propT},
\begin{equation}
\omega \left( r\right) =\frac{1}{r}\mathcal{F}\left( r\right) =\left\Vert
h(0)\right\Vert \left( 1-\frac{r^{2}}{R^{2}}\right) +\frac{r}{R+r}\left[
\left( R-r\right) L+2r\cdot \frac{N_{R}}{R^{2}}\right]  \label{W}
\end{equation}%
with $L=\sup_{\left\Vert u\right\Vert =1}\Rea \left\langle
h^{\prime }(0)u,u^{\ast }\right\rangle .$

Hence, if
\begin{equation}
\left\Vert z\right\Vert +\omega (r)<r\Rea \lambda, \text{ }
\label{4.13}
\end{equation}
then we obtain the inequality
\begin{equation*}
\sup_{\left\Vert x\right\Vert =r}\Rea \left\langle G(x),x^{\ast
}\right\rangle <0 ,
\end{equation*}%
which implies the existence and the uniqueness of solutions to
(\ref{4.12}) (see \cite{HRS}).

Let now $\mu=\mu (r)$, $r\in [0,R]$, be a finite real-valued function and
assume that $\lambda \in \mathbb{R}$, $\lambda >\mu (r)$.

Solving (\ref{4.13}) with $z=(\lambda -\mu )y,$ where $y\in \mathcal{B}_{r},$
we get the condition
\begin{equation}
\lambda >\max \{\eta (r),\mu (r)\}\text{ ,}  \label{4.14}
\end{equation}%
where
\begin{equation}
\eta (r)=\frac{\omega (r)-\mu (r)\left\Vert y\right\Vert }{r-\left\Vert
y\right\Vert }.  \label{4.15}
\end{equation}
Note that $\mu (r)\geq \eta (r)$ if and only if $\mu (r)\cdot r\geq \omega
(r)$. In particular, if $\mu (r)\cdot r=\omega (r)$, then
\begin{equation*}
\mu (r)=\eta (r).
\end{equation*}
Finally, one can use classical calculus to investigate the function $\mu
(r)=\frac{1}{r}\omega (r)$.

Denote $b:=\displaystyle\frac{N_{R}}{R^{2}}$, $c:=\Vert h(0)\Vert $, and
consider the real-valued function
\begin{equation}
\mu (r)=\frac{R-r}{R+r}L+\frac{2r}{R+r}b+\frac{c}{r}\left( 1-\frac{r^{2}}{%
R^{2}}\right) ,\quad r\in (0,R).
\end{equation}%
If $c=0$, then $\mu (r)$ has no strict minimum in $(0,R)$. Indeed, in the case $c=0$ and
$b=L$, $\mu $ is the constant function $\mu (r)=L$,  and if $c=0$ and $%
b\neq L$, the derivative $\mu ^{\prime }(r)=\frac{2R(b-L)}{(R+r)^{2}}>0$, so
$\mu (r)$ is strictly increasing with $\mu (0)=L$ and $\mu (R)=b$.

Otherwise, if $c\neq 0,$ then the derivative
\begin{equation*}
\mu ^{\prime }(r)\!=\!\frac{2R(b-L)}{(R+r)^{2}}\!-\!c\frac{r^{2}+R^{2}}{%
R^{2}r^{2}}\!=\!-\!\frac{c\left( R^{2}\!+\!r^{2}\right) ^{2}\!+\!2crR\left(
R^{2}+r^{2}\right) -2R(b-L)(Rr)^{2}}{(R+r)^{2}R^{2}r^{2}}
\end{equation*}%
equals zero if and only if $R^{2}+r^{2}=\beta Rr$, where $\beta =%
\displaystyle\frac{\sqrt{c^{2}+2R(b-L)c}-c}{c}$.

\text{}

If $\beta \leq 2$, then the
equation $R^{2}+r^{2}=\beta Rr$ has no solution in $(0,R)$. If $\beta >2$
or, which is one and the same, $R(b-L)>4c$,
then it has the unique solution $r_{\ast }=R%
\displaystyle\frac{\beta -\sqrt{\beta ^{2}-4}}{2}$ in $(0,R)$ and since $\mu
^{\prime }(R)>0$, $\mu $ has a minimum at $r_{\ast }$.

Thus we have arrived at the following result.

\begin{proposition}
\label{teor4.1} Let $h:\mathcal{B}_{R}\to X$ be holomorphic in $\mathcal{%
B}_{R}$ and let
\begin{equation*}
N_{R}=\sup_{\left\Vert x\right\Vert =R}\Rea  \left\langle
h(x),x^{\ast }\right\rangle <\infty .
\end{equation*}
The following assertions hold:

(i) If $\omega (r)$ is given by (\ref{W}), then for each $z\in X$ and all $%
\lambda \in \mathbb{C}$ such that
\begin{equation*}
\Rea \lambda >\frac{1}{r}\left[ \omega (r)+\left\Vert z\right\Vert %
\right],
\end{equation*}%
the equation
\begin{equation*}
(\lambda I-h)(x)=z
\end{equation*}%
has a unique solution $x=\Re (\lambda ,h)\left( z\right) \in
\mathcal{B}_{r}$.

(ii) In particular, if we define $\mu \, (=\mu (r))=\frac{1}{r} \omega (r)$,
then for each $\lambda >\mu $, the equation
\begin{equation*}
(\lambda I-h)(x)=(\lambda -\mu )y
\end{equation*}%
has a unique solution $x=x(y)\in \mathcal{B}_{r}$ whenever $y\in \mathcal{B}
_{r}$.

(iii) If $h\left( 0\right) =0$, then the function $\mu =\mu (r)$
has no strict minimum in the interval $\left( 0,R\right)$. In
particular, it is either the constant function $ \mu (r)=L$ if $N_{R}=R^{2}L$ or
a strictly increasing function otherwise.

(iv) If $h\left( 0\right) \neq 0,$ then the function $\mu =\mu
(r)$ has a minimum at the point $r_{\ast
}=R\displaystyle\frac{\beta -\sqrt{\beta ^{2}-4 }}{2}$ in $(0,R),$
where $\beta =\displaystyle\frac{\sqrt{c^{2}+2R(b-L)c}-c}{ c},$ if
and only if
\begin{equation}
R\left(\frac{N_{R}}{R^{2}}-L\right)>4\left\Vert h(0)\right\Vert .
\label{Suf}
\end{equation}
\end{proposition}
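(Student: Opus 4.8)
The plan is to split the four assertions into two groups. Parts~(i)--(ii) are analytic and rest on the numerical-range bound of Proposition~\ref{propT} together with a boundary (flow-invariance) criterion, while parts~(iii)--(iv) are a calculus analysis of the explicit one-variable function $\mu(r)=\tfrac1r\omega(r)$. For~(i), I would introduce the auxiliary holomorphic mapping $G(x):=z-\lambda x+h(x)$ on $\mathcal{B}_{r}$ and estimate $\Rea\langle G(x),x^{\ast}\rangle$ on the sphere $\|x\|=r$. Since $\langle x,x^{\ast}\rangle=\|x\|^{2}=r^{2}$ and $|\langle z,x^{\ast}\rangle|\le\|z\|\,r$ for $x^{\ast}\in J(x)$, and since Proposition~\ref{propT} gives $\Rea\langle h(x),x^{\ast}\rangle\le\mathcal{F}(r)=r\,\omega(r)$, I obtain
\[
\Rea\langle G(x),x^{\ast}\rangle\le \|z\|\,r-r^{2}\Rea\lambda+r\,\omega(r).
\]
The hypothesis $\Rea\lambda>\tfrac1r[\omega(r)+\|z\|]$ forces the right-hand side to be strictly negative at every boundary point, and I would invoke the boundary-condition result of \cite{HRS} to conclude that $G$ has a unique zero in $\mathcal{B}_{r}$; that zero is the desired $x=\Re(\lambda,h)(z)$.

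For~(ii), I would specialize $z=(\lambda-\mu)y$ with $\mu=\mu(r)=\tfrac1r\omega(r)$ and $y\in\mathcal{B}_{r}$, so that $\|z\|=(\lambda-\mu)\|y\|$. Substituting into the admissibility condition of~(i), the threshold is $\lambda>\eta(r):=\dfrac{\omega(r)-\mu(r)\|y\|}{r-\|y\|}$; because the defining identity $\mu(r)\,r=\omega(r)$ makes $\eta(r)=\mu(r)$ exactly, the condition collapses to the single requirement $\lambda>\mu$, which then yields the unique solution $x(y)\in\mathcal{B}_{r}$.

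For~(iii)--(iv) the task is purely to analyze
\[
\mu(r)=\frac{R-r}{R+r}L+\frac{2r}{R+r}b+\frac{c}{r}\left(1-\frac{r^{2}}{R^{2}}\right),
\]
with $b=N_{R}/R^{2}\ge L$ (the inequality being part of Proposition~\ref{propT}) and $c=\|h(0)\|$. When $c=0$ I would compute $\mu'(r)=\dfrac{2R(b-L)}{(R+r)^{2}}$, which is identically zero if $b=L$ (so $\mu\equiv L$, attained when $N_{R}=R^{2}L$) and otherwise strictly positive, using $b\ge L$ to get $b>L$; hence $\mu$ is strictly increasing and has no interior strict minimum. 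When $c\neq0$ I would differentiate, clear denominators, and reduce $\mu'(r)=0$ to $R^{2}+r^{2}=\beta R r$, i.e.\ $\tfrac{r}{R}+\tfrac{R}{r}=\beta$; since $t+1/t>2$ for $t\in(0,1)$, a root in $(0,R)$ exists precisely when $\beta>2$, and I would verify (using $c>0$) that $\beta>2$ is equivalent to $R(b-L)>4c$, i.e.\ to \eqref{Suf}. Solving the quadratic $t^{2}-\beta t+1=0$ for the root in $(0,1)$ produces $r_{\ast}=R\tfrac{\beta-\sqrt{\beta^{2}-4}}{2}$. The main obstacle will be confirming that $r_{\ast}$ is genuinely a \emph{minimum} rather than a bare critical point: I would settle this by checking $\mu'(R)=\dfrac{R(b-L)-4c}{2R^{2}}>0$ under \eqref{Suf} and noting that $\mu(r)\to+\infty$ as $r\to0^{+}$, so $\mu'$ must cross from negative to positive through its unique zero $r_{\ast}$.
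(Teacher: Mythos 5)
Your proposal is correct and follows essentially the same route as the paper: the auxiliary mapping $G(x)=z-\lambda x+h(x)$ with the boundary estimate from Proposition~\ref{propT} and the existence--uniqueness criterion of \cite{HRS} for (i)--(ii), and the identical calculus reduction of $\mu'(r)=0$ to $R^{2}+r^{2}=\beta Rr$ for (iii)--(iv). Your extra care in confirming that $r_{\ast}$ is a genuine minimum (via $\mu(r)\to+\infty$ as $r\to0^{+}$ together with $\mu'(R)>0$) only makes explicit what the paper states tersely.
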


We recall that a mapping $-h:\mathcal{B}_{R}\to X$ is said to be
\textit{a locally semi-complete vector field }if there is $r\in
\left( 0,R\right) $ such that $-h$ is semi-complete on
$\mathcal{B}_{r}.$ It follows from the above criterion  that $-h$ is
a locally semi-complete vector field whenever the function $\mu
(r)$ vanishes or is negative in the interval $\left( 0,R\right).$

The simplest situation occurs when $h\left( 0\right) =0.$ Note that
a necessary condition for $\mu (r)$ to vanish at some
point of the interval $\left( 0,R\right)$ is that $L<0.$

Thus we get the following conclusion.

\begin{corollary}
\label{semi} Let $h:\mathcal{B}_{R}\to X$ be holomorphic in $\mathcal{B}%
_{R}$ with $h\left( 0\right) =0,$ and let
\begin{equation*}
N_{R}=\sup_{\left\Vert x\right\Vert <R}\Rea \left\langle
h(x),x^{\ast }\right\rangle <\infty .
\end{equation*}%
Then $-h$ is a locally semi-complete vector field if and only if
the following condition holds:
\begin{equation*}
L=\sup_{\left\Vert u\right\Vert =1}\Rea \left\langle h^{\prime
}(0)u,u^{\ast }\right\rangle <\min \left\{
0,\frac{N_{R}}{R^{2}}\right\} .
\end{equation*}
In this case $-h$ is semi-complete on each $\mathcal{B}_{r}$ with
$r\in \left( 0,\displaystyle\frac{-R^{3}L}{2N_{R}-LR^{2}}\right)
$.
\end{corollary}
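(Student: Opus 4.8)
The plan is to reduce the whole statement to a sign analysis of the single scalar function $\mu(r)$ introduced just before Proposition \ref{teor4.1}, specialized to the hypothesis $h(0)=0$. First I would note that with $h(0)=0$ the constant $c=\|h(0)\|$ vanishes, so
\[
\mu(r)=\frac{R-r}{R+r}\,L+\frac{2r}{R+r}\,b,\qquad b:=\frac{N_R}{R^2},
\]
and I would record two elementary facts. Evaluating $\Rea\langle h(x),x^\ast\rangle$ at $x=0$ gives $N_R\ge 0$, so $b\ge 0$ and $\min\{0,b\}=0$; thus the asserted hypothesis $L<\min\{0,N_R/R^2\}$ is nothing but $L<0$. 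Also $L\le b$ always, so by Proposition \ref{teor4.1}(iii) the map $\mu$ is either the constant $L$ (when $L=b$) or strictly increases from $\mu(0^+)=L$ to $\mu(R^-)=b$ (when $L<b$). In both cases $\mu(r)<0$ for some $r\in(0,R)$ precisely when $L<0$, and a direct computation shows $\mu(r)<0\iff r<r_0:=-R^3L/(2N_R-LR^2)$, with $0<r_0\le R$ (the bound $r_0\le R$ being equivalent to $N_R\ge 0$).

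For the sufficiency direction I would assume $L<0$ and fix $r\in(0,r_0)$, so that $\mu(r)<0$. Proposition \ref{propT} then yields $N_r\le\mathcal F(r)=r^2\mu(r)<0$, i.e. $\Rea\langle h(x),x^\ast\rangle<0$ on $\|x\|=r$. Hence $h$ is dissipative on $\mathcal B_r$, and by the equivalence recalled in the introduction ($-h$ is semi-complete on a ball if and only if $h$ is dissipative there) $-h$ is semi-complete on $\mathcal B_r$. This simultaneously gives local semi-completeness and the fact that semi-completeness holds on every $\mathcal B_r$ with $r\in(0,r_0)$, which is exactly the asserted radius. (The same conclusion could be routed through Proposition \ref{teor4.1}(ii) and the Criterion, since $\mu(r)I-h$ is semi-complete and $\mu(r)\le 0$.)

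For necessity I would suppose $-h$ is semi-complete on some $\mathcal B_r$, $r\in(0,R)$. Then $h$ is dissipative there, so $N_\rho\le 0$ for all $\rho<r$; feeding this into the lower bound $\rho^2L\le M_\rho(0)=N_\rho$ furnished by Proposition \ref{prop1} (inequality \eqref{4.60} with $\theta=0$ and $h(0)=0$) gives $\rho^2L\le 0$ and therefore $L\le 0$.

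The hard part will be upgrading $L\le 0$ to the strict inequality $L<0$. In the borderline case $L=0$ the preceding estimates force $N_\rho=0$ on a ball, hence $\Rea\langle h(x),x^\ast\rangle\equiv 0$; applying the maximum principle to $g(\zeta)=\langle h(\zeta u),u^\ast\rangle/\zeta$ (equivalently, the rigidity of Corollary \ref{cor(rigidity)}) then forces $h(x)=h'(0)x$ with $\Rea\langle h'(0)u,u^\ast\rangle\equiv 0$, a skew linear generator of a one-parameter group of isometric automorphisms. Separating this neutral (group) situation from a genuinely locally semi-complete field is exactly what converts $L\le 0$ into $L<0$ and closes the equivalence; handling this boundary case cleanly is the delicate point of the proof.
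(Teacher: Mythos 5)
Your reduction to the sign of $\mu(r)$ with $c=\Vert h(0)\Vert=0$, the computation $\mu(r)<0\iff r<-R^{3}L/(2N_{R}-LR^{2})$, and the sufficiency argument (Proposition \ref{propT} gives $N_{\rho}\le\rho^{2}\mu(\rho)<0$ for all $\rho$ up to that radius, hence $h$ is dissipative and $-h$ semi-complete on $\mathcal{B}_{r}$) are exactly the paper's route: the corollary is extracted from the discussion of $\mu$ preceding Proposition \ref{teor4.1}, with no separate proof given. Your derivation of $L\le 0$ in the necessity direction (dissipativity forces $N_{\rho}\le 0$, and $\rho^{2}L\le M_{\rho}(0)=N_{\rho}$ by (\ref{4.60})) is also correct.

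The genuine gap is the last step, and your proposed repair cannot work. First, the rigidity you invoke is not available: Corollary \ref{cor(rigidity)} requires $M_{R}(\theta)=R^{2}l(\theta)$ or $m_{R}(\theta)=R^{2}L(\theta)$, conditions involving the infimum $l$, whereas the borderline case only gives $M_{\rho}(0)=\rho^{2}L(0)$, an equality of two suprema which does not force $\Rea g_{u}$ to be constant for each fixed $u$. Second, and decisively, even when $h$ \emph{is} a skew linear map with $\Rea\langle h'(0)u,u^{\ast}\rangle\equiv 0$, the flow it generates is a group of isometries of the ball, hence in particular a semigroup: by the paper's definitions $-h$ is then semi-complete on every $\mathcal{B}_{r}$, so the ``neutral'' case cannot be separated out. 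Concretely, $h(x)=ix$ on the unit disc of $\mathbb{C}$ has $h(0)=0$, $N_{R}=0$ and $L=0$, yet $-h$ is locally (indeed globally) semi-complete; so the strict inequality $L<\min\{0,N_{R}/R^{2}\}$ is not necessary, and the ``only if'' fails as literally stated. The paper glosses over exactly this point: its sole justification of necessity is the one-line claim that $\mu$ can vanish in $(0,R)$ only if $L<0$, which is false when $L=N_{R}=0$ (then $\mu\equiv 0$). You have therefore correctly isolated the delicate point, but it is a defect of the statement rather than a missing idea: what the argument actually yields is that $-h$ is locally semi-complete if $L<0$, that local semi-completeness implies $L\le 0$, and that the strict inequality is needed only to make the interval of radii in the final sentence nonempty.
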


Since in general when $h\left( 0\right) \neq 0$ the equation $\mu
(r)=0$ is equivalent to a third order algebraic equation it can be
seen by using Vieta's formulas that under condition (\ref{Suf})
this equation has three positive roots $\left\{ r_{i}\right\}
_{i=1}^{3}$ such that $0<r_{1}<r_{\ast }<r_{2}\leq R<r_{3}$ if and
only if $\mu (r_{\ast })<0.$

\begin{theorem}
\label{cors}Let $h:\mathcal{B}_{R}\to X$ be holomorphic in $\mathcal{B}%
_{R}$ with $h\left( 0\right) \neq 0,$ and let
\begin{equation*}
N_{R}=\sup_{\left\Vert x\right\Vert <R}\Rea \left\langle
h(x),x^{\ast }\right\rangle <\infty .
\end{equation*}

\noindent If $R\left( \displaystyle\frac{N_{R}}{R^{2}}-L\right) >4\Vert
h(0)\Vert $ and $\mu (r_{\ast })<0,$
where $r_{\ast}=R\displaystyle\frac{\beta -\sqrt{\beta
^{2}-4}}{2}\in (0,R),$ with $\beta =\displaystyle\frac{\sqrt{c^{2}+2R(b-L)c%
}-c}{c},$ then $-h$ is semi-complete on each $\mathcal{B}_{r}$
with $r \in \left( r_{1},r_{2}\right) $, where $r_{1}<r_{2}$ are the
roots of the equation $\mu (r)=0$ in $(0,R]$.
\end{theorem}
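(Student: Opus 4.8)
The plan is to translate the semi-completeness of $-h$ on $\mathcal{B}_{r}$ into a pointwise negativity statement about the numerical range, and then to read that negativity off the sign of $\mu$ on $(r_{1},r_{2})$. The bridge between the two is the identity $\mathcal{F}(r)=r^{2}\mu(r)$: indeed $\omega(r)=\frac{1}{r}\mathcal{F}(r)$ by \eqref{W} and $\mu(r)=\frac{1}{r}\omega(r)$ by Proposition \ref{teor4.1}(ii), so that $\mathcal{F}(r)=r^{2}\mu(r)$. Combining this with the estimate $N_{r}\le\mathcal{F}(r)$ of Proposition \ref{propT} yields the key inequality
\begin{equation*}
N_{r}=\sup_{\Vert x\Vert=r}\Rea\langle h(x),x^{\ast}\rangle\le r^{2}\mu(r)\qquad(0<r<R).
\end{equation*}

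The first, and main, step is to show that $\mu(r)<0$ for every $r\in(r_{1},r_{2})$. By hypothesis $R\bigl(\frac{N_{R}}{R^{2}}-L\bigr)>4\Vert h(0)\Vert$ and $\mu(r_{\ast})<0$, so we are exactly in the situation analysed by Vieta's formulas just before the statement: the equation $\mu(r)=0$ has the three positive roots $0<r_{1}<r_{\ast}<r_{2}\le R<r_{3}$, and in particular $r_{1},r_{2}$ are its only zeros in $(0,R]$. Moreover, Proposition \ref{teor4.1}(iv) tells us that, under condition \eqref{Suf}, $\mu$ has its unique interior critical point at $r_{\ast}$ and that this point is a minimum; hence $\mu$ is strictly decreasing on $(0,r_{\ast})$ and strictly increasing on $(r_{\ast},R)$. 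Since $\mu(r_{1})=\mu(r_{2})=0$ with $r_{1}<r_{\ast}<r_{2}$ and $\mu(r_{\ast})<0$, this unimodality forces $\mu<0$ throughout the open interval $(r_{1},r_{2})$: on $(r_{1},r_{\ast}]$ the function decreases from $0$ to a negative value, and on $[r_{\ast},r_{2})$ it increases back to $0$, with no further zero in between.

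With this sign information the conclusion is immediate. Fix $r\in(r_{1},r_{2})$ and choose $\varepsilon>0$ so small that $r-\varepsilon>r_{1}$. For any $x$ with $r-\varepsilon<\Vert x\Vert<r$ and any $x^{\ast}\in J(x)$, writing $\rho=\Vert x\Vert\in(r_{1},r_{2})$ and using the key inequality we obtain
\begin{equation*}
\Rea\langle h(x),x^{\ast}\rangle\le N_{\rho}\le \rho^{2}\mu(\rho)<0 .
\end{equation*}
Thus $h$ satisfies \eqref{3b} with $\omega=0$ and $\theta=0$ on $\mathcal{B}_{r}$, i.e. $h$ is $(0,0)$-dissipative, and therefore holomorphically dissipative on $\mathcal{B}_{r}$ in the sense of Definition \ref{def2a}. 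By the equivalence recalled in Section 1 (for a holomorphic $h$ on a ball, $-h$ is semi-complete there if and only if $h$ is dissipative there), $-h$ is semi-complete on $\mathcal{B}_{r}$. Since $r\in(r_{1},r_{2})$ was arbitrary, the theorem follows.

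I expect the only genuinely delicate point to be the sign bookkeeping of the middle paragraph, namely confirming that $r_{\ast}$ lies strictly between $r_{1}$ and $r_{2}$ and that $\mu$ changes sign only at those two points inside $(0,R]$. This is precisely what the Vieta root count $0<r_{1}<r_{\ast}<r_{2}\le R<r_{3}$ and the unimodality supplied by Proposition \ref{teor4.1}(iv) provide; once it is in place, the passage to dissipativity and then to semi-completeness is a direct application of the already-established estimate $N_{r}\le\mathcal{F}(r)$ and of the dissipative--semi-complete dictionary.
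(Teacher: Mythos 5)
Your proof is correct and follows essentially the route the paper intends (the paper leaves this theorem's proof implicit in the discussion preceding it): the growth estimate $N_{r}\le\mathcal{F}(r)=r^{2}\mu(r)$ from Proposition \ref{propT}, the Vieta/unimodality analysis showing $\mu<0$ on $(r_{1},r_{2})$, and the passage to semi-completeness. Your shortcut through the dissipativity equivalence of Section 1, rather than through the resolvent Criterion and Proposition \ref{teor4.1}(ii), is an equivalent and slightly cleaner way to finish.
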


\begin{remark}\label{roots}
One can find the values of $r_{1}$ and $r_{2}$ by the formulas
$$r_{1}=-2\sqrt{Q}\cos \phi -\frac{m}{3}, \quad
r_{2}=-2\sqrt{Q}\cos \left(\phi -\frac{2\pi}{3}\right)
-\frac{m}{3},$$ where $m=\displaystyle\frac{R(RL-2Rb+c)}{c}$,
$\phi =\displaystyle\frac{1}{3}\arccos \frac{A}{\sqrt{Q^{3}}}$,
$Q=\displaystyle\frac{cm^{2}+3LR^{3}+3cR^{2}}{9c}$ and
$A=\displaystyle\frac{2cm^{3}+9m(LR^{3}+cR^{2})-27cR^{3}}{54c}$.
\end{remark}

The situation becomes more transparent if we assume that $
N_{R}=0. $ In this case one of the roots is equal to $R$ and,
actually, condition (\ref{Suf}) already yields
$\mu (r_{\ast})<0$; hence $ 0<r_{1}<$ $r_{2}=R.$
Since this situation is of some special interest in the open unit ball
$\mathcal{B}$ and has applications to fixed point theory, we will describe
it separately.

\begin{corollary}
\label{nullp}Let $-h$ be a semi-complete vector field on the open
unit ball $ \mathcal{B}$ in $X.$ Then $L=\sup_{\left\Vert
u\right\Vert =1}\Rea \left\langle h^{\prime }(0)u,u^{\ast
}\right\rangle \leq 0$, that is, the linear mapping $-h^{\prime }(0)$
is also semi-complete on each ball $\mathcal{ B}_{R},$ $R>0.$
Moreover, if $L$ satisfies the stronger condition
\begin{equation}
L+4\left\Vert h\left( 0\right) \right\Vert <0,  \label{F1}
\end{equation}
then the following assertions hold.

(i) Let $$r_{1}=\frac{1}{2\left\Vert h\left( 0\right) \right\Vert
}\left[ -\left( 2\left\Vert h\left( 0\right) \right\Vert +L\right)
-\sqrt{\left( L+4\left\Vert h\left( 0\right) \right\Vert \right)
L}\right] (<1)$$ be the smaller root of the quadratic equation
\begin{equation}
\left\Vert h\left( 0\right) \right\Vert \left( 1+r\right)
^{2}+Lr=0. \label{F2}
\end{equation}
Then for each $r\in [r_{1},1]$, the mapping $-h$ is semi-complete
on the ball $\mathcal{B}_{r}.$

(ii) The mapping $h$ has a unique null point $x_{0}$ in
$\mathcal{B}$ with $\left\Vert x_{0}\right\Vert \leq r_{1}.$

(iii) If the family $\left\{ \Phi _{\lambda }\right\} _{\lambda
\geq 0}$ is defined by using the resolvent $\Re (\lambda ,h),$
\begin{equation*}
\Phi _{\lambda }:= \Re (\lambda ,h)\circ \left( \lambda I\right) :=\left(
\lambda I-h\right) ^{-1}\circ \left( \lambda I\right) ,
\end{equation*}%
then for each $\lambda >0$, the iterates $\left\{ \Phi _{\lambda
}^{n}\right\} _{n=1}^{\infty }$ converge to the constant mapping
taking the value $x_{0}$,
uniformly on each ball strictly inside $\mathcal{B}$.
\end{corollary}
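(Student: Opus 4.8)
The plan is to specialize the resolvent estimates of Proposition~\ref{teor4.1} to $R=1$ and then to read off the fixed-point statements from the strict self-mapping properties of the associated resolvent $\Phi_\lambda$. Since $-h$ is semi-complete on $\mathcal{B}=\mathcal{B}_1$, it is dissipative, so $\limsup_{r\to1^-}N_r\le 0$ and, by the concluding remark of Section~2, $h$ has unit radius of boundedness. To obtain $L\le0$ I would fix a unit vector $u$ and $u^\ast\in J(u)$ and set $\phi(\zeta)=\langle h(\zeta u),u^\ast\rangle$. Because $\overline{\zeta}\,u^\ast\in J(\zeta u)$ whenever $|\zeta|=r$, one has $\Rea(\overline{\zeta}\phi(\zeta))\le N_r$ on $|\zeta|=r$, while the mean value of $\Rea(\overline{\zeta}\phi(\zeta))$ over $|\zeta|=r$ is exactly $r^{2}\Rea\phi'(0)$ (the first Fourier coefficient, cf. the computation in \eqref{boundprime}). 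Hence $r^{2}\Rea\langle h'(0)u,u^\ast\rangle\le N_r$, and letting $r\to1^-$ gives $\Rea\langle h'(0)u,u^\ast\rangle\le0$; taking the supremum yields $L\le0$, which is precisely the condition that the linear field $-h'(0)$ be semi-complete on every $\mathcal{B}_R$. Note it is essential here to use the boundary behaviour $\limsup_{r\to1}N_r\le0$ rather than the interior supremum, which can be positive when $h(0)\neq0$.

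For the ``moreover'' part I would specialize the majorant $\mu(r)$ of Proposition~\ref{teor4.1} to $R=1$, $c=\|h(0)\|$, and $b=0$; since the genuine value $N_1\le0$ only decreases $\mu$, any radius on which this majorant is nonpositive is truly semi-complete. With $b=0$ the majorant factors as
\[
\mu(r)=\frac{(1-r)\,[\,c(1+r)^{2}+Lr\,]}{r(1+r)},
\]
exhibiting $r=1$ as a root and reducing the remaining roots to those of \eqref{F2}. Condition \eqref{F1} is exactly \eqref{Suf} for $R=1$, $b=0$; it forces \eqref{F2} to have two real roots whose product is $1$, the smaller being $r_1<1$, and makes the bracket $c(1+r)^{2}+Lr$ negative on $(r_1,1/r_1)\supset(r_1,1)$. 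Thus $\mu(r)\le0$ on $[r_1,1]$, with strict inequality on $(r_1,1)$, and $-h$ is semi-complete on each $\mathcal{B}_r$, $r\in[r_1,1]$, which is (i).

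For (ii) and (iii), fix $\lambda>0$ and consider $\Phi_\lambda=(\lambda I-h)^{-1}\circ(\lambda I)$. For $r\in(r_1,1)$ I would apply Proposition~\ref{teor4.1}(i) with $z=\lambda y$, $\|y\|\le r$: because $\mu(r)<0$, the solvability inequality $\lambda(\rho-r)>\rho\mu(\rho)$ holds at $\rho=r$ with a strict gap, hence by continuity for some $\rho_0<r$, so $\Phi_\lambda$ carries $\overline{\mathcal{B}_r}$ into the strictly smaller ball $\mathcal{B}_{\rho_0}$. By the Earle--Hamilton theorem $\Phi_\lambda$ is then a strict contraction of $\mathcal{B}_r$ in its Kobayashi metric, with a unique fixed point $x_0(r)\in\mathcal{B}_{\rho_0}$ whose iterates attract uniformly on sets bounded away from $\partial\mathcal{B}_r$. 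Since a fixed point of $\Phi_\lambda$ is exactly a null point of $h$, uniqueness within each $\mathcal{B}_r$ forces $x_0(r)$ to be independent of $r$; letting $r\downarrow r_1$ places it in $\bigcap_{r>r_1}\mathcal{B}_r=\overline{\mathcal{B}_{r_1}}$, i.e. $\|x_0\|\le r_1$, while any null point in $\mathcal{B}$ lies in some such $\mathcal{B}_r$ and hence equals $x_0$, giving (ii). For (iii), every ball strictly inside $\mathcal{B}$ is contained in some $\mathcal{B}_r$ with $r\in(r_1,1)$, on which the contraction estimate yields $\Phi_\lambda^{n}\to x_0$ uniformly.

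The step I expect to be most delicate is upgrading the (non-strict) self-mapping of $\mathcal{B}_r$ valid on the closed interval $[r_1,1]$ to a genuinely \emph{strict} one on the open interval $(r_1,1)$: it is this strictness, powered by $\mu(r)<0$, that simultaneously produces uniqueness of the null point and the geometric convergence of the resolvent iterates. A secondary subtlety is promoting uniqueness and convergence from each $\mathcal{B}_r$ to the whole ball $\mathcal{B}$, since $\Phi_\lambda$ need not map $\mathcal{B}$ strictly into itself; the nesting argument above is precisely what bridges that gap.
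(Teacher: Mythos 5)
Your proof is correct and follows essentially the same route as the paper: specialize $\omega(r)=r\mu(r)$ from \eqref{W} with $R=1$ and the dissipativity of $h$ to obtain the factorization $\omega(r)=\frac{1-r}{1+r}\left[\Vert h(0)\Vert(1+r)^{2}+Lr\right]$, read off (i) from the sign of the quadratic \eqref{F2}, and deduce (ii)--(iii) from the resolvent solvability criterion combined with the Earle--Hamilton theorem. Two of your choices are worth recording. First, you actually prove the preliminary claim $L\le 0$ (your mean-value identity for $\Rea(\overline{\zeta}\phi(\zeta))$ is exactly the content of \eqref{boundprime} at $R=1$), whereas the paper asserts it without argument. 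Second, at the Earle--Hamilton step the paper enlarges the source, taking $\Phi_{\lambda}:\mathcal{B}_{R'}\to\mathcal{B}_{r}$ with $R'=r(1-\mu(r))>r$; checking the solvability inequality $\lambda\Vert y\Vert+\omega(r)<\lambda r$ at $\Vert y\Vert=R'$ shows that this particular radius works only for $\lambda\le 1$ (for general $\lambda>0$ one needs $R'=r(1-\mu(r)/\lambda)$), so your variant --- shrinking the target to $\mathcal{B}_{\rho_{0}}$ with $\rho_{0}<r$, obtained by continuity from $\mu(r)<0$ --- is the cleaner and uniformly valid way to run the argument for all $\lambda>0$. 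One caveat: in the sentence ``the genuine value $N_{1}\le 0$ only decreases $\mu$'' you must mean the boundary quantity $\limsup_{s\to 1^{-}}N_{s}$, not $\sup_{\Vert x\Vert<1}\Rea\langle h(x),x^{\ast}\rangle$, which can be strictly positive for a semi-complete field with $h(0)\ne 0$ (a point you yourself flag in your first paragraph); the substitution is nonetheless legitimate because what enters the Hadamard--Borel--Carath\'eodory estimate is $\sup_{|\xi|<1}\Rea g(\xi)$ for the auxiliary function $g$ of \eqref{gfun}, whose real part is harmonic and hence controlled by its boundary limsup via the maximum principle; the paper is equally terse on this point.
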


\begin{proof}
First we note that the inequality \ $\mu (r)\leq 0$ is equivalent
to the inequality $\omega \left( r\right) =r\mu (r)=\frac{1-r}{1+r
}\left( \left\Vert h\left( 0\right) \right\Vert \left( 1+r\right)
^{2}+Lr\right) \leq 0.$ Therefore, $\mu (r)$ is negative on the
interval $ [r_{1},1)$ with $r_{1}<1$ if and only if condition
(\ref{F1}) holds. In this case $r_{1}$ is the smaller root of
equation (\ref{F2}). This prove assertion (i). To prove assertions
(ii) and (iii) we return to equation (\ref{4.12}) and recall that
it has a unique solution if and only if condition (\ref{4.14})
holds.\ \ Since for each $r\in $\ $(r_{1},1),$\
$\omega \left( r\right) $ is negative, one can set $\lambda =0$
and $z=0$ to obtain the existence and uniqueness of the solution
of the equation $h\left( x\right) =0.$ On the other hand, setting
in (\ref{4.12}) $z=\lambda y$, $\left\Vert y\right\Vert =r\left(
1-\mu (r)\right) :=R,$ we see that for each $r\in $\ $
(r_{1},1),$\ inequality (\ref{4.14})\ holds for each $\lambda >0$.
This means that for such $\lambda $ the mapping $\Phi _{\lambda
}=\left( \lambda I-h\right) ^{-1}\circ \left( \lambda I\right) $\
is well defined on the ball of radius $R$ and maps it into the
smaller ball of radius $r\in (r_{1},1).$ It then follows from the
Earle-Hamilton Theorem \cite{E-H} that \ $\Phi _{\lambda }$ has a
unique fixed point $x\left( \lambda \right) \in $\ $
\mathcal{B}_{r}$ and that its iterates $\left\{ \Phi _{\lambda
}^{n}\right\} _{n=1}^{\infty }$\ \ converge to \ $x\left( \lambda
\right)$, uniformly on each ball strictly inside
$\mathcal{B}_{R}.$\ Since \ $\mu (r)\rightarrow 0$ as\
$r\rightarrow 1^{-},$\ \ we see that $R\rightarrow 1.$ Hence
this convergence is uniform on each ball strictly inside
$\mathcal{B}$. Finally, we note that since $h\left( x_{0}\right)
=0,$ \ $\Phi _{\lambda }\left( x_{0}\right) =x_{0}.$
Hence \ $x\left( \lambda \right) =x_{0}$ does not depend on
$\lambda >0$\ \ because of the uniqueness property. The proof is
complete.
\end{proof}

\begin{corollary}
\label{fixp}Let $F:\mathcal{B\rightarrow B}$ be a holomorphic
self-mapping of $\mathcal{B}$ and assume that
\begin{equation}
L_{F}:=\sup_{\left\Vert u\right\Vert =1}\Rea \left\langle
F^{\prime }(0)u,u^{\ast }\right\rangle <1-4\left\Vert F\left(
0\right) \right\Vert . \label{Fix}
\end{equation}%
Then $F$ has a unique fixed point $x_{0}$ in $\mathcal{B}$ with
$\left\Vert x_{0}\ \right\Vert \leq r_{1,}$ where $r_{1}$ is the
fixed point in the interval $\left( 0,1\right) $ of the scalar
mapping\ $\phi \left( r\right) =\left\Vert F\left( 0\right)
\right\Vert \left( 1+r\right) ^{2}+rL_{F}.$\ \
\end{corollary}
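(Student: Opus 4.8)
The plan is to reduce the statement to the null-point result of Corollary~\ref{nullp} by passing to the infinitesimal generator associated with $F$. Concretely, I would set $h:=F-I$, so that $h(x)=F(x)-x$, and observe that a point $x_0$ is a fixed point of $F$ exactly when $h(x_0)=0$, i.e. when $x_0$ is a null point of $h$. The first task is therefore to verify that $-h=I-F$ is a semi-complete vector field on $\mathcal B$, which by the criterion recalled in Section~1 is equivalent to the dissipativity of $h$.

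To check dissipativity I would estimate $V_{\mathcal B}(h_s)$ directly. For $\|x\|=1$, $x^{\ast}\in J(x)$ (so $\langle x,x^{\ast}\rangle=1$ and $\|x^{\ast}\|=1$), and $0\le s<1$, one has
$\Rea\langle h_s(x),x^{\ast}\rangle=\Rea\langle F(sx),x^{\ast}\rangle-s\le\|F(sx)\|\,\|x^{\ast}\|-s<1-s$,
using that $F$ maps $\mathcal B$ into $\mathcal B$. Letting $s\to 1^{-}$ gives $\limsup_{s\to 1^{-}}\sup\Rea V_{\mathcal B}(h_s)\le 0$, so $h$ is dissipative and $-h$ is semi-complete; in particular Corollary~\ref{nullp} is applicable to $h$. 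This recognition — that the generator $F-I$ of a holomorphic self-map is automatically dissipative — is the only genuinely substantive step; everything that follows is a transcription.

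The next step is purely computational: I would express the invariants of $h$ in terms of those of $F$. Since $h(0)=F(0)$ we have $\|h(0)\|=\|F(0)\|$, and since $h^{\prime}(0)=F^{\prime}(0)-I$ while $\langle u,u^{\ast}\rangle=1$ for $\|u\|=1$, $u^{\ast}\in J(u)$, the quantity $L=\sup_{\|u\|=1}\Rea\langle h^{\prime}(0)u,u^{\ast}\rangle$ equals $L_F-1$. Consequently the hypothesis \eqref{Fix}, namely $L_F<1-4\|F(0)\|$, is precisely the condition $L+4\|h(0)\|<0$ of \eqref{F1}, so the stronger hypothesis of Corollary~\ref{nullp} holds for $h$.

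Finally I would invoke Corollary~\ref{nullp}: under \eqref{F1} the mapping $h$ has a unique null point $x_0\in\mathcal B$ with $\|x_0\|\le r_1$, where $r_1$ is the smaller root of $\|h(0)\|(1+r)^2+Lr=0$ from \eqref{F2}. Translating back, $x_0$ is the unique fixed point of $F$, and substituting $\|h(0)\|=\|F(0)\|$ and $L=L_F-1$ rewrites the defining equation as $\|F(0)\|(1+r)^2+rL_F=r$, that is $\phi(r)=r$; hence $r_1$ is exactly the fixed point of $\phi$ in $(0,1)$ asserted in the statement. I do not expect any obstacle beyond the setup of the first two paragraphs, since the identification of $r_1$ and the uniqueness both come directly from Corollary~\ref{nullp}.
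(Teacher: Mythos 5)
Your proposal is correct and follows exactly the route the paper intends: Corollary \ref{fixp} is stated as an immediate consequence of Corollary \ref{nullp} applied to $h=F-I$, and your translation $\|h(0)\|=\|F(0)\|$, $L=L_F-1$ turns \eqref{F1} into \eqref{Fix} and \eqref{F2} into $\phi(r)=r$ as required. Your explicit verification that $F-I$ is dissipative (hence $I-F$ is semi-complete) supplies the one step the paper leaves implicit, and it is done correctly.
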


\begin{remark}
\label{shwarz}Inequality (\ref{Fix})\ \ reminds us of the well-known
one-dimensional Schwarz inequality%
\begin{equation*}
\left\vert F^{\prime }\left( 0\right) \right\vert \leq
1-\left\vert F\left( 0\right) \right\vert ^{2}.
\end{equation*}%
However, even in the one-dimensional case the last inequality does
not imply the existence of an interior fixed point of $F.$\
\end{remark}

We conclude this section with a result on the holomorphic
extension of the associated resolvent mapping to  key domains of
the complex plane.

\begin{theorem}
\label{hol} Let $h$ be a semi-complete vector field on the open unit ball $%
\mathcal{B}$ in $X$ with $h\left( 0\right) =0$ and $h^{\prime }\left(
0\right) =-I.$ Then for each $r\in \left( 0,1\right) $ and $\lambda \in
\Omega $, where $\Omega =\Omega _{1}\cup \Omega _{2}$ is a key domain
defined by the disc $\Omega _{1}=\left\{ \lambda \in
\mathbb{C}
:\left\vert \lambda \right\vert <\frac{1}{2}\frac{1-r}{1+r}\right\} $ and
the sector  $\Omega _{2}=\left\{ \lambda \in
\mathbb{C}
,\text{ }\lambda \neq 0:\left\vert \arg \lambda \right\vert <\arcsin \frac{%
1-r^{2}}{1+r^{2}}\right\} ,$ the associated resolvent mapping
\begin{equation*}
\Phi _{\lambda }=(\lambda I-h)^{-1}\circ (\lambda I)
\end{equation*}%
is a self-mapping of the ball $\mathcal{B}_{r}.$ Moreover, this mapping is
holomorphic on $\Omega \times \mathcal{B}_{r}.$
\end{theorem}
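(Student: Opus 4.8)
The plan is to reduce everything to one complex variable along slices, to locate the numerical range of $h$ inside an explicit disc by means of the maximum principle and the Schwarz lemma, and then to read off the admissible set of $\lambda$ from a single distance computation. The hypotheses mean that $h$ is dissipative, i.e. $\limsup_{\|x\|\to1^-}\Rea\langle h(x),x^\ast\rangle\le 0$. Fix $\|u\|=1$, $u^\ast\in J(u)$, set $f(\zeta)=\langle h(\zeta u),u^\ast\rangle$ and (since $f(0)=0$) let $g$ be as in \eqref{gfun}, so $g(\zeta)=f(\zeta)/\zeta$ and $g(0)=f'(0)=\langle h'(0)u,u^\ast\rangle=-1$. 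For $x=\zeta u$ with $|\zeta|=s$ one has $x^\ast=\overline{\zeta}u^\ast\in J(x)$ and $\langle h(x),x^\ast\rangle=\overline{\zeta}f(\zeta)=s^2g(\zeta)$. Dissipativity gives $\Rea g\le 0$ near $\partial\Delta_1$, hence $\Rea g\le 0$ throughout $\Delta_1$ by the maximum principle for harmonic functions. Thus $-g$ maps $\Delta_1$ into the right half-plane with $-g(0)=1$, and applying the Schwarz lemma to the Cayley transform $\zeta\mapsto\frac{-g(\zeta)-1}{-g(\zeta)+1}$ shows that, for $|\zeta|=s$, the value $P:=-g(\zeta)$ lies in the closed disc $\overline{D_s}=\{w:|w-\frac{1+s^2}{1-s^2}|\le\frac{2s}{1-s^2}\}$. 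Equivalently, for every $x$ with $\|x\|=s$ and every $x^\ast\in J(x)$ we have $\langle h(x),x^\ast\rangle=-s^2P$ with $P\in\overline{D_s}$.

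Next I would exploit the resolvent equation. If $x=\Phi_\lambda(y)$, that is $\lambda x-h(x)=\lambda y$, then $\lambda(x-y)=h(x)$; pairing with $x^\ast\in J(x)$ (where $\|x\|=s$) and inserting the localization yields $\langle y,x^\ast\rangle=s^2(1+P/\lambda)$, whence $\|y\|\ge|\langle y,x^\ast\rangle|/\|x^\ast\|=\frac{s}{|\lambda|}|\lambda+P|\ge\frac{s}{|\lambda|}\,\mathrm{dist}(-\lambda,\overline{D_s})$. Consequently, as soon as $\mathrm{dist}(-\lambda,\overline{D_s})\ge|\lambda|$ for all $s\le r$, every solution with $\|x\|\le r$ satisfies $\|\Phi_\lambda(y)\|=s\le\|y\|$, which simultaneously gives the self-map property and provides an a priori barrier keeping preimages away from $\partial\mathcal B_r$.

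I would then identify the admissible $\lambda$. Writing $a_s=\frac{1+s^2}{1-s^2}$ and $\rho_s=\frac{2s}{1-s^2}$ and using $a_s^2-\rho_s^2=1$, the condition $\mathrm{dist}(-\lambda,\overline{D_s})\ge|\lambda|$, i.e. $|\lambda+a_s|\ge|\lambda|+\rho_s$, reduces after squaring to $(1+s^2)\Rea\lambda-2s|\lambda|\ge-\frac{1-s^2}{2}$. When $\cos(\arg\lambda)\ge\frac{2s}{1+s^2}$, i.e. $|\arg\lambda|\le\arcsin\frac{1-s^2}{1+s^2}$, the left-hand side is nonnegative and the inequality holds for every modulus; since $\arcsin\frac{1-s^2}{1+s^2}$ decreases in $s$, the binding value is $s=r$ and this is exactly the sector $\Omega_2$. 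When $|\arg\lambda|$ is larger, the extreme case $\arg\lambda=\pi$ forces $|\lambda|(1+s)^2\le\frac{1-s^2}{2}$, i.e. $|\lambda|\le\frac12\frac{1-s}{1+s}$, and this (again binding at $s=r$) is the disc $\Omega_1$; as $2s-(1+s^2)\cos(\arg\lambda)\le(1+s)^2$, the $\arg\lambda=\pi$ bound implies the inequality for all arguments. Hence $\Omega=\Omega_1\cup\Omega_2$ is precisely the set for which the barrier holds at every radius $s\le r$.

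Finally, existence, uniqueness and holomorphy. By Harris \cite{H} one has $\sigma(h)=\sigma(h'(0))=\{-1\}$, and $-1\notin\Omega$, so for each $\lambda\in\Omega$ the resolvent $(\lambda I-h)^{-1}$ exists as a holomorphic germ near $-h(0)=0$; thus $\Phi_\lambda$ is defined and holomorphic near $0$ with $\Phi_\lambda(0)=0$. I would globalize along the segment $y(t)=ty$, $t\in[0,1]$: the contraction bound $\|\Phi_\lambda(y(t))\|\le t\|y\|<r$ confines the preimages to a compact subset of $\mathcal B_r$, so the standard continuation argument extends the resolvent to all of $\mathcal B_r$ as a self-map, and joint holomorphy on $\Omega\times\mathcal B_r$ follows from the holomorphic inverse function theorem. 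The main obstacle is exactly this globalization: converting the boundary/numerical-range estimate into a genuinely defined, single-valued self-map requires local invertibility of $\lambda I-h'(x)$ at interior points $x\in\mathcal B_r$ (equivalently $\lambda\notin\sigma(h'(x))$ for $\lambda\in\Omega$). I would address this by applying the same numerical-range localization to the linear part $h'(x)$, or by appealing to the resolvent and semi-complete vector field machinery of \cite{HRS,ReichS}, with the contraction bound above as the crucial barrier.
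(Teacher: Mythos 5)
Your localization of the numerical range is correct and in fact sharper than what the paper uses: by the maximum principle and the Schwarz lemma applied to the Cayley transform you place $-\langle h(x),x^\ast\rangle/s^2$ in the closed disc centered at $\tfrac{1+s^2}{1-s^2}$ of radius $\tfrac{2s}{1-s^2}$, and your single distance inequality $|\lambda+a_s|\ge|\lambda|+\rho_s$ does recover exactly the disc $\Omega_1$ and the sector $\Omega_2$ in a unified way. This is a genuinely different (and elegant) route to identifying $\Omega$; the paper instead derives the two regions separately, using Proposition \ref{propT} for the disc and Proposition \ref{propN} (after rewriting the equation as $(|\lambda|I-e^{-i\theta}h)(x)=|\lambda|y$, i.e.\ showing $e^{-i\theta}h$ is dissipative on $\mathcal B_r$) for the sector.

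The genuine gap is in the existence step, and you have flagged it yourself without closing it. Everything you prove about $\Phi_\lambda$ is conditional: \emph{if} $x$ solves $\lambda x-h(x)=\lambda y$ with $\|x\|=s\le r$, \emph{then} $\|x\|\le\|y\|$. An a priori bound on solutions does not produce a solution, and your proposed continuation along $y(t)=ty$ requires $\lambda I-h'(x)$ to be invertible at every interior point $x$ reached by the path; Harris's theorem $\sigma(h)=\sigma(h'(0))$ only controls the spectrum at the origin, and nothing in your argument controls $\sigma(h'(x))$ for $x\ne 0$. The paper avoids this entirely: it verifies the \emph{strict} inequality $\sup_{\|x\|=r}\Rea\langle G(x),x^\ast\rangle<0$ for $G(x)=\lambda y-\lambda x+h(x)$ and all $x$ on the sphere (not merely for solutions), and then invokes the global implicit function theorem (Lemma \ref{lem1}, from \cite{Kh-R-S} and \cite{RS-SD-96}), whose conclusions include existence, uniqueness, invertibility of $G_x'$, and joint holomorphy in $(\lambda,y)$ — precisely the items you leave open. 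Note also that if you try to convert your disc localization into the required bound on $\Rea\langle G(x),x^\ast\rangle$ for arbitrary $x$ on the sphere, you get $\Rea\langle G(x),x^\ast\rangle\le r^2(|\lambda|-\Rea\lambda-\Rea P)$ with $\Rea P\ge\frac{1-r}{1+r}$, which handles $\Omega_1$ but fails for large $|\lambda|$ in the sector; so for $\Omega_2$ you would still need the paper's rotation trick rather than your unified distance computation. To repair the proof, replace the a priori estimate by the sphere estimate plus Lemma \ref{lem1} (your "machinery of \cite{HRS,ReichS}" fallback is, in effect, the paper's proof).
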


\begin{proof}
First we show that $\Phi _{\lambda }$ is well defined on $\Omega_{1}$
and maps  $\mathcal{B}_{r}$ into itself, or which is one and the
same, that the equation
\begin{equation}
(\lambda I-h)(x)=\lambda y  \label{res}
\end{equation}
has a unique solution $x\in \mathcal{B}_{r}$ for each $\lambda \in
\Omega _{1}$ and $y\in \mathcal{B}_{r}$. To this end, we consider
the mapping
\begin{equation}
G(x)=\lambda y-\lambda x+h(x)  \label{mapG}
\end{equation}%
and show that
\begin{equation*}
\sup_{\left\Vert x\right\Vert =r}\Rea \left\langle G(x),x^{\ast
}\right\rangle <0.
\end{equation*}

Indeed, it follows from Proposition \ref{propT} that\ under our assumptions,
\begin{equation*}
\begin{split}
\sup_{\left\Vert  x\right\Vert =r}\Rea \left\langle G(x),x^{\ast
}\right\rangle &\leq\sup_{\left\Vert x\right\Vert =r}\Rea
\left\langle \lambda \left( y-x\right) ,x^{\ast }\right\rangle
+\frac{ r^{2}\left( 1-r\right) }{1+r}\sup_{\left\Vert u\right\Vert
=1}\Rea  \left\langle h^{\prime }\left( 0\right) u,u^{\ast
}\right\rangle  \\ &\leq \!\sup_{\left\Vert u\right\Vert
=1}\!\left\vert \lambda \right\vert \left\Vert y-x\right\Vert
r\!-\!\frac{ r^{2}\left( 1-r\right)
}{1+r}\!<\!\frac{1}{2}\frac{1-r}{1+r}2r^{2}\!-\!\frac{ r^{2}\left(
1-r\right) }{1+r}\!=\!0.
\end{split}
\end{equation*}
Thus $G$ has a unique null point $x=$\ $\Phi _{\lambda }\left(
y\right) $ in $\mathcal{B}_{r}$, as required.

Now assume that for some $r\in \left( 0,1\right) $, the
complex number $\lambda =\left\vert \lambda \right\vert e^{i\theta
}\in \Omega _{2}$ is given. \ Then
equation (\ref{res}) can be rewritten as
\begin{equation*}
(\left\vert \lambda \right\vert I-e^{-i\theta }h)(x)=\left\vert
\lambda \right\vert y.
\end{equation*}
In its turn, the last equation has a unique solution $x\in
\mathcal{B}_{r}$ for each $\lambda \in \Omega _{2}$ and $y\in
\mathcal{B}_{r}$\ whenever \ the mapping $e^{-i\theta }h$ is
semi-complete on \ $\mathcal{B}_{r}$ or, which is one and the same,
\begin{equation*}
\sup_{\left\Vert x\right\Vert =r}\Rea \left\langle e^{-i\theta
}h(x),x^{\ast }\right\rangle \leq 0.
\end{equation*}%
It follows from Proposition \ref{propN} that
\begin{equation}
\sup_{\left\Vert x\right\Vert =r}\Rea \left\langle e^{-i\theta
}h(x),x^{\ast }\right\rangle \leq r^{2}\left( \frac{2r\left(
1-r\cos \theta \right) }{1-r^{2}}-\cos \theta \right)
=\frac{r^{2}}{1-r^{2}}\varphi \left( r\right) \leq 0,  \label{fi}
\end{equation}%
as long as $\left\vert \theta \right\vert =\left\vert \arg \lambda
\right\vert <\arcsin \frac{1-r^{2}}{1+r^{2}},$ and we are done. To
finish the proof we just note that since the point  $x=0$ is a
regular null point of the mapping $G$\ \ defined by (\ref{mapG})
$\left( G^{\prime }\left( 0\right) =-I\right) ,$ it follows from
a version of the global implicit function theorem in \cite{Kh-R-S}
(see also Lemma \ref{lem1}  below) that the solution $x=$ $\Phi
_{\lambda }\left( y\right) $ of the equation\ \ $G\left( x\right)
\left( =G\left( x,\lambda ,y\right) \right) =0$ holomorphically
depends on $\left( \lambda ,y\right) $\ $\in $\ $\Omega \times
\mathcal{B}_{r}.$ This completes our proof.\ \
\end{proof}

\begin{remark}
\bigskip \label{Spiral} Actually, as we will see below (see Section 4),
the solution $r=r\left( \theta \right) =\frac{1-\left\vert \sin \theta
\right\vert }{\cos \theta }$ of the equation $\varphi \left(
r\right) =0,$ where  $\varphi \left( r\right) =2r\left( 1-r\cos
\theta \right) -\cos \theta \left( 1-r^{2}\right) $ is defined in
(\ref{fi}), determines the radius of spirallikeness for starlike mappings
defined on the unit ball $%
\mathcal{B}$.
\end{remark}

\ \ \ \ \ \ \ \ \ \ \ \ \ \ \ \ \ \ \ \ \ \ \ \ \ \ \ \ \ \ \ \ \ \ \ \ \ \
\ \ \ \ \ \ \ \ \ \ \ \ \ \ \ \ \ \ \ \ \ \ \ \ \ \ \ \ \ \ \ \ \ \ \ \ \ \
\ \ \ \ \ \ \ \ \ \ \ \ \ \ \ \ \ \ \ \ \ \ \ \ \ \ \ \ \ \ \ \ \ \ \

\section{Bloch radii}

Let $F:\mathcal{B}\rightarrow X$ be such that $F(0)=0$ and
$F^{\prime }(0)$ is an invertible operator on $X$. In other words,
$F$ is locally biholomorphic around the origin.

One says that the positive numbers $r$ and $\rho $ are Bloch radii
for $F$ if $F( \mathcal{B}_{r})\supseteq \mathcal{B}_{\rho }$ and
$F^{-1}:\mathcal{B}_{\rho }\rightarrow\mathcal{B}_{r}$
is a well-defined holomorphic mapping on $ \mathcal{B}_{\rho }$.

A deficiency of this definition is that the pair $(r,\rho)$ is not
uniquely defined. If, for example, we find the maximal $\rho$ for
which $F^{-1}$ is holomorphic on $\mathcal{B}_{\rho }$, then for each $%
\widetilde{r}\in [r,1]$, the pair $(\widetilde{r},\rho )$ constitutes
Bloch radii. However, in this case it is often desirable to find
the minimal $r$ for which $F(\mathcal{B}_{r})\supseteq
\mathcal{B}_{\rho }$.

Sometimes it is preferable to find a number $0<r_{*}\leq 1$ and a
continuous
function $\rho (r)$ on $[0,r_{*}]$ (if it exists) such that all the pairs $%
(r,\rho (r))$ are Bloch radii. In this case, one can investigate
the distortion (dilation) coefficient
\begin{equation*}
\varepsilon (r) =\frac{r}{\rho (r)}>0
\end{equation*}
on the interval $[0,r_{*}]$ and look for its bounds.

For example, if $F(0)=0$ and $F^{\prime }(0)=I$, then in the
one-dimensional case it follows from Koebe's $1/4$--theorem that
if $r_{\ast }$ is a radius of univalence of $F$ in $\mathcal{B}$,
then $\varepsilon (r)=4 $ for each $r\in (0,r_{\ast }]$.


In general, under the above normalization, the inverse function
theorem
shows that Bloch radii exist for $F$. In this case, one can write $%
F(x)=x-h(x)$, where $h^{\prime }(0)=0$. However, the latter
condition is not necessary: one can just require that $I-h^{\prime
}(0)$ be an invertible linear operator. In particular, in order
to get estimates in terms of the numerical range we can assume
that
\begin{equation*}
L=\sup\limits_{\Vert x\Vert =1}\Rea \langle h^{\prime }\left(
0\right)x ,x^{\ast }\rangle <1.
\end{equation*}

Consider the equation
\begin{equation}
x-h(x)=z,\text{ \ }z\in X,\text{ \ }\left\Vert x\right\Vert
<1\text{ .} \label{1}
\end{equation}

Our goal is to find numbers $0<r<1$ and $\rho \; (=\rho (r))$ such
that for all $z\in \mathcal{B}_{\rho }$, equation ($\ref{1}$) has
a unique solution $ x=x(z)\in \mathcal{B}_{r}$, which is
holomorphic in $z\in \mathcal{B}_{\rho }$.

If $N=\limsup\limits _{s\rightarrow 1^{-1}}\sup\limits_{\Vert x\Vert
=1}\Rea \langle h(sx),x^{\ast }\rangle <\infty $, then one can use
Proposition \ref{teor4.1} with $R=1$ and $\lambda =1$ to obtain
estimates for the Bloch radii. However, one can devise an
algorithm for finding lower bounds of Bloch radii under weaker
restrictions.

Let us assume that $h(0)=0$ and that for some $\theta \in \mathbb{R}$,
the mapping $h$ satisfies the condition%
\begin{equation}
\sup_{x\in B}\Rea \left\langle e^{i\theta }h(x),x^{\ast
}\right\rangle =N(\theta )<\infty \text{ .}  \label{2}
\end{equation}
Suppose that the following numbers are given:
\begin{equation}  \label{3}
L(\theta ):= \sup_{\left\Vert x\right\Vert =1}\Rea \left\langle
e^{i\theta }h^{\prime }(0)x,x^{\ast }\right\rangle \quad
\mbox{and} \quad l(\theta ):=\inf_{\left\Vert x\right\Vert =1}\Rea
\left\langle e^{i\theta }h^{\prime }(0)x,x^{\ast }\right\rangle .
\end{equation}
We let $L:=L(0)$.

Since the Fr\'{e}chet derivative of a holomorphic mapping is a
bounded linear operator, $L(\theta )$ and $l(\theta )$ are finite for all $%
\theta \in\mathbb{R}$.

We use the following version of the implicit function theorem

\begin{lemma}[\protect\cite{RS-SD-96} and \protect\cite{Kh-R-S}]
\label{lem1} Let $G \; (=G(x,z))$ be a holomorphic mapping in the
domain $ \mathcal{D=B}_{r}\times \mathcal{B}_{\rho }$  with values
in $X$ and assume that for each $z\in \mathcal{B} _{\rho }$ and
$x=su$, $\left\Vert u\right\Vert =r$, $0<s<1$,
\begin{equation*}
\limsup_{s\rightarrow 1^{-}}\Rea \left\langle G(su,z),u^{\ast
}\right\rangle <0\text{.}
\end{equation*}
Then

(i) for each $z\in \mathcal{B}_{\rho }$, there is a unique solution $%
x \,(=x(z))\in \mathcal{B}_{r}$ of the equation%
\begin{equation*}
G(x,z)=0,
\end{equation*}%
which holomorphically depends on $z\in \mathcal{B}_{\rho }$;

(ii) for each $z\in\mathcal{B}_{\rho}$, the linear operator
$G_{x}^{\prime }(x(z),z)$ is invertible in $X$.
\end{lemma}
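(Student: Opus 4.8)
The plan is to prove this implicit function theorem via a flow-invariance argument combined with a fixed-point / continuity-in-parameter analysis. First I would fix $z\in \mathcal{B}_{\rho}$ and study the autonomous Cauchy problem $\dot{x}(t)=G(x(t),z)$, $x(0)=0$. The hypothesis $\limsup_{s\to 1^-}\Rea\langle G(su,z),u^\ast\rangle<0$ for every boundary direction $u$ with $\|u\|=r$ is exactly a strict inward-pointing (flow-invariance) condition on $\partial\mathcal{B}_r$, so by the standard range/flow-invariance results for holomorphic (hence locally Lipschitz) vector fields recalled in the excerpt (see \cite{ReichS, HRS}), the trajectory stays in $\mathcal{B}_r$ for all $t\geq 0$ and $-G(\cdot,z)$ is semi-complete on $\mathcal{B}_r$. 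This produces a well-defined one-parameter semigroup whose stationary point is the desired solution of $G(x,z)=0$.

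For existence and uniqueness of the null point, I would argue as follows. The strict dissipativity condition forces $\Rea\langle G(x,z)-G(y,z),\,(x-y)^\ast\rangle$ to have a sign making the semigroup a strict contraction in a suitable sense, or alternatively one applies the Earle--Hamilton theorem to the resolvent-type map $x\mapsto x+tG(x,z)$ for small $t>0$, which the flow-invariance condition shows maps $\mathcal{B}_r$ strictly inside itself. Either route yields for each fixed $z$ a unique $x(z)\in\mathcal{B}_r$ with $G(x(z),z)=0$, establishing the set-theoretic part of (i).

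The holomorphic dependence on $z$ is where the classical implicit function theorem enters, and this is the step I expect to be the main obstacle, since one must verify invertibility of $G'_x$ before one can even invoke it — yet part (ii) is phrased as a consequence. The clean way around the apparent circularity is to prove (ii) first, directly from the strict inequality: at the null point $x(z)$, the dissipativity of $G$ at the boundary, together with the numerical-range characterization of the spectrum (Remark~\ref{rem.A}, i.e. $\sigma(G'_x)=\sigma(\text{its linear part})$ governed by the sign of the real numerical range), shows that $0\notin\sigma(G'_x(x(z),z))$, so $G'_x(x(z),z)$ is invertible. With invertibility in hand at each $x(z)$, the local holomorphic implicit function theorem gives a holomorphic solution branch near each $z_0$; uniqueness from the previous paragraph guarantees these local branches coincide with the global $x(z)$, so $x(z)$ is holomorphic throughout $\mathcal{B}_\rho$.

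I would then assemble the argument in the order: (a) flow-invariance and semi-completeness of $-G(\cdot,z)$ on $\mathcal{B}_r$ for fixed $z$; (b) existence and uniqueness of the null point $x(z)$ via the semigroup's fixed point (Earle--Hamilton); (c) spectral/invertibility of $G'_x$ at $x(z)$ using the numerical-range condition, giving (ii); (d) local holomorphic dependence via the standard implicit function theorem plus the uniqueness from (b) to globalize, giving the holomorphy claim in (i). The technical heart is reconciling the global uniqueness with the local analytic structure, and ensuring the strict inequality $\limsup_{s\to 1^-}\Rea\langle G(su,z),u^\ast\rangle<0$ is used both to trap the flow and to rule out $0$ from the spectrum of the derivative.
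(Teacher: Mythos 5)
The paper does not actually prove Lemma \ref{lem1}: it is quoted from \cite{RS-SD-96} and \cite{Kh-R-S}, so your proposal can only be measured against the arguments in those references. Your overall architecture --- flow invariance for fixed $z$, a fixed point via Earle--Hamilton, holomorphic dependence, invertibility of the derivative --- is the right one, but two of the specific mechanisms you propose do not work as stated. For existence and uniqueness, neither of your two devices is justified. The boundary condition $\Rea\langle G(su,z),u^{\ast}\rangle<0$ does \emph{not} imply the monotonicity inequality $\Rea\langle G(x,z)-G(y,z),(x-y)^{\ast}\rangle\le 0$: holomorphic dissipativity is a condition on the sphere relative to the origin and is strictly weaker than accretivity of the operator. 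And the Euler step $x\mapsto x+tG(x,z)$ need not map $\mathcal{B}_{r}$ strictly into itself for any \emph{uniform} $t>0$ in a general Banach space: $t\mapsto\|x+ty\|$ is convex, so the one-sided derivative $\max_{x^{\ast}\in J(x)}\Rea\langle y,x^{\ast}\rangle/\|x\|$ bounds $\|x+ty\|$ only from below, and the size of $t$ needed to decrease the norm depends on $x$, with no compactness on the sphere. The cited proofs avoid this by working with the time-$T$ maps of the semigroup (the differential inequality for $\|x(t)\|$ along trajectories shows these map $\mathcal{B}_{r}$ strictly inside itself, so Earle--Hamilton applies) or with the nonlinear resolvent $(I-tG(\cdot,z))^{-1}$. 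Holomorphy of $z\mapsto x(z)$ then comes as a locally uniform limit of holomorphic iterates, \emph{not} from the classical implicit function theorem, which dissolves the circularity you worry about and reverses your intended order of (i) and (ii).

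Your argument for (ii) also has a gap: Remark \ref{rem.A} identifies $\sigma(h)$ with $\sigma(h'(0))$, the spectrum of the derivative \emph{at the origin}, whereas you need invertibility of $G_{x}'$ at the null point $x(z)$, which is generally not the origin; the hypothesis controls $\langle G(x,z),x^{\ast}\rangle$ only for $x^{\ast}\in J(x)$, i.e. support functionals relative to $0$, so it gives no direct information about the numerical range of $G_{x}'(x(z),z)$. The standard fix is to apply part (i) to $G(x,z)-w$ for $\|w\|$ small (the strict boundary inequality persists), obtaining a holomorphic local inverse $w\mapsto x(z,w)$ of $G(\cdot,z)$ near $x(z)$; differentiating the identities $G(x(z,w),z)=w$ and $x(z,G(y,z))=y$ then exhibits a two-sided bounded inverse of $G_{x}'(x(z),z)$. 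With these two repairs your outline coincides with the proofs in \cite{RS-SD-96} and \cite{Kh-R-S}.
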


We now consider the mapping $G:B\times X\rightarrow X$ defined by
\begin{equation}
G(x,z):=z-x+h(x)  \label{4}
\end{equation}%
and note that equation (\ref{1}) is equivalent to%
\begin{equation}
G(x,z)=0\text{ .}  \label{5}
\end{equation}

In view of Lemma \ref{lem1}, our aim becomes to find $r\in (0,1)$
and $\rho =\rho (r)>0$ such that the following inequality holds
whenever $\left\Vert x\right\Vert =r$ and $\left\Vert z\right\Vert
<\rho \; (=\rho (r))$:
\begin{equation}
\sup_{\left\Vert x\right\Vert =r}\Rea \left\langle G(x,z),x^{\ast
}\right\rangle <0.  \label{6}
\end{equation}
Equation (\ref{6}) is a sufficient condition for (\ref{5}) to have
a unique solution $x=x(z)$ in the ball $\mathcal{B}_{r}$, $r\in
(0,1)$.

Since%
\begin{equation}
\sup_{\left\Vert x\right\Vert =r}\Rea \left\langle G(x,z),x^{\ast
}\right\rangle \leq \left\Vert z\right\Vert \cdot
r-r^{2}+\sup_{\left\Vert x\right\Vert =r}\Rea \left\langle
h(x),x^{\ast }\right\rangle  \label{7},
\end{equation}%
we have, as a matter of fact, to use an appropriate growth estimate for the
last term in (\ref{7}).

Now it follows from Proposition \ref{propN} (with $R=1$) that

\begin{equation}  \label{8}
\sup_{\left\Vert x\right\Vert =r}\Rea \left\langle h(x),x^{\ast
}\right\rangle \leq r^{2}\left[ L+\mathcal{L}(\theta
,r)(N(\theta )-l(\theta ))\right].
\end{equation}
Returning to (\ref{7}), we finally get for $\left\Vert z\right\Vert
\leq \rho $ that
\begin{equation}
\sup_{\left\Vert x\right\Vert =r}\Rea \left\langle G(x,z),x^{\ast
}\right\rangle \leq r\left[ \rho -r\left( 1-L-\delta (\theta )\mathcal{L}%
(\theta ,r\right) )\right] \text{,}  \label{11}
\end{equation}%
where
\begin{equation}
\delta (\theta ):=N(\theta )-l(\theta )\geq 0  \label{12}
\end{equation}%
by Proposition \ref{prop1}. Moreover, $\delta (\theta )=0$ if and only if $%
h(x)=h^{\prime }(0)x$, $x\in \mathcal{B}$, is a restriction of the
bounded linear operator $h^{\prime }(0)$ in $X$ (see Corollary
\ref{cor(rigidity)}). So, in this case we have by (\ref{11}),
\begin{equation*}
\sup_{\left\Vert x\right\Vert =r}\Rea \left\langle G(x,z),x^{\ast
}\right\rangle \leq 0
\end{equation*}%
if
\begin{equation}
\rho \leq r(1-L),\text{ \ }r\in (0,1].  \label{13}
\end{equation}%
Thus for each $r\in(0,1]$ and $\rho (r)=r(1-L)$, the pair $(r,\rho
(r))$ constitutes Bloch radii, and $\varepsilon (r)=\frac{r}{\rho
(r)}=\frac{1}{1-L}$
is a constant function. Obviously, the function $\rho (r)$ is positive on $%
(0,1]$ and attains its maximum $\rho _{0}=1-L$ at the point
$r_{0}=1$ whenever $L<1$.

Now we assume that $\delta (\theta )> 0$, that is, $N(\theta
)>l(\theta )$. In this case
\begin{equation*}
\sup_{\left\Vert x\right\Vert =r}\Rea \left\langle G(x,z),x^{\ast
}\right\rangle \leq 0
\end{equation*}
if
\begin{equation}
\rho <\rho (r):=r(1-L-\delta (\theta )\mathcal{L}(\theta ,r)) ,
\quad \left\Vert z\right\Vert <\rho , \quad \left\Vert
x\right\Vert =r.  \label{16}
\end{equation}

It would, of course, be pertinent to look for conditions which ensure that $%
\rho (r)>0$ for some $r\in (0,1]$ and to find the maximum of this
function on this interval.

Writing down explicitly (\ref{16}), we get
\begin{equation}
\rho (r)=\frac{r}{1-r^{2}}\left( (1-L)(1-r^{2})-\delta (\theta
)2r(1-r\cos \theta )\right) .  \label{a}
\end{equation}%
As above we assume in the sequel that $L<1$. Since $\rho (0)=0$,
we have
$\rho ^{\prime }(0)=\lim_{r \rightarrow 0^{+}}\displaystyle\frac{\rho (r)%
}{r}=1-L>0.$ In addition, $\lim_{r\rightarrow 1^{-}}\rho
(r)=-\infty $ whenever $\theta \neq 0$, and for all $r\in (0,1)$,
\begin{equation*}
\rho ^{\prime \prime }(r)=\frac{4\delta (r^{3}\cos \theta
-3r^{2}+3r\cos \theta -1)}{(1-r^{2})^{3}}<\frac{-4\delta
}{(1+r)^{3}}<0.
\end{equation*}

Thus, again, the condition $L<1$ ensures that $\rho $ has a positive maximum $
\rho (r_{0})$ at some point $r_{0}\in (0,1)$.

If $\theta \neq 0$, it is clear that $r_{0}<r_{\ast }<1$, where
$r_{\ast }$ is the minimal (positive) root of the equation $\rho
(r)=0$ or, which is one and the same, of the equation
\begin{equation}
\varphi (r):=r^{2}\left( 2\delta (\theta )\cos \theta
-(1-L)\right) -2r\delta (\theta )+1-L=0.  \label{17}
\end{equation}

Since $\varphi (0)=1-L>0$ and $\varphi (1)=2\delta (\theta )(\cos
\theta -1)<0$ whenever $\theta \neq 0$, we see that if
\begin{equation}
2\delta (\theta )\cos \theta \neq 1-L,  \label{18}
\end{equation}%
then the unique root of equation (\ref{17}) in the interval (0,1) is
\begin{eqnarray}  \label{r-ast}
r_{\ast } &=&\frac{\delta (\theta )-\sqrt{\delta ^{2}(\theta
)+\left[ (1-L)-2\delta (\theta )\cos \theta \right]
(1-L)}}{2\delta (\theta )\cos \theta -1+L} \\ &=&\frac{\delta
(\theta )-\sqrt{\left[ \delta (\theta )-(1-L)\right] ^{2}+2\delta
(\theta )(1-L)(1-\cos \theta )}}{2\delta (\theta )\cos \theta
-(1-L)}\text{, \ \ \ }\theta \neq 0,  \notag
\end{eqnarray}%
because the numerator and denominator of the last expression have
the same sign.

Finally, if
\begin{equation}
2\delta (\theta )\cos \theta =1-L  \label{19},
\end{equation}%
we see that%
\begin{equation}
\rho (r)=\frac{r}{1-r^{2}}(1-L-2r\delta (\theta )) \label{funcro}
\end{equation}%
and
\begin{equation}
r_{\ast }=\frac{1-L}{2\delta (\theta )}=\cos \theta <1 \label{20}
\end{equation}%
whenever $\theta \neq 0$.

\begin{proposition}
\label{bl-ra} Let $h:\mathcal{B}\rightarrow X$ be holomorphic with
$h(0)=0$, let the functions $N(\theta )$, $l(\theta )$ and $L(\theta
)$ be defined by (\ref{2}) and (\ref{3}), and let $\rho (r)$ be
defined by (\ref{a}). Then for all $r\in (0,r_{\ast })$, where
$r_{\ast }$ is defined by (\ref{r-ast}) (or (\ref{20}) in the case of
(\ref{19})), the numbers $r$ and $\rho (r)$ are Bloch radii for the
mapping $F=I-h.$ Moreover, the equation $\rho ^{\prime }(r)=0$ has
a unique solution $r_{0}\in (0,r_{\ast })$, and so the function
$\rho (r)$ attains its maximum $\rho _{0}$ at this interior point
$r_{0}\in (0,r_{\ast })\subset \left( 0,1\right).$
\end{proposition}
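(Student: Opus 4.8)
The plan is to verify that the statement is essentially an assembly of the analysis already carried out in the preamble preceding Proposition \ref{bl-ra}, together with Lemma \ref{lem1}. The two assertions to establish are (a) that for each $r\in(0,r_\ast)$ the pair $(r,\rho(r))$ constitutes Bloch radii for $F=I-h$, and (b) that $\rho$ attains an interior maximum at a unique critical point $r_0\in(0,r_\ast)$. The key point is that all the hard estimates have already been done: the growth bound \eqref{8} coming from Proposition \ref{propN} feeds into \eqref{7} to yield \eqref{11}, which shows that whenever $\|z\|<\rho(r)$ with $\rho(r)$ given by \eqref{16} (equivalently \eqref{a}), one has $\sup_{\|x\|=r}\Rea\langle G(x,z),x^\ast\rangle<0$ for $G(x,z)=z-x+h(x)$.

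First I would invoke Lemma \ref{lem1}. The strict sign inequality \eqref{6}, valid on $\|x\|=r$ for $\|z\|<\rho(r)$, is exactly the hypothesis of that lemma (the $\limsup$ condition is implied by the strict inequality on the sphere by continuity), so for each such $z$ the equation $G(x,z)=0$, i.e. equation \eqref{1}, has a unique solution $x=x(z)\in\mathcal B_r$ depending holomorphically on $z\in\mathcal B_\rho$. This gives precisely that $F(\mathcal B_r)\supseteq\mathcal B_\rho$ and that $F^{-1}=x(\cdot)$ is a well-defined holomorphic mapping on $\mathcal B_\rho$; by the definition of Bloch radii in Section 4, the pair $(r,\rho(r))$ therefore constitutes Bloch radii. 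For this to be non-vacuous I must check $\rho(r)>0$ on $(0,r_\ast)$, which follows from $\rho(0)=0$, $\rho'(0)=1-L>0$ (using $L<1$), and the fact that $r_\ast$ is the \emph{minimal} positive root of $\rho(r)=0$ (equivalently of $\varphi(r)=0$ in \eqref{17}): $\rho$ cannot vanish or change sign before $r_\ast$, so it stays positive on $(0,r_\ast)$.

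Next I would establish the second assertion about the maximum. The computations already recorded give $\rho'(0)=1-L>0$ and $\rho''(r)<0$ for all $r\in(0,1)$, so $\rho$ is strictly concave on $(0,1)$ with positive initial slope. A strictly concave function with $\rho'(0)>0$ has a monotonically strictly decreasing derivative, hence $\rho'$ has at most one zero in $(0,1)$; since $\rho$ is positive at the left endpoint with positive slope but vanishes at $r_\ast$, the derivative must change sign somewhere in $(0,r_\ast)$, yielding a unique critical point $r_0\in(0,r_\ast)$ at which $\rho$ attains its maximum $\rho_0$. In the degenerate case \eqref{19}, where $\rho$ takes the explicit form \eqref{funcro} and $r_\ast=\cos\theta$ by \eqref{20}, the same concavity argument applies verbatim.

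The only genuine subtlety — and the step I would treat most carefully — is the passage from the strict inequality $\sup_{\|x\|=r}\Rea\langle G(x,z),x^\ast\rangle<0$ to the precise form of Lemma \ref{lem1}, whose hypothesis is phrased via $\limsup_{s\to 1^-}\Rea\langle G(su,z),u^\ast\rangle<0$ for $\|u\|=r$. One must confirm that the boundary estimate \eqref{11}, derived through Proposition \ref{propN}, indeed transfers to this radial $\limsup$ condition; this is where the continuity of $h$ up to the relevant sphere and the uniformity of the bound in \eqref{11} over $\|x\|=r$ are used. Everything else is bookkeeping: substituting \eqref{8} into \eqref{7}, reading off \eqref{a}, and invoking the already-verified sign of $\rho''$ and the root structure of \eqref{17}. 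I would therefore present the proof as a short synthesis, citing Lemma \ref{lem1} for existence, uniqueness and holomorphy of the local inverse, and citing the preceding calculus discussion for positivity of $\rho$ on $(0,r_\ast)$ and for the unique interior maximum.
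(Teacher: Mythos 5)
Your proposal is correct and follows essentially the same route as the paper, which in fact prints no separate proof of Proposition \ref{bl-ra} because the statement is just a summary of the immediately preceding discussion: Lemma \ref{lem1} applied to $G(x,z)=z-x+h(x)$ with the bound \eqref{11} giving the Bloch radii, and the computations $\rho'(0)=1-L>0$, $\rho''<0$, $\rho(r_\ast)=0$ giving the unique interior maximum. Your only slip is cosmetic ($\rho(0)=0$ rather than ``positive at the left endpoint''), and Rolle's theorem on $[0,r_\ast]$ plus strict concavity closes the argument exactly as you indicate.
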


To find some explicit estimates for $r_{0}$ and $\rho _{0}$, we
exploit again Proposition \ref{propN}, but using another approach in order
to simplify our calculations. Namely, applying Proposition \ref{propN}
with $R=1$, we see
that for any fixed $s\in (0,r_{\ast })$,
\begin{equation*}
N_{s}(h) =\sup_{\left\Vert x\right\Vert =s}\Rea \left\langle
h(x),x^{\ast }\right\rangle   \label{24} \leq s^{2}\left[ L+\mathcal{L(\theta
},s)\delta (\theta )\right],
\end{equation*}%
where $\delta (\theta )=N(\theta )-l(\theta )$, where $N(\theta )$,
$l(\theta ) $ and $L$ are given by (\ref{2}) and (\ref{3}).

On the other hand, if we set $R=s$  in Remark \ref{lem},  we get that for any $x$ such that
$\left\Vert x\right\Vert =r<s$,
\begin{eqnarray}  \label{a27}
\Rea \left\langle h(x),x^{\ast }\right\rangle &\leq
&r^{2}\sup_{\left\Vert u\right\Vert =1}\Rea \left\langle h^{\prime
}(0)u,u^{\ast }\right\rangle \cdot \left(1-\mathcal{L}_s(0,r)\right) +\frac{%
r^{2}}{s^{2}}\mathcal{L}_{s}(0,r)N_{s}(h)  \notag \\
&\leq &r^{2}\left[ L\left( 1-\frac{2r}{s+r}\right) +\frac{1}{s^{2}}\frac{2r}{%
s+r}\cdot N_{s}(h)\right]  \notag \\
&=&r^{2}\left[ L\frac{s-r}{s+r}+\frac{2r}{s+r}\cdot \frac{N_{s}(h)}{s^{2}}%
\right]  \notag \\ &\leq &r^{2}\left[
L\frac{s-r}{s+r}+\frac{2r}{s+r}\left[ L+\mathcal{L(\theta
},s)\delta (\theta )\right] \right].
\end{eqnarray}

To simplify further our calculations we denote
\begin{equation}
\begin{split}
&K \,(=K(\theta ,s)):=L+\mathcal{L(\theta },s)\delta (\theta )=L+Q,
\label{26} \\ &Q \, (=Q(\theta ,s)):=\mathcal{L(\theta },s)\delta
(\theta )>0,  \notag
\end{split}
\end{equation}
and set $R=s<1$.

Then (\ref{a27}) becomes
\begin{equation}
\sup_{\left\Vert x\right\Vert =r}\Rea \left\langle h(x),x^{\ast
}\right\rangle \leq r^{2}\left[ \frac{s-r}{s+r}L+\frac{2r}{s+r}K\right] =%
\frac{r^{2}}{s+r}\left[ (s-r)L+2rK\right].  \label{27}
\end{equation}

Now we again consider inequality (\ref{7}) for $r\in (0,s]$, taking
into account (\ref{27}). We then obtain
\begin{equation*}
\sup_{\left\Vert x\right\Vert =r}\Rea \left\langle G(x,z),x^{\ast
}\right\rangle \leq \left\Vert z\right\Vert \cdot r-r^{2}+\frac{r^{2}}{s+r}%
\left[ (s-r)L+2rK\right] <0
\end{equation*}%
whenever
\begin{equation}
\left\Vert z\right\Vert <\rho _{s}(r)=\frac{r^{2}(1+L-2K)+rs(1-L)}{s+r}=%
\frac{Ar^{2}+Br}{s+r},  \label{28}
\end{equation}%
where
\begin{equation*}
A(=A(s))=1+L-2K=1+L-2(L+Q)=1-L-2Q
\end{equation*}%
and%
\begin{equation}
B \, (=B(s))=s(1-L).  \label{29}
\end{equation}%
It is important to observe that for each fixed $s\in \left(
0,r_{\ast }\right)$, the following relations hold:

(i) $\rho (s)\geq \rho _{s}(r),$

\noindent while

(ii) $\rho (s)=\rho _{s}(s).$

However, the investigation of the function $\rho _{s}(r)$, $r\in
(0,s],$ in order to find its maximum value can be done explicitly via
quadratures.

Consider the function
\begin{equation*}
A(s)=1-L-2Q(s)=\frac{(4\delta (\theta )\cos \theta
-(1-L))s^{2}-4\delta (\theta )s+(1-L)}{1-s^{2}}.
\end{equation*}%
Note that $A(0)=1-L>0$, while
\begin{equation}
\begin{split}
A(r_{\ast })& =\frac{(1-L)(1-r_{\ast }^{2})-4\delta (\theta
)r_{\ast }(1-r_{\ast }\cos \theta )}{1-r_{\ast }^{2}} \\ &
=\frac{(1-L)(1-r_{\ast }^{2})-2\delta (\theta )r_{\ast }(1-r_{\ast
}\cos \theta )}{1-r_{\ast }^{2}}-\frac{2\delta (\theta )r_{\ast
}(1-r_{\ast }\cos \theta )}{1-r_{\ast }^{2}} \\ & =\frac{\rho
(r_{\ast })}{r_{\ast }}-\frac{2\delta (\theta )r_{\ast }(1-r_{\ast
}\cos \theta )}{1-r_{\ast }^{2}}=-\frac{2\delta (\theta )r_{\ast
}(1-r_{\ast }\cos \theta )}{1-r_{\ast }^{2}}<0,
\end{split}%
\end{equation}%
where $r_{\ast }$ is the unique positive root of the equation
$\rho (r)=0$ in $(0,1)$ defined by (\ref{r-ast}). So the minimal
positive root
\begin{equation*}
s_{\ast }:=\frac{2\delta (\theta )-\sqrt{4\delta (\theta
)^{2}-\left( 4\delta (\theta )\cos \theta -(1-L)\right)
(1-L)}}{4\delta (\theta )\cos \theta -(1-L)}
\end{equation*}%
of the equation $A(s)=0$ belongs to $(0,r_{\ast })$. Moreover, for
$s\in (0,s_{\ast })$, $A(s)>0$ or, which is one and the same,
$Q<\frac{1-L}{2}$, and for $%
s\in (s_{\ast },r_{\ast })$, $A(s)<0$ $\left(
Q>\frac{1-L}{2}\right) $.

As we have mentioned above, for all $s\in (0,1)$, we have $\rho
(s)=\rho _{s}(s)$. In particular, $\rho (s_{\ast })=\rho _{s_{\ast
}}(s_{\ast })=\displaystyle\frac{ B(s_{\ast
})}{2}=\displaystyle\frac{s_{\ast }(1-L)}{2}>0$.

Since $A(s_{*})=0$, the inequality
\begin{equation}  \label{supre}
\sup\limits_{\|x\|=r}\Rea  \langle G(x,z),x^{*}\rangle <0
\end{equation}
holds whenever
\begin{equation*}
\|z\| <\rho _{s_{*}}(r)=\frac{s_{*}(1-L)r}{s_{*}+r}, \quad
\|x\|=r\in (0,s_{*}].
\end{equation*}
Note that in this case, $\varepsilon (r)=\displaystyle\frac{r}{\rho (r)}=\displaystyle\frac{s_{*}+r}{%
B(s_{*})}$ is an affine function.

Since the derivative
\begin{equation*}
\rho _{s_{\ast }}^{\prime }(r)=\frac{2s_{\ast }^{2}Q}{(s_{\ast }+r)^{2}}%
>0\quad (Q>0),
\end{equation*}%
the increasing function $\rho _{s_{\ast }}(r)$, $r\in
\lbrack 0,s_{\ast }]$, attains its maximum on $[0,s_{\ast }]$ at
the point $s_{\ast }$, that is,
\begin{equation*}
\max\limits_{r\in \lbrack 0,s_{\ast }]}\rho _{s_{\ast }}(r)=\rho
_{s_{\ast }}(s_{\ast })=\rho (s_{\ast })=\frac{(1-L)s_{\ast }}{2},
\end{equation*}%
and the pair $\left( s_{\ast },\frac{(1-L)s_{\ast }}{2}\right) $
constitutes Bloch radii for $F$, with $\varepsilon (s_{\ast
})=\frac{2}{1-L}$.

Now we fix $s\in (0,s_{\ast })$. In this case, inequality
(\ref{supre}) holds whenever
\begin{equation*}
\Vert z\Vert <\rho _{s}(r)=\frac{A(s)r^{2}+B(s)r}{s+r},\quad r\in
(0,s).
\end{equation*}%
The derivative
\begin{equation*}
\frac{d\rho
_{s}(r)}{dr}=\frac{A(s)(s+r)^{2}+2s^{2}Q}{(s+r)^{2}}>0,\quad r\in
(0,s),
\end{equation*}%
because $A(s)>0$ for $s\in (0,s_{\ast })$. Hence, $\rho $ is increasing on $%
[0,s]$, attains its maximum on $[0,s]$ at the point $s$, that is,
\begin{equation*}
\max\limits_{r\in \lbrack 0,s]}\rho _{s}(r)=\rho _{s}(s)=\rho (s)=(1-L)s-%
\frac{2\delta (\theta )s^{2}(1-s\cos \theta )}{1-s^{2}},
\end{equation*}%
and for each $s\in (0,s_{\ast })$, the pair
$\left(s,(1-L)s-\frac{2\delta (\theta )(1-s\cos \theta )}{1-s^{2}}\right)$
constitutes Bloch radii for $F$.

Next we fix $s\in (s_{\ast },r_{\ast })$ and let
\begin{equation*}
\Vert z\Vert <\rho _{s}(r)=\frac{A(s)r^{2}+B(s)r}{s+r},\quad r\in
(0,s).
\end{equation*}%
In this case, $A(s)<0$ and so the equation
\begin{equation*}
\frac{d\rho _{s}(r)}{dr}=\frac{A(s)(s+r)^{2}+2s^{2}Q}{(s+r)^{2}}=0
\end{equation*}%
or, which is one and the same,
\begin{equation*}
(s+r)^{2}=-\frac{2s^{2}Q}{A(s)},
\end{equation*}%
makes sense. It can be seen that its minimal positive solution
\begin{equation*}
r^{0}=\left( \sqrt{\frac{2Q}{2Q-(1-L)}}-1\right) s
\end{equation*}%
belongs to $(0,s)$ if and only if $Q>\frac{2}{3}(1-L)$. Since
$\rho _{s}(0)=0 $, $\left .\frac{d\rho
_{s}(r)}{dr}\right|_{r=0^{+}}=1-L>0$ and $\frac{d^{2}\rho
_{s}(r)}{dr^{2}}=- \frac{4s^{2}Q}{(s+r)^{3}}<0$ in $(0,s)$, the
function $\rho _{s}(r)$ attains its maximum on $[0,s]$ at the
point $r^{0}$, is positive and increasing on $[0,r^{0}]$, and the
pair $\left( r^{0},\rho _{s}(r^{0})\right) $ constitutes Bloch radii
for $F$.

If $Q\leq \frac{2}{3}(1-L)$, $\rho _{s}^{\prime }(r)$ does not
vanish in $ (0,s)$, and since $\rho _{s}^{\prime }(0)=1-L>0$,
$\rho _{s}$ is increasing on $[0,s]$ and attains its maximum on
$[0,s]$ at the point $s$, that is,
\begin{equation*}
\max\limits_{r\in \lbrack 0,s]}\rho _{s}(r)=\rho _{s}(s)=\rho
(s)=s(1-L-Q)\geq \frac{s}{3}(1-L)>0,
\end{equation*}%
and the pair $\left( s,s(1-L-Q(s))\right) $ constitutes Bloch radii
for $F$.

Now we summarize our conclusions in the following assertion.

\begin{proposition}
\label{bloch2}Let $F=I-h$, where $h\in
\mathrm{Hol}(\mathcal{B},X)$ with $ h(0)=0$, the functions $N(\theta
)$, $l(\theta )$ and $L(\theta )$ be defined by (\ref{2}) and
(\ref{3}), and let $L=L\left( 0\right) <1$. Then for each $s\in
\left( 0,r_{\ast }\right) ,$ where $r_{\ast }$ is defined by (\ref
{r-ast}) (or (\ref{20}) in the case of (\ref{19})), the function $\rho
_{s}(r)$ defined by (\ref{28}) is positive on the interval $\left(
0,r_{\ast }\right) $ and satisfies the conditions $\rho (s)\geq
\rho _{s}(r)$ and  $\rho (s)=\rho _{s}(s).$ Hence the pair $\left(
r,\rho _{s}(r)\right) $\ constitutes Bloch radii for $F$. Moreover, the
following assertions hold.

$\quad $ (a) if
\begin{equation*}
s_{\ast }:=\frac{2\delta (\theta )-\sqrt{4\delta (\theta
)^{2}-\left( 4\delta (\theta )\cos \theta -(1-L)\right)
(1-L)}}{4\delta (\theta )\cos \theta -(1-L)},
\end{equation*}%
then $\varepsilon (r)=\displaystyle\frac{r}{\rho _{s_{\ast
}}\left( r\right) }$ is an affine function, namely, $\varepsilon
(r)=\displaystyle\frac{ s_{\ast }+r}{s_{*} (1-L)}$;

$\quad$ (b) if $s\in (0,s_{*})$, then $\rho _{s}(r)$, $r\in (0,s]$,
is strictly increasing and hence,
\begin{equation*}
\max\limits_{r\in (0,s]}\rho _{s}(r)=\rho _{s}(s)=\rho (s) =(1-L)s-\frac{%
2\delta (\theta ) s^{2}(1-s\cos \theta )}{1-s^{2}};
\end{equation*}

$\quad$ (c) if $s\in (s_{*},r_{*})$, then
\begin{equation*}
\max\limits_{r\in (s_{*},s]}\rho _{s}(r)=%
\begin{cases}
\rho _{s}(r^{0}), & Q>\displaystyle\frac{2}{3}(1-L) \\ \quad &  \\
\rho (s), & Q\leq \displaystyle\frac{2}{3}(1-L)
\end{cases}%
,
\end{equation*}
$\quad$ $\quad$ $\quad$ where $r^{0}=\left(\sqrt{\displaystyle\frac{2Q}{%
2Q-(1-L)}}-1\right)s$.
\end{proposition}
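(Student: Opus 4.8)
The plan is to regard this proposition as the packaging of the explicit one–variable analysis carried out just above, and to organize its proof around Lemma~\ref{lem1} together with the refined numerical–range bound of Remark~\ref{lem}. First I would recall that, writing $G(x,z)=z-x+h(x)$, solving $F(x)=x-h(x)=z$ inside $\mathcal{B}_r$ is the same as locating a null point of $G(\cdot,z)$; by Lemma~\ref{lem1} it therefore suffices to exhibit, for a given $r$, a radius $\rho=\rho(r)$ so that the strict sign condition \eqref{6} holds whenever $\|z\|<\rho$. Via the splitting \eqref{7}, the whole task reduces to an upper bound for $\sup_{\|x\|=r}\Rea\langle h(x),x^{\ast}\rangle$.

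The key estimate is obtained by applying Remark~\ref{lem} with the intermediate radius $s$ in place of $R$: this yields inequality \eqref{27}, and inserting it into \eqref{7} shows that \eqref{supre} holds as soon as $\|z\|<\rho_s(r)$, with $\rho_s(r)$ as in \eqref{28}. Hence $(r,\rho_s(r))$ constitutes Bloch radii for $F=I-h$, which is the principal assertion. The two accompanying relations are then read off the closed forms: setting $r=s$ in \eqref{28} and comparing with \eqref{a} gives $\rho(s)=\rho_s(s)$, and the inequality $\rho(s)\ge\rho_s(r)$ is the observation recorded before the statement.

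It remains to study the scalar function $\rho_s(r)$ on $(0,s]$, whose monotonicity is governed entirely by the sign of $A(s)=1-L-2Q(s)$ through the derivative identity $\rho_s'(r)=[A(s)(s+r)^2+2s^2Q]/(s+r)^2$. The crux is the sign analysis of $A$: one checks $A(0)=1-L>0$ and, by substituting the closed form \eqref{r-ast} of $r_\ast$ back into $A$ and exploiting $\rho(r_\ast)=0$, that $A(r_\ast)=-2\delta(\theta)r_\ast(1-r_\ast\cos\theta)/(1-r_\ast^2)<0$; consequently $A$ has a unique zero $s_\ast\in(0,r_\ast)$, namely the displayed value. At $s=s_\ast$, where $A=0$, the bound reduces to $\rho_{s_\ast}(r)=s_\ast(1-L)r/(s_\ast+r)$, so $\varepsilon(r)=r/\rho_{s_\ast}(r)$ is the affine function of part~(a). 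For $s\in(0,s_\ast)$ one has $A(s)>0$, hence $\rho_s'>0$, so $\rho_s$ is increasing and its maximum is $\rho_s(s)=\rho(s)$, giving (b). For $s\in(s_\ast,r_\ast)$ one has $A(s)<0$, so $\rho_s$ is concave with $\rho_s'(0)=1-L>0$; solving $\rho_s'(r)=0$ produces the interior critical point $r^0$, which lies in $(0,s)$ exactly when $Q>\frac{2}{3}(1-L)$, yielding the two alternatives in (c).

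I expect the only genuine obstacle to be the evaluation of $A(r_\ast)$, since it hinges on feeding the explicit root \eqref{r-ast} into $A(s)$ and recognizing the cancellation that leaves the manifestly negative expression above; once that sign is secured, the remaining work is routine single–variable calculus applied to $\rho_s$. I would also dispatch the degenerate case \eqref{19} (when $2\delta(\theta)\cos\theta=1-L$) separately, replacing \eqref{r-ast} by the simpler value \eqref{20} for $r_\ast$, after which the identical monotonicity argument applies verbatim.
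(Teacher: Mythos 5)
Your proposal follows the paper's own argument essentially verbatim: the reduction via Lemma \ref{lem1} and the splitting \eqref{7}, the two-step estimate through the intermediate radius $s$ (Remark \ref{lem} with $R=s$, combined with Proposition \ref{propN} to bound $N_{s}(h)$, which yields \eqref{27} and \eqref{28}), the sign analysis of $A(s)$ with $A(0)=1-L>0$ and $A(r_{\ast})<0$ deduced from $\rho(r_{\ast})=0$, and the three-case single-variable calculus on $\rho_{s}$. The paper presents this proposition precisely as a summary of that preceding computation, so your route coincides with the authors' and is correct.
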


Note that the estimate $\displaystyle\frac{(1-L)s_{\ast}}{2}\leq
\rho _{0}$ (where $\rho _{0}$ is the Bloch radius given by
Proposition \ref{bl-ra}) is sharp as the following example shows.

\begin{example}
For $\theta =\displaystyle\frac{\pi}{3}$, $L=0$ and $\delta
(\theta )=1$, we
have $\rho (r)=\displaystyle\frac{r(1-2r)}{1-r^{2}}$ and so $r_{\ast}=%
\displaystyle\frac{1}{2}$.

Then
\begin{equation*}
\rho _{s}(r)=\frac{Ar^{2}+Br}{s+r},
\end{equation*}
where $A=1-2Q$ with $Q=\displaystyle\frac{s(2-s)}{1-s^{2}}$. Hence $A=%
\displaystyle\frac{s^{2}-4s+1}{1-s^{2}}$ and the unique zero of
$A$ in $(0,1) $ is
$s_{\ast}=2-\sqrt{3}<r_{\ast}=\displaystyle\frac{1}{2}$.

For $0<s<s_{\ast}$, $A(s)>0$ $\left(0<Q<\frac{1}{2}\right)$ and,
in this case,
\begin{equation*}
\rho_{s}^{\prime }(r)=\frac{A(s+r)^{2}+2s^{2}Q}{(s+r)^{2}}\geq 0.
\end{equation*}
Consequently, the function $\rho _{s}$ is increasing on $(0,s)$
and
\begin{equation*}
\max \limits_{r\in[0,s]}\rho _{s}(r)=\rho _{s}(s)=\rho (s) =\frac{s(1-2s)}{%
1-s^{2}}.
\end{equation*}

Furthermore,
\begin{equation*}
\rho ^{\prime }(r)=\frac{(r-2)^{2}-3}{(1-r^{2})^{2}}
\end{equation*}%
and so the unique maximum of $\rho $ in $(0,1)$ is achieved at $r_{0}=2-%
\sqrt{3}$ and equals
\begin{equation*}
\rho (2-\sqrt{3})=\frac{2-\sqrt{3}}{2}.
\end{equation*}%
On the other hand, it is easy to calculate that $s_{\ast }=2-\sqrt{3}$ and $%
\rho _{s_{\ast }}(s_{\ast })=\frac{2-\sqrt{3}}{2}=\rho
(2-\sqrt{3})$.
\end{example}

\begin{remark}
\label{rem3}

If $\theta =0$ the factor $e^{i\theta }- \mathcal{L}(\theta ,r)=1-\frac{2r}{1+r%
}=\frac{1-r}{1+r}$ in (\ref{8}) is real and this estimate
can, in fact, be replaced with a sharper one than (\ref{8}), namely,
\begin{equation}
\sup_{\left\Vert x\right\Vert =r}\Rea \left\langle h(x),x^{\ast
}\right\rangle \leq r^{2}\left[
\frac{1-r}{1+r}L+N\frac{2r}{1+r}\right] \text{,}  \label{10}
\end{equation}
so that we do not need in this case the value%
\begin{equation*}
l(0)=\inf_{\left\Vert x\right\Vert =1}\Rea \left\langle h^{\prime
}(0)x,x^{\ast }\right\rangle \leq L\text{.}
\end{equation*}
In its turn formula (\ref{a}) becomes
\begin{eqnarray*}
\rho (r) &=&\frac{r}{1-r^{2}}\left( (1-L)(1-r^{2})-\delta (\theta
) 2r(1-r)\right) = \\ &=&\frac{r}{1+r}\left( (1-L)(1+r)-2\delta
(\theta ) r\right) \text{.}
\end{eqnarray*}
In this situation $\rho (0)=0$, $\rho (1)=1-L-\delta (\theta ) =1-N$ and $%
\rho ^{\prime }(0)=1-L>0$.

So, $\rho (r)$ vanishes in $(0,1)$ if and only if
\begin{equation*}
N>1.
\end{equation*}

Otherwise, if
\begin{equation*}
N\leq 1,
\end{equation*}
then $\rho (r)\geq 0$ for all $r\in (0,1)$.

In particular, if $N\leq \displaystyle\frac{1}{2}(1+L)<1$, then
also $\rho
^{\prime }(r)>0 $ for all $r\in (0,1)$ and%
\begin{equation}
\max_{r\in \lbrack 0,1]}\rho (r)=\rho (1)=1-N>0\text{.}
\label{43}
\end{equation}
Finally, if $N=1$, then
\begin{equation*}
\rho (r)=\frac{r}{1+r}\left( (1-L)(1+r)-2(1-L)r\right) =\frac{r(1-r)}{1+r}%
(1-L)
\end{equation*}%
and%
\begin{equation}
\max_{r\in \lbrack 0,1]}\rho (r)=\rho (\sqrt{2}-1)=(\sqrt{2}-1)^{2}(1-L)%
\text{.}  \label{44}
\end{equation}
\end{remark}

In general, setting in this case $s=1$, we arrive at the following
assertion.

\begin{proposition}
\label{propA1} Let $h$ be a holomorphic mapping on the unit ball
$\mathcal{B} $ of $X$ with $h(0)=0$ and
\begin{equation*}
L=\sup_{\left\Vert x\right\Vert =1}\Rea \left\langle h^{\prime
}(0)x,x^{\ast }\right\rangle <1\text{.}
\end{equation*}
If
\begin{equation*}
N=\sup_{\left\Vert x\right\Vert <R}\Rea \left\langle h(x),x^{\ast
}\right\rangle \text{,}
\end{equation*}%
then

(i) $\ \ N\geq L$;

(ii) \ Bloch radii $r_{0}$ and $\rho _{0}$ for $h$ can be given by
\begin{equation*}
r_{0}=\left\{
\begin{array}{c}
\sqrt{\frac{2(L-N)}{1+L-2N}}-1,\text{ \ }\text{if \ }N\geq
\frac{2+L}{3}, \\
\\
1,\text{ \ otherwise}%
\end{array}%
\right.
\end{equation*}%
and%
\begin{equation*}
\rho _{0}=\left\{
\begin{array}{c}
\rho (r_{0})\text{, \ if \ }N\geq \frac{2+L}{3} \\
1-N\text{, \ otherwise.}%
\end{array}%
\right.
\end{equation*}
\end{proposition}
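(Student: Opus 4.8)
The plan is to dispatch part (i) as an immediate consequence of an inequality already on record, and to obtain part (ii) by a one–variable calculus analysis of the Bloch–radius function $\rho(r)$ produced by the $\theta=0$ branch of the preceding argument. For part (i), Propositions \ref{propT} and \ref{propN} both record the bound $L\le N_R/R^2$; specializing to $R=1$ (equivalently, using the left-hand estimate $r^2L\le M_r(0)$ of \eqref{4.60} with $h(0)=0$ and letting $r\to 1^-$) gives at once $L\le N$, which is assertion (i).

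For part (ii), I would specialize the Bloch-radius machinery to $\theta=0$ and $s=1$. By Remark \ref{rem3}, the sharp estimate \eqref{10} plays the role that $\delta(\theta)$ played before with the value $N-L$, producing the explicit function
\begin{equation*}
\rho(r)=\frac{r}{1+r}\bigl[(1-L)(1+r)-2(N-L)r\bigr]=\frac{r\bigl[(1-L)+r(1+L-2N)\bigr]}{1+r}.
\end{equation*}
The first task is to record the shape of $\rho$: one computes $\rho(0)=0$, $\rho'(0)=1-L>0$, and
\begin{equation*}
\rho'(r)=\frac{(1+L-2N)r^2+2(1+L-2N)r+(1-L)}{(1+r)^2},\qquad \rho''(r)=\frac{4(L-N)}{(1+r)^3}\le 0,
\end{equation*}
the last inequality being exactly part (i). Hence $\rho$ is concave on $[0,1]$, starts at the origin with positive slope, and so its maximizer is pinned down by the sign pattern of $\rho'$.

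Next I would solve $\rho'(r)=0$. Writing its numerator as $(1+L-2N)(r+1)^2+2(N-L)$ and setting it to zero gives $(1+L-2N)(r+1)^2=2(L-N)$, whence the unique nonnegative root
\begin{equation*}
r_0=\sqrt{\frac{2(L-N)}{1+L-2N}}-1.
\end{equation*}
A short computation shows $r_0=1$ precisely when $3N-L=2$, i.e. $N=\frac{2+L}{3}$, and since the quantity under the radical is decreasing in $N$ on the relevant range, $r_0\in(0,1]$ exactly when $N\ge\frac{2+L}{3}$. In that regime the concavity of $\rho$ makes $r_0$ the interior maximizer, so $\rho_0=\rho(r_0)$. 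When $N<\frac{2+L}{3}$ I would split on the sign of $1+L-2N$: if $N<\frac{1+L}{2}$ every coefficient in the numerator of $\rho'$ is positive, while if $\frac{1+L}{2}\le N<\frac{2+L}{3}$ the unique critical point of the concave $\rho$ exceeds $1$; in either subcase $\rho'>0$ on $(0,1)$, so $\rho$ increases up to the endpoint, giving $r_0=1$ and $\rho_0=\rho(1)=1-N$ (positive since $N<\frac{2+L}{3}<1$). Finally, by Lemma \ref{lem1} applied to $G(x,z)=z-x+h(x)$ as in \eqref{7}, the strict inequality $\sup_{\|x\|=r}\Rea\langle G(x,z),x^{\ast}\rangle<0$ valid for $\|z\|<\rho(r)$ certifies that the pairs $(r,\rho(r))$, and in particular $(r_0,\rho_0)$, are genuine Bloch radii for $F=I-h$.

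The derivative computations are routine; the one point demanding care is the case distinction, where I must reconcile the two natural thresholds $\frac{1+L}{2}$ (the sign change of $1+L-2N$) and $\frac{2+L}{3}$ (the value at which the critical point crosses $r=1$) and verify that on the entire range $\frac{1+L}{2}\le N<\frac{2+L}{3}$ the concave $\rho$ is still increasing all the way to $r=1$. This bookkeeping, together with confirming positivity of $\rho_0$ in each branch so that the output pairs are admissible, is the main obstacle.
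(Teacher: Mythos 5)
Your proposal is correct and follows essentially the same route as the paper: part (i) from the recorded bound $L\le N_R/R^2$ (equivalently the left-hand side of \eqref{4.60}), and part (ii) by maximizing the $\theta=0$, $s=1$ function $\rho(r)=\frac{r[(1-L)+r(1+L-2N)]}{1+r}$ from Remark \ref{rem3} together with Lemma \ref{lem1}. Your calculus (concavity from $N\ge L$, the critical point $r_0=\sqrt{2(L-N)/(1+L-2N)}-1$, and the threshold $N=\frac{2+L}{3}$ at which it crosses $r=1$) is exactly the bookkeeping the paper leaves implicit, and it checks out.
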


\begin{remark}
\label{remA2} If, in particular, $h^{\prime }\left( 0\right) =0$,
then Proposition \ref{propA1} coincides with Theorem 7 in
\cite{HRS}. Note, however, that the estimates in that theorem are
still true when $h^{\prime }\left( 0\right) \neq 0,$ but
$L=\sup_{\left\Vert x\right\Vert =1}\Rea  \left\langle h^{\prime
}(0)x,x^{\ast }\right\rangle =0.$
\end{remark}

\section{\protect\bigskip Radii of starlikeness and spirallikeness}

\begin{definition}
\label{spiral}Let $\mu $ be a complex number with $\Rea \mu
>0$. For a domain $\mathcal{D}$ in $X$, where $0\in\mathcal{D}$, a locally biholomorphic mapping $f:\mathcal{D}\to
X$, where $f\left( 0\right) =0, \mathbb{\ }$ is said to be $\mu
$-spirallike on $\mathcal{D}$ if for each $ y\in f\left(
\mathcal{D}\right) $ and $t\geq 0$, the curve $\exp \left\{ -\mu
t\right\} y$ is contained in $f\left( \mathcal{D}\right) .$ If, in
particular, $ \mu $ is a positive real number, then $f$ is said to
be starlike on $\mathcal{D }$.
\end{definition}

Below we discuss the following two problems.

1. \emph{Let }$\mu \in \mathbb{C} $\emph{\ with }$\Rea \mu
>0$\emph{\ and }$\arg \mu \in \left( 0,\frac{ \pi }{2}\right)
$\emph{\ be given. Let }$\mathcal{B\ }$\emph{be the open unit ball
in }$X$\emph{\ and let }$f$\emph{\ }$:\mathcal{B}\to X,$\emph{ \
where }$f\left( 0\right) =0,$\emph{\ be a }$\mu $\emph{-spirallike
mapping on }$ \mathcal{B}$.\emph{\ Find }$r\in (0,1)$\emph{\
(depending on }$\mu $\textit{)}\emph{\  such that }$f$\emph{\ is
starlike on the ball }$\mathcal{B}_{r}.$

2.\textit{\ Conversely, let }\emph{\ }$f$\emph{\ }$:\mathcal{B}\to
X,$ \textit{\ where }$f\left( 0\right) =0,$\emph{\ be a
}\textit{starlike}\emph{\ mapping on }$\mathcal{B}$.
\textit{Given}\emph{\ }$\mu \in \mathbb{C} $\emph{\ with }$\Rea
\mu >0$\emph{\ and }$\arg \mu \in \left( -\frac{ \pi
}{2},\frac{\pi }{2}\right) $, find $r\in \left( 0,1\right) $ such
that $f$ is $\mu $\textit{-spirallike on }$\mathcal{B}_{r}$.

To solve these two problems we first observe that a locally
biholomorphic mapping $f:\mathcal{D}\to X$, where $f\left(
0\right) =0,\mathbb{\ }$ is $\mu $-spirallike on $\mathcal{D}$ if
and only if it satisfies the following differential equation:
\begin{equation}
\mu f\left( x\right) =f^{\prime }\left( x\right) h\left( x\right)
, \label{spir}
\end{equation}%
where $h:\mathcal{D}\to X$ is a semi-complete vector field on $
\mathcal{D}$ (see, for example, \cite{ReichS}).

Since \emph{\ }$f\left( 0\right) =0$ and $f^{\prime }\left(0\right)$
is an invertible linear operator, we get that \emph{\ }
\begin{equation}
h\left( 0\right) =0\text{  and  }h^{\prime }\left( 0\right) =\mu I.
\label{initial}
\end{equation}

Also, it is known that if $\mathcal{D}$ is a convex domain in $X,$
then the set of holomorphic semi-complete vector fields is a real
cone. Therefore, in this case we can set without loss of
generality $\left\vert \mu \right\vert =1.$ Thus, setting $\theta
=\arg \mu $ $\left( \left\vert \mu \right\vert =1\right) $, we can
reformulate our problem as follows.

Let $f:\mathcal{B}\to X$ be a
locally biholomorphic mapping on $\mathcal{B}$ such that $f\left( 0\right) =0$  which satisfies
equation (\ref{spir}) with $\left\vert \mu \right\vert =1$,
$\theta =\arg \mu \in \left( 0,\frac{\pi }{2}\right)$, and let $h$
$:\mathcal{B}\to X$ satisfy
\begin{equation}
\Rea \, \langle h(x),\text{ }x^{\ast }\rangle \geq 0,\text{ } x\in
\mathcal{B}, \; x^{\ast}\in J(x)\text{.}  \label{genin1}
\end{equation}

\emph{Find }$r\in (0,1)$\emph{\ (depending on }$\theta
$\textit{)}\emph{\ and a mapping }$h_{1}:\mathcal{B}_{r}\to X$
\emph{with}
\begin{equation}
\Rea \, \langle h_{1}(x),\text{ }x^{\ast }\rangle \geq 0,\text{
}x\in \mathcal{B}_{r}\text{,} \; x^{\ast}\in J(x), \label{genin2}
\end{equation}%
\emph{such that} $f$\emph{\ also satisfies the equation }%
\begin{equation*}
f\left( x\right) =f^{\prime }\left( x\right) h_{1}\left( x\right)
\end{equation*}%
\emph{whenever} $x\in \mathcal{B}_{r}.$

It is clear that due to the uniqueness property of holomorphic
solutions of differential equations, the mapping $h_{1}$ must
equal $e^{-i\theta }h$, whence we have that for all $x\in
\mathcal{B}$ and $ x^{\ast}\in J(x)$,
\begin{equation*}
\Rea \, \langle e^{i\theta }h_{1}(x),\text{ }x^{\ast }\rangle
\geq 0.
\end{equation*}%
Thus to solve our problem we can use the estimates obtained in
Proposition \ref{prop1}.

Indeed, by Proposition \ref{prop1}, replacing $h$ by $h_{1}$ and
setting $R=1$ on the left-hand side of (\ref{4.60}), for all $x\in
\mathcal{B}$ such that $\left\Vert x\right\Vert =r<1$ it holds
\begin{equation}
\Rea \,\langle h_{1}(x),\text{ }x^{\ast }\rangle \geq
r^{2}\left( l_{1}(0)+\mathcal{L} _{1}(\theta )\left(
m_{1}(1,\theta )-L_{1}(\theta )\right) \right),  \label{starin}
\end{equation}
where
\begin{equation*}
l_{1}(0)=\inf_{\left\Vert u\right\Vert =1}\Rea \left\langle
h_{1}^{\prime }(0)u,u^{\ast }\right\rangle =1,
\end{equation*}
because by (\ref{initial}), $h_{1}^{\prime }(0)=e^{-i\theta
}h^{\prime }\left( 0\right) =I$. Moreover, by (\ref{genin1}),
\begin{equation*}
m_{1}(r,\theta )=\inf_{\left\Vert x\right\Vert =r}\Rea
\left\langle e^{i\theta }h_{1}(x),x^{\ast }\right\rangle
=\inf_{\left\Vert x\right\Vert =r}\Rea \left\langle h(x),x^{\ast
}\right\rangle \geq 0.
\end{equation*}%
\begin{equation*}
L_{1}(\theta )=\sup_{\left\Vert u\right\Vert =1}\Rea \left\langle
e^{i\theta }h_{1}^{\prime }(0)u,u^{\ast }\right\rangle =\cos
\theta
\end{equation*}%
and%
\begin{equation*}
\mathcal{L} \left( \theta \right) =\frac{2r\left( 1-r\cos \theta
\right) }{ 1-r^{2}}.
\end{equation*}%
So, inequality (\ref{starin}) holds as soon as
\begin{equation*}
\Rea \, \langle h_{1}(x),\text{ }x^{\ast }\rangle \geq
r^{2}\left( 1- \frac{2r\left( 1-r\cos \theta \right)
}{1-r^{2}}\cos \theta \right) :=r^{2}\varphi \left( r,\theta
\right)
\end{equation*}
for $x\in \mathcal{B}$, $\left\Vert x\right\Vert =r<1$, $
x^{\ast}\in J(x)$ and $0<\theta <\frac{ \pi }{2}$.

Now calculations show that $\varphi \left( r,\theta \right) \geq
0$ if and only if
\begin{equation*}
r\leq r^{\ast }\left( \theta \right) =\left[ \sqrt{2}\cos \left(
\theta - \frac{\pi }{4}\right) \right] ^{-1}<1,
\end{equation*}%
whenever $\left\vert \theta \right\vert <\frac{\pi }{2}.$

Thus we have proven the following result.

\begin{theorem}
\label{star-radius}Let $\mathcal{B}$ be the open unit ball in $X$
and let $ f $ $:\mathcal{B}\to X,$  $f\left( 0\right) =0$, be a
$\mu $-spirallike
mapping on\emph{\ }$\mathcal{B}$ with $\mu =e^{i\theta },$\ $-\frac{\pi }{2}%
<\theta <\frac{\pi }{2}$\emph{. }Then $f$ is starlike on the ball $\mathcal{B%
}_{r}$ for each $r\leq r^{\ast }\left( \theta \right) =\left[
\sqrt{2}\cos \left( |\theta |-\frac{\pi }{4}\right) \right]
^{-1}<1.$
\end{theorem}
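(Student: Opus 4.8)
The plan is to convert the assertion into a single scalar inequality, via the differential characterization of spirallikeness, and then to read off the threshold from the lower bound of Proposition \ref{prop1}. Because $f$ is $\mu$-spirallike with $\mu=e^{i\theta}$, it satisfies equation (\ref{spir}), $\mu f(x)=f'(x)h(x)$, for a field $h$ obeying the accretivity condition (\ref{genin1}), and the normalization $f(0)=0$ with $f'(0)$ invertible forces the initial data (\ref{initial}): $h(0)=0$ and $h'(0)=\mu I$. Now $f$ is starlike on $\mathcal{B}_r$ exactly when it solves the same equation with $\mu$ replaced by $1$; by uniqueness of holomorphic solutions of the associated ODE the relevant field must be $h_1:=e^{-i\theta}h$, and starlikeness on $\mathcal{B}_r$ is equivalent to the accretivity (\ref{genin2}) of $h_1$, i.e. to $\Rea\langle h_1(x),x^*\rangle\ge 0$ for $\|x\|=r$. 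Thus the whole problem is to find the largest $r$ making this last inequality hold. I expect this bookkeeping to be the only delicate part; once the reduction is secured, the rest is mechanical.

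Next I would apply the lower estimate in (\ref{4.6a}) of Proposition \ref{prop1} to $h_1$ with $R=1$, and observe that every constant becomes explicit. Since $h_1'(0)=e^{-i\theta}\mu I=I$, we get $l_1(0)=\inf_{\|u\|=1}\Rea\langle u,u^*\rangle=1$ and $L_1(\theta)=\sup_{\|u\|=1}\Rea\langle e^{i\theta}u,u^*\rangle=\cos\theta$; the hypothesis (\ref{genin1}) gives $m_1(1,\theta)=\inf_{\|x\|<1}\Rea\langle h(x),x^*\rangle\ge 0$; and $\mathcal{L}(\theta,r)=\frac{2r(1-r\cos\theta)}{1-r^2}$. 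Dropping the nonnegative term $m_1$ (its coefficient $\mathcal{L}(\theta,r)$ is positive) leaves the clean bound $\Rea\langle h_1(x),x^*\rangle\ge r^2\bigl(1-\mathcal{L}(\theta,r)\cos\theta\bigr)=:r^2\varphi(r,\theta)$, so it suffices to guarantee $\varphi(r,\theta)\ge 0$.

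Finally I would solve $\varphi(r,\theta)\ge 0$ by clearing the positive denominator $1-r^2$: this turns it into $r^2\cos 2\theta-2r\cos\theta+1\ge 0$. Its discriminant is $4(\cos^2\theta-\cos 2\theta)=4\sin^2\theta$, and since $\cos 2\theta=(\cos\theta-|\sin\theta|)(\cos\theta+|\sin\theta|)$ the smaller positive root simplifies to $\frac{1}{\cos\theta+|\sin\theta|}=[\sqrt{2}\cos(|\theta|-\frac{\pi}{4})]^{-1}=r^*(\theta)$. A short sign check (splitting the cases $\cos 2\theta>0$ and $\cos 2\theta<0$, with the parabola positive at $r=0$) confirms that $\varphi(r,\theta)\ge 0$ precisely on $[0,r^*(\theta)]$, while the evenness of $\varphi$ in $\theta$ accounts for the $|\theta|$ in the final formula. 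This yields $\Rea\langle h_1(x),x^*\rangle\ge 0$, hence starlikeness of $f$ on $\mathcal{B}_r$, for all $r\le r^*(\theta)$, as claimed.
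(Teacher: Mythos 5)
Your proposal is correct and follows essentially the same route as the paper: reduce starlikeness on $\mathcal{B}_r$ to the accretivity of $h_1=e^{-i\theta}h$ via the differential characterization \eqref{spir} and the normalization \eqref{initial}, apply the lower bound in \eqref{4.6a} of Proposition \ref{prop1} with $R=1$ (using $l_1(0)=1$, $L_1(\theta)=\cos\theta$, $m_1\geq 0$), and determine where $\varphi(r,\theta)=1-\mathcal{L}(\theta,r)\cos\theta\geq 0$. Your explicit reduction to the quadratic $r^2\cos 2\theta-2r\cos\theta+1\geq 0$ and the factorization of $\cos 2\theta$ is just a spelled-out version of the "calculations show" step in the paper, and it checks out.
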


\begin{remark}
\label{goluzin}The classical one-dimensional result of Grunsky
(see, for example, \cite{GG}) asserts that any univalent function
on the open unit disc is starlike on the disc centered at the
origin with radius $r_{\ast }=\tanh \frac{\pi }{4}\simeq 0.65.$ It
is clear that in the special case of spirallike functions we have
obtained a better estimate since $\min r^{\ast
}\left( \theta \right) =r^{\ast }\left( \frac{\pi }{4}\right) =\frac{1}{%
\sqrt{2}}\simeq 0.71>$ $r_{\ast }.$ In the one-dimensional case
this estimate was obtained by Robertson \cite{rob}.
\end{remark}

To solve the second problem described above we just use Theorem
\ref{hol} and Remark \ref{Spiral} to obtain the following result.

\begin{theorem}
\label{Spiral2}Let $\mathcal{B}$ be the open unit ball in $X$ and
let $f$ $: \mathcal{B}\to X,$ $f\left( 0\right) =0$, be a starlike
mapping on\emph{ \ }$\mathcal{B}$. Then for each $-\frac{\pi
}{2}<\theta <\frac{\pi }{2}$ and for each $0<$ $r\leq r\left(
\theta \right) =\frac{1-\left\vert \sin \theta \right\vert }{\cos
\theta }<1$, the mapping $f$ is $\mu $-spirallike on the ball
$\mathcal{B}_{r}$  with $\mu =e^{i\theta }$.
\end{theorem}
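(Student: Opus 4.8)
The plan is to reduce the statement to a single sign condition on a numerical range and then read that condition off from the estimate already proved inside Theorem~\ref{hol}. First I would use the characterization \eqref{spir}--\eqref{initial}: since $f$ is starlike on $\mathcal{B}$ with $f(0)=0$, there is a (semi-complete) generating vector field $g:\mathcal{B}\to X$ with $g(0)=0$, $g'(0)=I$ and $\Rea\langle g(x),x^{\ast}\rangle\geq 0$ for all $x\in\mathcal{B}$, $x^{\ast}\in J(x)$, such that $f(x)=f'(x)g(x)$. By the uniqueness of holomorphic solutions of \eqref{spir}, the assertion that $f$ is $\mu$-spirallike on $\mathcal{B}_{r}$ with $\mu=e^{i\theta}$ is equivalent to saying that the field forced by $\mu f=f'\tilde h$, namely $\tilde h=e^{i\theta}g$, is again generating on $\mathcal{B}_{r}$; that is, that $\Rea\langle e^{i\theta}g(x),x^{\ast}\rangle\geq 0$ for every $x\in\mathcal{B}_{r}$ and $x^{\ast}\in J(x)$. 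Thus the whole theorem collapses to verifying one inequality.

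The second step is to match this inequality with \eqref{fi}. Put $h:=-g$, so that $h(0)=0$, $h'(0)=-I$ and $h$ is of the type treated in Theorem~\ref{hol}. The key bound established there (and recorded in Remark~\ref{Spiral}), which comes from Proposition~\ref{propN}, is
$$
\sup_{\|x\|=\rho}\Rea\langle e^{-i\theta}h(x),x^{\ast}\rangle\leq\frac{\rho^{2}}{1-\rho^{2}}\,\varphi(\rho),\qquad \varphi(\rho)=2\rho(1-\rho\cos\theta)-\cos\theta\,(1-\rho^{2}),
$$
valid for $|\theta|<\tfrac{\pi}{2}$. Since $\varphi$ is a downward parabola with $\varphi(0)=-\cos\theta<0$ whose smaller root is $r(\theta)=\frac{1-|\sin\theta|}{\cos\theta}$, one has $\varphi(\rho)\leq 0$ for every $\rho\in(0,r(\theta)]$; hence the displayed supremum is $\leq 0$ at every radius $\rho\leq r(\theta)$, so $\Rea\langle e^{-i\theta}h(x),x^{\ast}\rangle\leq 0$ throughout $\mathcal{B}_{r}$ whenever $r\leq r(\theta)$. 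Because $h=-g$, this is exactly $\Rea\langle e^{-i\theta}g(x),x^{\ast}\rangle\geq 0$ on $\mathcal{B}_{r}$; and since the admissible interval $(-\tfrac{\pi}{2},\tfrac{\pi}{2})$ and the radius $r(\theta)$ are even in $\theta$, replacing $\theta$ by $-\theta$ gives the required $\Rea\langle e^{i\theta}g(x),x^{\ast}\rangle\geq 0$ on $\mathcal{B}_{r}$ for all $r\leq r(\theta)$.

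Finally I would assemble the pieces: the inequality just obtained says precisely that $\tilde h=e^{i\theta}g$ is generating on $\mathcal{B}_{r}$, with $\tilde h(0)=0$ and $\tilde h'(0)=e^{i\theta}I=\mu I$, while $\mu f=e^{i\theta}f'g=f'\tilde h$; by \eqref{spir} this is the statement that $f$ is $e^{i\theta}$-spirallike on $\mathcal{B}_{r}$, completing the argument. The routine computation $\varphi(r)=0\Leftrightarrow r=r(\theta)$ and the sign of $\varphi$ on $(0,r(\theta)]$ are elementary and already appear in Theorem~\ref{hol}. The only point demanding real care -- the closest thing to an obstacle -- is the sign bookkeeping: the generator in the spirallike equation \eqref{spir} is the \emph{negative} of the dissipative field $h$ to which Theorem~\ref{hol} applies, and one must also track the conjugation switch $e^{-i\theta}\leftrightarrow e^{i\theta}$ correctly; once these are handled, the passage from the sphere to all of $\mathcal{B}_{r}$ is automatic, since \eqref{fi} is applied at every radius $\rho\leq r(\theta)$ rather than at a single one.
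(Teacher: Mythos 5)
Your argument is correct and is essentially the paper's own (one-line) proof with the details made explicit: the paper likewise reduces the claim to the non-negativity of $\Rea\langle e^{i\theta}g(x),x^{\ast}\rangle$ for the generator $g$ of the starlike $f$, and reads this off from the bound (\ref{fi}) in Theorem \ref{hol} together with the root $r(\theta)=\frac{1-|\sin\theta|}{\cos\theta}$ of $\varphi$ recorded in Remark \ref{Spiral}. Your careful tracking of the sign convention ($h=-g$) and of the $\theta\mapsto-\theta$ symmetry is exactly the bookkeeping the paper leaves implicit.
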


The following example suggested by one of the referees shows that
the estimates in Theorems \ref{star-radius} and \ref{Spiral2} are
sharp.

\begin{example}
Let $u\in X$ be a unit vector and let $M$ be a subspace of $X$
such that each $x\in X$ has a unique representation
$x=x_{1}u+\widehat{x}$, where $x_{1}\in \mathbb{C}$ and
$\widehat{x}\in M$. Consider the mappings
$f_{\theta}:\mathcal{B}\rightarrow X$,  defined by
$f_{\theta}(x)=\frac{1}{(1-x_{1})^{1+exp(2i\theta)}}x$, $\theta
\in [0,\frac{\pi}{2})$. It can be seen that $f_{0}$ is starlike on
$\mathcal{B}$ while $f_{\theta}$, $\theta \in (0,\frac{\pi}{2})$,
is $\mu$-spirallike on $\mathcal{B}$ with $\mu =e^{i\theta}$.
Calculations show that for these mappings the estimates given in
Theorems \ref{star-radius} and \ref{Spiral2} cannot be improved.
\end{example}

\text{}

\text{}

{\bf Acknowledgments.} We are very grateful to the anonymous referees
 for their many useful comments and helpful suggestions.

\text{}

\text{}

\bigskip

\bigskip

\bigskip

\text{}

\text{}

{\large Filippo Bracci, Dipartimento di Matematica, Universit\`{a}
di Roma ``Tor Vergata'', Via della Ricerca Scientifica 1, 00133
Roma, Italy}

\textit{E-mail address:} fbracci@mat.uniroma2.it

\bigskip

{\large Marina Levenshtein, Department of Mathematics, ORT Braude
College, 21982 Karmiel, Israel}

\textit{E-mail address:} marlev@braude.ac.il

\bigskip

{\large Simeon Reich, Department of Mathematics, The Technion -
Israel Institute of Technology, 32000 Haifa, Israel}

\textit{E-mail address:} sreich@tx.technion.ac.il

\bigskip

{\large David Shoikhet, Department of Mathematics, ORT Braude
College, 21982 Karmiel, Israel}

\textit{E-mail address:} davs@braude.ac.il

\end{document}